\newtheorem{theorem}{Theorem}[section]
\newtheorem{lm}[theorem]{Lemma}
\newtheorem{tr}[theorem]{Theorem}
\newtheorem{cor}[theorem]{Corollary}
\newtheorem{rem}[theorem]{Remark}
\newtheorem{pr}[theorem]{Proposition}
\begin{document}

\title{The center of $Dist(GL(m|n))$ in positive characteristic}
\thanks{This publication was made possible by a NPRF award NPRP 6 - 1059 - 1 - 208 from the Qatar National Research Fund 
(a member of The Qatar Foundation). The statements made herein are solely the responsibility of the authors.}
\author{Alexandr N. Zubkov}
\address{Omsk State Pedagogical University \\ Chair of Geometry \\ 644099 Omsk-99 \\ Tuhachev-skogo Embankment 14 \\ Russia}
\email{a.zubkov@yahoo.com}
\author{Franti\v sek Marko}
\address{Pennsylvania State University \\ 76 University Drive \\ Hazleton, PA 18202 \\ USA}
\email{fxm13@psu.edu}
\begin{abstract}
The purpose of this paper is to investigate central elements in distribution algebras $Dist(G)$ of general linear supergroups $G=GL(m|n)$. As an application, we compute explicitly the center of $Dist(GL(1|1))$ and its image under Harish-Chandra homomorphism.
\end{abstract}
\maketitle

\section*{Introduction}
The main motivation of this article was the paper \cite{hab} of Haboush who related the adjoint invariants to central elements of the distribution algebra of a semisimple simply connected algebraic group over the field of positive characteristic. Our main goal was to extend his results and method to the case of supergroups.

Let $G$ be an algebraic supergroup and $R$ be its normal infinitesimal subsupergroup. Then $Dist(R)$ has a right integral $\nu$ which is semi-invariant with respect to the adjoint action of $G$ on $Dist(R)$ corresponding to a character $\chi$. We show that the map $K[R]^{G ,-\chi} \to Dist(R)^G$ given by $r\mapsto \nu r$ is an isomorphism of superspaces.

Let $G_{ev}$ be the largest even subsupergroup of $G$ and $G_r$ be the $r$-th Frobenius kernel of $G$.
In the special case, when the above character is trivial, we have an isomorphism 
$K[G_r]^G \to Dist(G_r)^G$. For example, this is valid for the general linear, special linear and ortho-symplectic supergroups.

Additionally, if $Dist(G)$ is generated by $Dist(G_{ev})$ and some odd primitives, then $Dist(G)^G$ is the center of $Dist(G)$. Since $Dist(G)^G=\cup_{r\geq 1} Dist(G_r)^G$, this gives a description of the center of $Dist(G)$.

For $G=GL(m|n)$ and its Frobenius kernel $G_r$, we can extend the integral on $Dist(G_{ev,r})$ to an integral on $Dist(G_r)$ using the isomorphism of affine superschemes
$G_r\simeq V^-_r\times G_{ev,r}\times V^+_r$, where $V^-_r$ and $V^+_r$ are odd unipotent subsupergroups of $G_r$ corresponding to the lower odd block and upper odd block of $G_r$, respectively. We expect that this method will work for other supergroups.

In Section 1, we recall properties of Hopf superalgbras and their right and left integrals, and in Section 2 we specialize to finite-dimensional Hopf superalgebras.
In Sections 3 and 4 we recall basic definitions and properties of algebraic supergroups and their superalgebras of distributions.
Section 5 is devoted to auxiliary results about central elements and adjoint invariants for superalgebras $A$ and distribution superalgebras $Dist(G)$.
In Section 6, we follow the method of Haboush and describe central elements and integrals in $Dist(GL(m|n))$. Section 7 is devoted to Harish-Chandra homomorphism
from the center of $Dist(G)$ to the distribution superalgebra $Dist(T)$ of a maximal torus $T$ of $G$, and formulas for the conjugation action of odd unipotent subsupergroup $U_{ij}$ of $G$ on $K[G]$ and on $Dist(G)$. Such actions are completely determined by the superderivation 
$D_{ij}$.
In Section 8, we compute explicitly the invariants of $K[G_r]^G$ for $G=GL(1|1)$ and the center of $Dist(G)$.
We finish in section 9 with a discussion of Kujawa and Harish-Chandra blocks and their relationship to blocks of the supergroup $G$.

\section{Hopf superalgebras}

A vector superspace $V$ is a $\mathbb{Z}_2$-graded vector space, say $V=V_0\bigoplus V_1$. If $v\in V_i$ for $i=0, 1,$ then
$i$ is called a {\it parity} of $v$ and it is denoted by $|v|$.
The vector superspaces form 
a symmetric tensor category with respect to the (super)symmetry
$$t_V : v\otimes w\mapsto (-1)^{|v||w|}w\otimes v \mbox{ for } v, w\in V .$$
If we admit ungraded morphisms between vector superspaces, then this category is not abelian, but its {\it underlying even category},  consisting of the same objects but only even morphisms, is. For any superspace $V$ we define 
a {\it parity shift} $\Pi V$ of $V$ by $(\Pi V)_i=V_{i+1}$ for $i\in\mathbb{Z}_2$. The functor $V\to \Pi V$ is an auto-equivalence of the category of vector superspaces.

Algebraic systems, such as algebra or Hopf (or Lie)
algebra, defined in the tensor category of vector superspaces, 
will be labelled with the prefix "super." For example, a {\it superalgebra} is just associative $\mathbb{Z}_2$-graded algebra. However, the tensor product $A\otimes B$ of two superalgebras involves the supersymmetry; the product in $A\otimes B$ is defined as 
$$(a\otimes b)(c\otimes d)=(-1)^{|b||c|}ac\otimes bd \mbox{ for } a, c\in A \mbox{ and } b, d\in B.$$
A superalgebra $A$ is said to be {\it supercommutative}, if it satisfies the identity
$$ab=(-1)^{|a||b|}ba \mbox{ for each } a, b\in A.$$
The categories of left/right $A$-supermodules with ungraded morphisms are denoted by ${}_{A}\mathsf{SMod}$ and
$\mathsf{SMod}_A$ respectively. Their underlying even categories are denoted by ${}_{A}\mathsf{Smod}$ and
$\mathsf{Smod}_A$ respectively.

If $A$ is a {\it supercoalgebra}, then the coproduct $\Delta_A :
A\to A\otimes A$ is required to be an even morphism. Furthermore, if $V$ is a left {\it supercomodule} over $A$, then its comodule
map $\tau_V : V\to A\otimes V$ is required to be even morphism as well. 
The categories of left/right $A$-supercomodules with ungraded morphisms are denoted by ${}^{A}\mathsf{SMod}$ and
$\mathsf{SMod}^A$ respectively. 
Their underlying even categories are denoted by ${}^{A}\mathsf{Smod}$ and $\mathsf{Smod}^A$ respectively. 

If $A$ is a Hopf superalgebra, then the coproduct $\Delta_A$, the antipode $s_A$ and the counit $\epsilon_A$ are superalgebra morphisms.  
For a subset $V\subset A$, we denote by $V^+=V\cap\ker\epsilon_A$.
A Hopf superalgebra $A$ has two right $A$-supercomodule structures, denoted by $A_r$ and $A_l$, given by $\tau_{A_r}=\Delta_A$ and $\tau_{A_l}=t_A(s_A\otimes id_A)\Delta_A$ respectively. 
The antipode $s_A$ maps $A_r$ isomorphically to $A_l$.

If $V\in {}^{A}\mathsf{SMod}$, then one can define the {\it invariant subsuperspace} as ${}^{A}V=\{v\in V|\tau_V(v)=1\otimes v\}$. Symmetrically, if $V\in \mathsf{SMod}^A$, then $V^A=\{v\in V| \tau_V(v)=v\otimes 1\}$.

Let $V$ be a finite-dimensional right $A$-supercomodule (the case of finite-dimen-sional left $A$-supercomodule is analogous). Then the dual space $V^*$ has a right $A$-supercomodule structure given by 
$\tau_{V^*}(\phi)=\sum\phi_1\otimes a_2$, where 
\[\sum(-1)^{|a_2||v_1|}\phi_1(v_1)a_2 a'_2=\phi(v)\]
for every $v\in V$ and $\tau_V(v)=\sum v_1\otimes a'_2$. More precisely, 
choose a homogeneous basis consisting of elements $v_i$ for $1\leq i\leq t$.  If $\tau_V(v_i)=\sum_{1\leq k\leq t} v_k\otimes a_{ki}$ for $1\leq i\leq t$, then $A$-supercomodule $V^*$ has a (dual) basis $v^*_i$ such that
$$\tau_{V^*}(v^*_i)=\sum_{1\leq k\leq t} v^*_k \otimes (-1)^{|v_k|(|v_i|+|v_k|)}s_A(a_{ik}).$$ 
The functor $V\to V^*$ is an anti-equivalence of the full subcategory of $\mathsf{SMod}^A$ that consists of all finite-dimensional $A$-supercomodules. We will use the notation $\tau^*_V$ for $\tau_{V^*}$.

Assume that $V$ is a right $A$-supercomodule and a right $A$-supermodule. Then $V$ is called a right {\it Hopf supermodule} if for all $v\in V$ and $a'\in A$
we define
\[\tau_V(va')=\sum (-1)^{|a_2||a'_1|}v_1a'_1\otimes a_2 a'_2,\] 
where $\tau_V(v)=\sum v_1\otimes a_2$ and $\Delta_A(a')=\sum a'_1\otimes a'_2$. A left $A$-Hopf supermodule is defined analogously. 

If $V$ is a left or right $A$-Hopf supermodule, then so is $\Pi V$.

Typical examples of right $A$-Hopf supermodules are $A_l$ and $A_r$, where the right $A$-supermodule structure of $A_r$ is natural but
the right $A$-supermodule structure of $A_l$ is modified as $a\star b=as_A(b)$ for $a, b\in A$.
If $A$ is finite-dimensional, then both $A_r^*$ and $A_l^*$ are right $A$-Hopf supermodules with respect to the actions $(x a)(a')=x(aa')$ 
and $(x\star a)(a')=x(s_A(a)a')$, for $x\in A^*$ and $a, a'\in A$. 
\begin{lm}\label{larsentheorem}
If $V$ is a right $A$-Hopf supermodule, then there is an isomorphism of $A$-Hopf supermodules $V^A\otimes A\to V$ given by
$v\otimes a\mapsto va$ for $v\in V^A$ and $a\in A$. In particular, every right $A$-Hopf supermodule is isomorphic to
$A_r^{\oplus\dim V^A_0}\oplus (\Pi A_r)^{\oplus\dim V^A_1}$. Both statements are valid also for left $A$-Hopf supermodules.
\end{lm}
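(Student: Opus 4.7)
My plan is to mimic the classical Larson--Sweedler fundamental theorem of Hopf modules, adapted to the super setting. The key is to construct an even projection $P\colon V\to V^A$ and then use it to invert the multiplication map $\Psi\colon V^A\otimes A\to V$, $v\otimes a\mapsto va$.

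For $v\in V$ with coaction $\tau_V(v)=\sum v_1\otimes a_2$ in the paper's notation, I would define $P(v)=\sum v_1\cdot s_A(a_2)$. The map $P$ is even since $\tau_V$ and $s_A$ are even. To see that $P(v)\in V^A$, apply the Hopf supermodule axiom to the product $v_1\cdot s_A(a_2)$ (treating $v_1$ as the module element and $s_A(a_2)$ as the algebra element), and then use coassociativity, the super identity $\Delta_A\circ s_A=t_A\circ(s_A\otimes s_A)\circ\Delta_A$, and the antipode axiom $\sum s_A(a_1)a_2=\epsilon_A(a)1$ to collapse the expression to $1\otimes P(v)$. For $w\in V^A$ one has $\tau_V(w)=w\otimes 1$, so $P(w)=w$, and hence $P|_{V^A}=\mathrm{id}$.

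The inverse of $\Psi$ is then $\Phi\colon V\to V^A\otimes A$, $\Phi(v)=\sum P(v_1)\otimes a_2$. Both compositions reduce to standard antipode calculations. On one hand, $\Psi(\Phi(v))=\sum v_1\, s_A(v_2)\, v_3=v$ after expanding $P(v_1)$ via coassociativity. On the other hand, for $w\in V^A$ the Hopf supermodule axiom gives $\tau_V(wa)=\sum wa_1\otimes a_2$, so $\Phi(wa)=\sum P(wa_1)\otimes a_2=\sum w\,\epsilon_A(a_1)\otimes a_2=w\otimes a$ using the counit axiom. Both $\Psi$ and $\Phi$ are readily checked to be right $A$-Hopf supermodule morphisms when $V^A\otimes A$ carries the structure induced from the right regular action and coaction on the second tensor factor.

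For the explicit decomposition, choose a homogeneous basis of $V^A$ consisting of $\dim V^A_0$ even vectors and $\dim V^A_1$ odd vectors. Each even basis vector $e$ spans a Hopf sub-supermodule $eA\cong A_r$, while each odd basis vector $f$ spans $fA\cong \Pi A_r$ via the parity-reversing isomorphism $fa\mapsto a$. The left Hopf supermodule statement follows by a symmetric argument, or by transporting through $s_A$, which exchanges $A_r$ with $A_l$. The main obstacle I anticipate is Koszul-sign bookkeeping: the Hopf supermodule axiom and $\Delta_A\circ s_A$ each introduce sign twists, and intermediate signs accumulate whenever they are combined; once these are handled, the proof reduces to the classical ungraded Hopf module calculation.
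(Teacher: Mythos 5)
Your proposal is correct and is exactly the route the paper intends: the paper's proof consists of a single sentence deferring to Theorem 4.1.1 of Sweedler (the fundamental theorem of Hopf modules), and your projection $P(v)=\sum v_1 s_A(a_2)$ with inverse $\Phi(v)=\sum P(v_1)\otimes a_2$ is precisely the super-adaptation of that argument, with the Koszul signs entering only through the Hopf supermodule axiom and $\Delta_A\circ s_A=t_A\circ(s_A\otimes s_A)\circ\Delta_A$ as you note. No gaps.
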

\begin{proof}
The proof can be modified from Theorem 4.1.1 of \cite{sw}.
\end{proof}

Define the subsuperspaces of {\it right} and {\it left integrals} on $A$ by
\[\int_{r, A}=\{\nu\in A | \nu a=\epsilon_{A}(a)\nu \mbox{ for all } a\in A\} \]
and
\[\int_{l, A}=\{\nu\in A | a\nu=\epsilon_{A}(a)\nu \mbox{ for all } a\in A\}.\]

Define the left {\it adjoint} action of $A$ on itself by
\[{\bf ad}(x)y=\sum (-1)^{|x_2||y|}x_1y s_A(x_2),\]
for $x, y\in A$. Symmetrically, define the right {\it adjoint}
action by 
\[y{\bf ad}(x)=\sum (-1)^{|y||x_1|}s_A(x_1)yx_2.\]
A Hopf supersubalgebra $B\subseteq A$ is called {\it left normal}
(or {\it right normal}, respectively) if ${\bf ad}(x)y\in B$ for all $x\in A, y\in B$ (or $y{\bf ad}(x)\in B$ for all $x\in A, y\in B$, respectively). If $A$ is supercocommutative, then ${\bf ad}(s_A(x))y=(-1)^{|y||x|}y{\bf ad}(x)$. In particular, if $s_A$ is a bijection, then the left normality of $B$ in $A$ is equivalent to the right normality of $B$ in $A$.
\begin{lm}\label{integralsareinvariant}
Assume that $A$ is a supercocommutative Hopf superalgebra with bijective antipode and $B$ is normal in $A$. Then the supersubspaces 
$\int_{l,B}$ and $\int_{r, B}$ are invariant with respect to both adjoint actions.  
\end{lm}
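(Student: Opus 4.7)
The plan is to verify ad-invariance directly for $\int_{r,B}$ under both the left and right adjoint actions, and then transfer the conclusion to $\int_{l,B}$ using the supercocommutativity hypothesis. The main tool is the ``smash-product'' identity
\[
yb \;=\; \sum (-1)^{|b||y_{2}|}\bigl({\bf ad}(y_{1})b\bigr)\,y_{2}, \qquad y,b \in A,
\]
(with $\Delta_A(y) = \sum y_1 \otimes y_2$), which holds in any Hopf superalgebra as a direct consequence of coassociativity together with $\sum s_A(y_1) y_2 = \epsilon_A(y)\cdot 1$.

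To show ${\bf ad}(x)\nu \in \int_{r,B}$ for $\nu \in \int_{r,B}$ and $x \in A$, I would start from
\[
({\bf ad}(x)\nu)\,b \;=\; \sum (-1)^{|x_2||\nu|}\,x_1\,\nu\,s_A(x_2)\,b
\]
and apply the smash-product identity to the factor $s_A(x_2)\,b$ on the right. Unfolding $\Delta_A(s_A(x_2))$ via $\Delta_A s_A = (s_A\otimes s_A)\,t_A\,\Delta_A$ produces terms in which $\nu$ is multiplied on the right by an element of the form ${\bf ad}(s_A(u))b$ with $u$ coming from $\Delta_A(x_2)$. Normality of $B$ places ${\bf ad}(s_A(u))b$ inside $B$, so the right-integral property of $\nu$ collapses this product to $\epsilon_A(u)\,\epsilon_A(b)\,\nu$ (using $\epsilon_A\circ s_A = \epsilon_A$ and $\epsilon_A({\bf ad}(z)c) = \epsilon_A(z)\epsilon_A(c)$). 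The factor $\epsilon_A(u)$ is then absorbed via the counit axiom $(\mathrm{id}\otimes\epsilon_A)\Delta_A = \mathrm{id}$, which telescopes the inner coproduct back into the original $x_2$, leaving precisely $\epsilon_A(b)\,{\bf ad}(x)\nu$ as required. The same strategy applied to the product $x_2\,b$ inside $\nu\,{\bf ad}(x) = \sum(-1)^{|\nu||x_1|}\,s_A(x_1)\,\nu\,x_2$ shows that $\int_{r,B}$ is stable under the right adjoint action as well.

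For $\int_{l,B}$ I would combine two observations. First, applying $s_A$ to the defining equation $\nu a = \epsilon_A(a)\nu$ immediately yields $s_A(a)s_A(\nu) = \epsilon_A(a)s_A(\nu)$, so $s_A$ restricts to a bijection $\int_{r,B}\to\int_{l,B}$. Second, since $A$ is supercocommutative, a short convolution-uniqueness argument gives $s_A^2 = \mathrm{id}_A$, and a further Sweedler calculation yields the intertwiner $s_A({\bf ad}(x)y) = (-1)^{|x||y|}\,s_A(y)\,{\bf ad}(s_A(x))$. Combining these with the already-established invariance of $\int_{r,B}$ shows that $\int_{l,B}$ is stable under the right adjoint action, and the invariance under the left adjoint action then follows from the identity ${\bf ad}(s_A(x))y = (-1)^{|x||y|}\,y\,{\bf ad}(x)$ recalled in the excerpt.

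The main technical obstacle is bookkeeping the super signs and nested Sweedler indices generated when the smash-product identity is iterated inside an already-expanded adjoint action; in the end these signs collapse because $\epsilon_A$ vanishes on odd elements, forcing the relevant parities in the surviving terms of the sum to be zero.
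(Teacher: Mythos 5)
Your argument is correct and is essentially the computation the paper itself performs: both proofs reduce $(\mathbf{ad}(x)\nu)b$ to a term in which $\nu$ is multiplied on the right by an adjoint transform of $b$ (your smash-product identity applied to $s_A(x_2)b$ is the same regrouping the paper achieves by inserting $x_3 s_A(x_4)=\epsilon_A(x_3)1$), after which normality and the right-integral property collapse everything to $\epsilon_A(b)\,\mathbf{ad}(x)\nu$. Your explicit transfer to $\int_{l,B}$ via $s_A$, $s_A^2=\mathrm{id}$, and the intertwining identity is a clean way of making precise the "remaining cases are similar" that the paper leaves to the reader.
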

\begin{proof}
Let us consider $y\in\int_{r, B}$. For every $x\in A$ and $z\in B$ we obtain
\[
\begin{aligned}
({\bf ad}(x)y)z=&\sum (-1)^{(|x_2|+|x_3|+|x_4|)|y|+(|x_3|+|x_4|)|z|}
x_1ys_A(x_2)zx_3 s_A(x_4)\\
=&\sum (-1)^{(|x_2|+|x_3|)(|y|+|z|)}x_1 (y z{\bf ad}(x_2))s_A(x_3)\\
=&\sum (-1)^{(|x_2|+|x_3|)(|y|+|z|)}x_1\epsilon_A(z{\bf ad}(x_2))y s_A(x_3)=\epsilon_A(z){\bf ad}(x)y.
\end{aligned}\]
The proof of the remaining cases is similar.
\end{proof}

\section{Finite-dimensional Hopf superalgebras}

In what follows, we will use the definitions and notations from \cite{maszub, zub1}. Let $R$ be a finite-dimensional Hopf superalgebra. The dual superspace $R^*$ has a natural structure of a Hopf superalgebra. The superalgebra structure on $R^*$ is given by
\[(\phi\psi)(r)=\sum (-1)^{|\psi||r_1|}\phi(r_1)\psi(r_2),\]
where $\phi, \psi\in R^*$ and $r\in R.$ 
The comultiplication is given by $\Delta_{R^*}(\phi)=\sum\phi_1\otimes\phi_2$, where 
$\phi(rs)=\sum(-1)^{|\phi_2||r|}\phi_1(r)\phi_2(s)$ for arbitrary $r, s\in R$. 
The antipode and counit are given as $s_{R^*}(\phi)(r)=\phi(s_R(r))$ and $\epsilon_{R^*}(\phi)=\phi(1_R)$.
\begin{lm}\label{self-duality}
The functor $R\to R^*$ is a self-duality on the category of finite-dimensional Hopf superalgebras.  
\end{lm}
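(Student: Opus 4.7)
The plan is to verify the three standard ingredients of a self-duality: that $R^*$ is itself a finite-dimensional Hopf superalgebra, that dualization defines a contravariant functor, and that the evaluation map gives a natural isomorphism $R\cong R^{**}$ of Hopf superalgebras.

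First, I would check that the formulas given just before the lemma really endow $R^*$ with the structure of a Hopf superalgebra. Finite-dimensionality is used here to identify $(R\otimes R)^*\cong R^*\otimes R^*$, so that the coproduct $\Delta_{R^*}$ lands in $R^*\otimes R^*$. Each axiom (associativity of the product on $R^*$, coassociativity, compatibility of $\Delta_{R^*}$ with multiplication, the antipode identity, and unit/counit compatibilities) is verified by pairing against arbitrary elements of $R$ or $R\otimes R$ and transporting it to the corresponding axiom for $R$. The Koszul signs that appear in the formulas on $R^*$ are precisely what is required to make the supersymmetry cooperate with the transpose.

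Second, I would define the action of the dualization functor on morphisms: given a Hopf superalgebra morphism $f\colon R\to S$, set $f^*\colon S^*\to R^*$ by $f^*(\phi)=\phi\circ f$. The fact that $f$ respects multiplication, comultiplication, unit, counit and antipode translates directly into the statement that $f^*$ respects the dual structures (the relevant signs cancel because $f$ is even). Contravariant functoriality, $(g\circ f)^*=f^*\circ g^*$ and $\mathrm{id}_R^*=\mathrm{id}_{R^*}$, is immediate.

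Third, I would construct the canonical map $\eta_R\colon R\to R^{**}$ by
\[\eta_R(r)(\phi)=(-1)^{|r||\phi|}\phi(r),\]
which is a bijection of the underlying superspaces since $R$ is finite-dimensional. The sign is dictated by the supersymmetry and is exactly what is needed in order for $\eta_R$ to respect the multiplication, comultiplication, counit and antipode: in each case one pairs $\eta_R(rs)$, or $(\eta_R\otimes\eta_R)\Delta_R(r)$, etc., against elements of $R^*$ (or $R^*\otimes R^*$) and checks that after applying the formulas for the $R^*$-structure the Koszul signs match. Naturality in $R$ is immediate from the definitions.

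The only real obstacle is the sign-bookkeeping: conceptually everything is the classical self-duality of finite-dimensional Hopf algebras, but one must verify that the signs in the definitions of the product, coproduct and antipode on $R^*$ and in the evaluation map $\eta_R$ are mutually consistent so that $\eta_R$ is an isomorphism of Hopf superalgebras rather than just of superspaces. Once that is done, the pair $(R\mapsto R^*,\eta)$ is the required self-duality.
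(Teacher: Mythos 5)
Your proposal is correct and matches the paper's approach: the paper simply declares the statement a ``routine verification analogous to I.8(1) of \cite{jan}'', and your outline (checking the Hopf superalgebra axioms on $R^*$, contravariant functoriality, and the natural isomorphism $R\cong R^{**}$ with the appropriate Koszul signs) is exactly that routine verification written out.
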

\begin{proof}
Routine verification analogous to I.8(1) of \cite{jan}.
\end{proof}
If $M$ is a left $R$-supermodule, then $M$ is a right $R^*$-supercomodule via the linear map
$M\to Hom_K(R, M)\simeq M\otimes R^*$ which maps $m\in M$ to $l_m : R\to M$ given by $l_m(r)=(-1)^{|m||r|}rm$ for $r\in R$.
On the other hand, if $M$ is a right $R$-supercomodule, then $M$ is a left $R^*$-supermodule by 
$\phi m=\sum(-1)^{|\phi||m_1|}\phi(r_2)m_1,$ where $\tau_M(m)=\sum m_1\otimes r_2,$ for $\phi\in R^*$ and $m\in M$.
If we replace left action by right action and right coaction by left coaction in the above statements, they will remain valid.

The proof of the following lemma is easy and we leave it to the reader.
\begin{lm}\label{equivalence}
There is an equivalence of categories ${}_{R}\mathsf{SMod}$ of left $R$-supermodules and $\mathsf{SMod}^{R^*}$ of right $R^*$-supercomodules.
Analogously, there is an equivalence of categories $\mathsf{SMod}_R$ of right $R$-supermodules and ${}^{R^*}\mathsf{SMod}$ of left $R^*$-supercomodules.
\end{lm}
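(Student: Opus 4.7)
The strategy is to use the two constructions from the paragraph preceding the lemma as the two functors of a quasi-inverse pair. Define $F\colon{}_R\mathsf{SMod}\to\mathsf{SMod}^{R^*}$ on objects by sending a left $R$-supermodule $M$ to $M$ equipped with the coaction $\tau_M\colon M\to M\otimes R^*$ induced by $m\mapsto l_m$ under the canonical super-isomorphism $Hom_K(R,M)\cong M\otimes R^*$; in a homogeneous basis $\{r_i\}$ of $R$ with dual basis $\{r_i^*\}$ this reads $\tau_M(m)=\sum_i(-1)^{|m||r_i|}\,r_im\otimes r_i^*$. Define $G\colon\mathsf{SMod}^{R^*}\to{}_R\mathsf{SMod}$ by applying the second construction with $R^*$ in place of $R$, so that a right $R^*$-supercomodule becomes a left $R^{**}$-supermodule, and then pulling back along the canonical isomorphism $R\cong R^{**}$ provided by Lemma~\ref{self-duality}. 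Explicitly, for a right $R^*$-supercomodule $N$ with $\tau_N(n)=\sum n_1\otimes\phi_2$ and $r\in R$, one sets $r\cdot n=\sum(-1)^{|r||n_1|}\phi_2(r)n_1$. On morphisms, both $F$ and $G$ act trivially on the underlying $K$-linear map.

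Next, I would verify that these are well-defined functors. For $F(M)$, coassociativity of $\tau_M$ is the transpose of associativity of the $R$-action under the duality between multiplication $R\otimes R\to R$ and the comultiplication of $R^*$, and the counit axiom for $\tau_M$ corresponds to $1_R\cdot m=m$ via $\epsilon_{R^*}(\phi)=\phi(1_R)$. For $G(N)$, associativity and unitality of the new $R$-action fall out symmetrically from coassociativity and counit of the $R^*$-coaction, read through $R\cong R^{**}$. In each direction a $K$-linear map is $R$-linear if and only if it is $R^*$-colinear, so $F$ and $G$ extend naturally to morphisms.

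Finally, I would show $GF\cong\mathrm{id}$ and $FG\cong\mathrm{id}$. Substituting the explicit formula for $\tau_{F(M)}$ into the definition of the recovered $R$-action and then collecting the Koszul signs, using that $r_i^*(r)=0$ unless $|r_i|=|r|$ together with the completeness relation $\sum_i r_i^*(r)r_i=r$, yields the original product $r\cdot m$; the symmetric computation handles $FG$. The analogous equivalence between $\mathsf{SMod}_R$ and ${}^{R^*}\mathsf{SMod}$ is obtained by interchanging left and right throughout, using the right version of the canonical isomorphism and the right normal form of the second construction.

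The main obstacle is consistent sign bookkeeping across three independent sources of Koszul signs: the canonical super-isomorphism $Hom_K(R,M)\cong M\otimes R^*$, the formulas for the multiplication, comultiplication and antipode of $R^*$ recalled at the start of this section, and the double-dual pairing $R\otimes R^{**}\to K$. Once a coherent convention is fixed, the remainder is a direct super-analogue of the classical equivalence between finite-dimensional modules over a finite-dimensional algebra and comodules over its dual coalgebra, justifying the paper's remark that the verification is routine.
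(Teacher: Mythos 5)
Your proposal is correct and follows exactly the route the paper intends: it declares the proof routine and leaves it to the reader, with the two constructions in the preceding paragraph serving as the mutually quasi-inverse functors, which is precisely what you flesh out (including the double-dual identification from Lemma~\ref{self-duality} and the sign bookkeeping). Nothing further is needed.
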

As a consequence, the categories $\mathsf{SMod}^R$ and ${}^{R}\mathsf{SMod}$ have enough projective objects.

Lemma \ref{larsentheorem} implies that $(R^*_s)^R$ is a one-dimensional even or odd superspace for  $s\in\{r, l\}$.
It is easy to see that elements from $(R^*_r)^R$ are right integrals. In other words, we have 
\[\int_{r, R^*}=(R^*_r)^R=\{\nu\in R^* | \nu x=\epsilon_{R^*}(x)\nu=x(1)\nu \mbox{ for all } x\in R^*\}.\]
For example, $\nu\in (R^*_r)^R$ if and only if $\sum_{|h_1|=|\nu|}\nu(h_1)h_2=\nu(h)$ for all $h\in R$. The latter is equivalent to
\[\nu\mu(h)=\sum_{|h_1|=|\nu|}\nu(h_1)\mu(h_2)=\mu(1)\nu(h)\]
for every $\mu\in R^*$. 
\begin{cor}\label{rightisom}
If $\nu\in\int_{r, R^*}\setminus 0$, then $r_{\nu} : \Pi^{|\nu|}R_r\to R_r^*$ given by $h\mapsto \nu h$ is an isomorphism of
right $R$-Hopf supermodules. 
\end{cor}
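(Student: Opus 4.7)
The plan is to deduce the corollary directly from Lemma~\ref{larsentheorem} applied to the right $R$-Hopf supermodule $V=R^*_r$, and then simply read off the explicit form of the Hopf-supermodule isomorphism it supplies.

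First, I would recall from the paragraph immediately preceding the corollary that $V^R=(R^*_r)^R$ coincides with $\int_{r,R^*}$, and that Lemma~\ref{larsentheorem} forces this space to be one-dimensional with pure parity. Since $\nu\in\int_{r,R^*}$ is nonzero, it spans $V^R$, and $V^R$ sits entirely in degree $|\nu|$. In the notation of Lemma~\ref{larsentheorem} this means $\dim V^R_{|\nu|}=1$ and $\dim V^R_{1-|\nu|}=0$.

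Second, I would invoke the ``in particular'' part of Lemma~\ref{larsentheorem}, which then yields an isomorphism $R^*_r\cong \Pi^{|\nu|}R_r$ of right $R$-Hopf supermodules. The explicit isomorphism coming from the first part of that lemma is $v\otimes a\mapsto va$; after identifying the one-dimensional superspace $V^R$ with its generator $\nu$, this is precisely the map $a\mapsto \nu a$, i.e.\ $r_\nu$. Equivariance for both the right $R$-action and the right $R$-coaction is already contained in the statement of Lemma~\ref{larsentheorem}, so no further verification is required.

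There is essentially no obstacle here: the corollary is a direct specialization of the structure theorem for Hopf supermodules. The only point requiring care is cosmetic — tracking that the generator $\nu$ of $V^R$ has parity $|\nu|$, so that the one-dimensional free Hopf supermodule on it is $\Pi^{|\nu|}R_r$ rather than $R_r$. This bookkeeping is immediate from the parity-shift clause in Lemma~\ref{larsentheorem}.
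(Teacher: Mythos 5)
Your argument is correct and is exactly the (implicit) proof the paper intends: the corollary is stated as an immediate consequence of Lemma~\ref{larsentheorem} applied to $V=R^*_r$, using that $V^R=\int_{r,R^*}$ is one-dimensional of parity $|\nu|$ and spanned by $\nu$, so the isomorphism $v\otimes a\mapsto va$ specializes to $r_\nu$. No gaps.
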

\begin{rem}\label{otherisom}
Consider the diagram 
\[\begin{array}{ccc}
\Pi^{|\nu|}R_r & \stackrel{\sim}{\to} & R^*_r \\
\uparrow & & \downarrow \\
\Pi^{|\nu|}R_l & & R_l^*
\end{array},
\] 
where the vertical arrows are supercomodule isomorphisms given by $h\mapsto s_R(h), x\mapsto s_{R^*}(x)=x s_R$ for $h\in R, x\in R^*$, and the upper
arrow is the isomorphism from Corollary \ref{rightisom}. The composition of these morphisms gives a Hopf supermodule isomorphism $l_{\mu} : \Pi^{|\nu|}R_l\to R_l^*$ 
given by $h\mapsto \mu h$, where $\mu=s_{R^*}(\nu)\in (R^*_l)^R$. 
\end{rem}

Working on the left, one can show that $\int_{R^*, l}$ is a one-dimensional even or odd superspace. 
Lemma \ref{self-duality} implies that every finite-dimensional Hopf superalgebra has non-zero left and right integrals.

\section{Algebraic supergroups}

Let $\mathsf{SAlg}_K$ denote the category of supercommutative $K$-superalgebras with even morphisms. Let $B$ be a supercommutative Hopf superalgebra. Then the representable functor $A\to Hom_{\mathsf{SAlg}_K}(B, A)$ from $\mathsf{SAlg}_K$ to the category of sets is a natural group 
functor. It is denoted by $G=SSp \ B$ and called an {\it affine supergroup}. If $B$ is finitely generated, then $G$ is called
an {\it algebraic supergroup}. Any (closed) subsupergroup $H$ of $G$ is uniquely defined by a Hopf superideal $I_H$
such that an element $g\in G(A)$ belongs to $H(A)$ if and only if $g(I_H)=0$. For example, the {\it largest even subsupergroup} 
$G_{ev}$ of $G$ is defined by the ideal $BB_1$. 

Let $W$ be a finite-dimensional superspace. The group functor $A\to \mathsf{End}_A(W\otimes A)_{0}^*$ is an algebraic supergroup. It is called a {\it general linear supergroup} and denoted by $GL(W)$. If $\dim W_0=m$ and $\dim W_1=n$, then $GL(W)$ is also denoted by $GL(m|n)$. 
Fix a homogeneous basis of $W$, say $w_i$ for $1\leq i\leq m+n$, where $|w_i|=0$ for $1\leq i\leq m$, and
$|w_i|=1$ for $m+1\leq i \leq m+n$.

A generic matrix $C=(c_{ij})_{1\leq i, j\leq m+n}$, where $|c_{ij}|=|w_i|+|w_j|$, has a block form 
$$\left(\begin{array}{cc}
C_{00} & C_{01} \\
C_{10} & C_{11}
\end{array}\right)
$$
with even $m\times m$ and $n\times n$ blocks $C_{00}$ and $C_{11}$, and odd $m\times n$ and $n\times m$ blocks
$C_{01}$ and $C_{10}$ respectively. 
Then $K[GL(m|n)]=K[c_{ij}|1\leq i, j\leq m+n]_{d}$, where $d=\det(C_{00})\det(C_{11})$,  
$$\Delta_{GL(m|n)}(c_{ij})=\sum_{1\leq k\leq m+n} c_{ik}\otimes c_{kj} \mbox{ and } \ \epsilon_{GL(m|n)}(c_{ij})=\delta_{ij}.$$

By the definition, the category of left/right $G$-supermodules, denoted by $G-\mathsf{SMod}$ and $\mathsf{SMod}-G$ respectively, coincides with the category of right/left $K[G]$-supercomodules. 
Their even underlying categories with the same objects, but only graded (even) homomorphisms, are denoted by $G-\mathsf{Smod}$ and $\mathsf{Smod}-G$ respectively. 

For example, the right supercomodule structure of a $GL(W)$-supermodule $W$ is defined by
$$\tau_W(w_i)=\sum_{1\leq j\leq m+n} w_j\otimes c_{ji}.$$

There is a one-to-one correspondence between $G$-supermodule structures on a finite-dimensional superspace $V$ and {\it linear representations}
$G\to GL(V)$ (cf. \cite{jan, zub1}). If $\tau_V(v)=\sum v_1\otimes f_2\in V\otimes K[G]$, then $g\in G(A)$ acts on $V\otimes A$ by even $A$-linear automorphism $g(v\otimes a)=\sum v_1\otimes g(f_2)a$. 
In other words, $V$ is a $G$-supermodule if and only if the group functor $G$ acts on the functor $V_a=V\otimes ?$ in such a way that, for every $A\in\mathsf{SAlg}_K$, the group $G(A)$ acts on $V_a(A)=V\otimes A$ by even $A$-linear automorphisms. 
If $V$ is finite-dimensional, then for any $A\in\mathsf{SAlg}_K$ one can define a pairing
$(V^*\otimes A)\times (V\otimes A)\to A$ such that
\[<\phi\otimes a , v\otimes b>=(\phi\otimes a)(v\otimes b)=(-1)^{|a||v|}\phi(v)ab, \phi\in V^* \mbox{ for } v\in V \mbox{ and } a, b\in A.
\]
Then for every $g\in G(A)$ we have
$$(g(\phi\otimes a))(v\otimes b)=(\phi\otimes a)(g^{-1}(v\otimes b)).
$$
Observe also that $v$ belongs to $V^G$ if and only if $g(v\otimes 1)=v\otimes 1$ for every $g\in G(A)$ and $A\in\mathsf{SAlg}_K$.

The superalgebra $K[G]$ is a left $G$-supermodule with respect to the {\it conjugation} action. The corresponding right supercomodule structure  
is given by \[\mathfrak{con} : f\mapsto \sum (-1)^{|f_1||f_2|}f_2\otimes s_H(f_1) f_3\] that is induced by the right action of $G$ on itself by conjugations (cf. \cite{zub1}). Thus $K[G]$ is a left $G$-supermodule such that an element
$g\in G(C)$ acts on $K[G]\otimes C$ by the rule
\[g(f\otimes c)=\sum (-1)^{|f_1||f_2|}f_2\otimes g(s_H(f_1) f_3)c,\]
where $f\in K[G], c\in C$ and $C\in \mathsf{SAlg}_K.$
Since $\mathfrak{con}$ is a superalgebra homomorphism, this $C$-linear map corresponding to $g$ is a superalgebra automorphism on $K[G]\otimes C$.

Recall that $H$ is a normal subsupergroup of $G$
if and only if $\mathfrak{con}(I_H)\subseteq I_H\otimes K[G]$ (cf. p. 731 of \cite{zub1}). Therefore, if $H$ is a normal subgroup of $G$, then the map $\mathfrak{con}$ induces a $G$-supermodule structure on $K[H]$. 

As in the purely even case, there is an obvious bijection between one-dimensional $G$-supermodules and group-like elements of $K[G]$.
The latter ones form a subgroup $X(G)$ of the group $K[G]^{\times}$ consisting of all invertible elements of $K[G]$. We call $X(G)$ a {\it character group} of $G$. 

For example, if $G=GL(m|n)$, then the element $$Ber(C)=\det(C_{00}-C_{01}C_{11}^{-1}C_{10})\det(C_{11})^{-1}$$ is 
a group-like element of the Hopf superalgebra $K[GL(m|n)]$ (cf. \cite{ber}), called {\it Berezinian}.
One can also show that
$K[GL(m|n)]=K[c_{ij}|1\leq i, j\leq m+n]_{Ber(C)}$.

Let $B^+$ denote the Borel subsupergroup of $G$ that consists of all upper triangular matrices. The opposite Borel subsupergroup is denoted by $B^-$ and it consists of all lower triangular matrices. Their largest unipotent subsupergroups are denoted by
$U^+$ and $U^-$ respectively. They are unipotent radicals of $B^+$ and $B^-$ respectively (see \cite{amas2, zub3, zub4} for the definition of the unipotent radical in the category of supergroups). 

Let $T$ be a maximal torus in $G$ consisting of diagonal matrices. The group $X(T)$ can be naturally identified with the (additive) group $\mathbb{Z}^{m+n}$ in such a way that
$\lambda=(\lambda_1 ,\ldots , \lambda_{m+n})\in \mathbb{Z}^{m+n}$ corresponds to the group-like element $c^{\lambda}=\prod_{1\leq i\leq m+n}c_{ii}^{\lambda_i}$, where $K[T]=K[c_{11}^{\pm 1},\ldots , c_{m+n}^{\pm 1}]$. For each $i=1, \ldots, m+n$ denote by $\epsilon_i$ an element of $\mathbb{Z}^{m+n}$ that has all entries equal to zero, except the $i$-th entry that equals 1. 

The supergroups $B^+$ and $B^-$ are isomorphic to the semi-direct products $U^-\rtimes T$ and $U^+\rtimes T$, respectively.
Similarly to the purely even case, any irreducible $B^-$-supermodule is one-dimensional and it is uniquely determined by its highest weight $\lambda\in X(T)$ and by its parity. In what follows, we denote such a supermodule by $K^{a}_{\lambda}$ for $a\in\mathbb{Z}_2$. Then the induced supermodule
$H^0(\lambda^a)=ind^G_{B^-} K^{a}_{\lambda}$ is non-zero if and only if $\lambda$ is dominant, that is,
$\lambda_1\geq\ldots\geq\lambda_m$ and $\lambda_{m+1}\geq\ldots\geq\lambda_{m+n}$. 
Moreover, if $H^0(\lambda^a)\neq 0$, then its socle is an irreducible 
$G$-supermodule, denoted by  $L(\lambda^a)$, and each irreducible $G$-supermodule is isomorphic to some $L(\lambda^a)$. Also,  
$\Pi H^0(\lambda^0)=H^0(\lambda^1)$ and $\Pi L(\lambda^0)=L(\lambda^1)$. The set of dominant weights of $G$ will be denoted by $X(T)^+$.

An algebraic supergroup $G$ is said to be {\it finite}, if $K[G]$ is finite-dimensional. The supergroup $G$ is called {\it infinitesimal} if
$\mathfrak{m}=K[G]^+$ is nilpotent. Any infinitesimal supergroup is obviously finite (cf. \cite{zub1}).

The $r$-th {\it Frobenius kernel} $G_r$ of an algebraic supergroup $G$ is defined by the Hopf superideal $\sum_{f\in \mathfrak{m}_0}K[G]f^{p^r}$.
It is clear that $G_r$ is a normal and infinitesimal subsupergroup of $G$.

Extending the notion of one-dimensional unipotent group $G_a$ (that can be also regarded as a purely even supergroup), one can define
{\it one-dimensional odd unipotent} supergroup $G_a^-$. More precisely, $A=K[G_a^-]=K[x]$, where $|x|=1$, $\Delta_A (x)=x\otimes 1+1\otimes x$, $s_A(x)=-x$ and 
$\epsilon_A(x)=0$. Each subsupergroup $U_{ij}$ of $GL(m|n)$, that is defined by the Hopf superideal
generated by elements $c_{kl}-\delta_{kl}$, where $k\neq i$ or $l\neq j$ and $|c_{ij}|=1$, is isomorphic to the supergroup $G_a^-$.

\section{Superalgebras of distributions}

Let $G$ be an algebraic supergroup. Then
$Dist(G)$ denotes  the {\it superalgebra of distributions} of $G$. As a superspace, $Dist(G)$ coincides with
$\bigcup_{k\geq 0} Dist_k(G)\subseteq K[G]^*$, where $Dist_k(G)=(K[G]/\mathfrak{m}^{k+1})^*$. $Dist(G)$ is a supercocommutative Hopf superalgebra with bijective antipode (see \cite{jan, zub1} for more details). For example, the comultiplication $\Delta_{Dist(G)}$ maps an element $\phi\in Dist_k(G)$
to $\sum \phi_1\otimes\phi_2\in Dist_k(G)^{\otimes 2}$, where
$$\phi(fh)=\sum (-1)^{|\phi_2||f|}\phi_1(f)\phi_2(h)$$ for every $f, h\in K[G]$.

If $H$ is a subsupergroup of $G$, then $Dist(H)$ is a Hopf subsuperalgebra of $Dist(G)$. More precisely,
$\phi\in Dist(G)$ belongs to $Dist(H)$ if and only if $\phi(I_H)=0$.

Every left $G$-supermodule $V$ (the case of a right supermodule is symmetrical) is also a left $Dist(G)$-supermodule using the action 
$\phi\cdot v=\sum (-1)^{|\phi||v_1|}v_1\phi(f_2)$, where $\phi\in Dist(G), v\in V$ and $\tau_V(v)=\sum v_1\otimes f_2$. If $V$ is finite-dimensional,
then $Dist(G)$ acts on $V^*$ by the rule $(\phi\cdot\psi)(v)=(-1)^{|\phi||\psi|}\psi(s_{Dist(G)}(\phi)\cdot v)$ for $v\in V, \psi\in V^*$ and $\phi\in Dist(G)$. 

\section{Central elements and adjoint invariants}

Let $A$ be an associative superalgebra. The center of $A$, denoted by $Z(A)$, consists of (homogeneous) $a\in A$ such that $ab=(-1)^{|a||b|} ba$ for every (homogeneous) $b\in A$.

Let $A$ be a Hopf superalgebra.
Denote the set 
\[\{y\in A| {\bf ad}(x)y=\epsilon_A(x)y \text{ for all homogeneous } x\in A\}\]
by $A^{{\bf ad}(A)}$. The proof of the following lemma can be modified from Lemma 6.5 of \cite{hab}. See also
Lemma \ref{converseinclusion} below.
\begin{lm}\label{centerviaadjact}
The center $Z(A)$ is contained in $A^{{\bf ad}(A)}$. Moreover, $Z(A)_0 =A_0^{{\bf ad}(A)}$.
\end{lm}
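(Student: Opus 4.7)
The plan is to verify the two assertions by direct computations with Sweedler notation, paralleling the classical Hopf-algebra identification between adjoint invariants and the center.

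For $Z(A)\subseteq A^{\mathbf{ad}(A)}$, I would take a homogeneous $y\in Z(A)$ and an arbitrary homogeneous $x\in A$ and evaluate
\[ \mathbf{ad}(x)y=\sum(-1)^{|x_2||y|}x_1\, y\, s_A(x_2). \]
Supercentrality together with $|s_A(x_2)|=|x_2|$ allows one to commute $y$ past $s_A(x_2)$ at the cost of a sign $(-1)^{|y||x_2|}$, which cancels the sign already present, and the antipode axiom $\sum x_1 s_A(x_2)=\epsilon_A(x)\,1$ then collapses the expression to $\epsilon_A(x)y$.

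The substantive direction is $A_0^{\mathbf{ad}(A)}\subseteq Z(A)_0$. Since $|y|=0$ the hypothesis reads $\sum x_1\, y\, s_A(x_2)=\epsilon_A(x)y$ without any sign, for every homogeneous $x\in A$. My plan is to evaluate the expression
\[ \sum x_1\, y\, s_A(x_2)\, x_3 \]
in two different ways. First, applying the hypothesis with $x$ replaced by $x_1$ and tensoring with $x_2$ yields $\sum x_1\, y\, s_A(x_2)\otimes x_3=y\otimes x$, and then multiplying the tensor factors gives $yx$. Second, the identity $\sum x_1\otimes s_A(x_2)x_3=x\otimes 1$, which follows from coassociativity and the antipode axiom, lets me rewrite $\sum x_1\otimes y\, s_A(x_2)x_3=x\otimes y$, whose product is $xy$. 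Equating the two evaluations forces $xy=yx$ for every homogeneous $x$, so $y\in Z(A)_0$.

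The principal subtlety will be sign bookkeeping. In the first half, the cancellation between the sign from supercentrality and the sign in the definition of $\mathbf{ad}$ is the whole content and must be checked on every Sweedler summand; the sign in the definition of $\mathbf{ad}$ is designed for precisely this cancellation. In the second half the restriction $|y|=0$ is essential, since for odd $y$ a leftover factor $(-1)^{|x_2|}$ would obstruct the antipode collapse in the second evaluation of $\sum x_1\, y\, s_A(x_2)x_3$. This is exactly why the statement claims equality only on the even parts and why the odd case must be handled separately.
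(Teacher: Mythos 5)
Your proof is correct, and it is exactly the standard argument that the paper invokes by citing Haboush's Lemma 6.5 (the paper gives no details itself): supercommutativity cancels the sign in $\mathbf{ad}$ for the inclusion $Z(A)\subseteq A^{\mathbf{ad}(A)}$, and the two evaluations of $\sum x_1\,y\,s_A(x_2)\,x_3$ give the reverse inclusion on even parts, where your observation that the leftover $(-1)^{|x_2|}$ blocks the odd case correctly identifies why equality is claimed only in degree zero.
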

\begin{lm}\label{converseinclusion}
Assume that $A$ is generated by even elements $x$ such that $\Delta_A(x)\in A_0\otimes A_0$ and by additional odd
primitive elements. Then $Z(A)= A^{{\bf ad}(A)}$.
\end{lm}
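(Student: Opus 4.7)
By Lemma \ref{centerviaadjact} the inclusion $Z(A)\subseteq A^{{\bf ad}(A)}$ is already in hand, so the plan is to establish the reverse inclusion. The approach is to fix a homogeneous $y\in A^{{\bf ad}(A)}$ and show that $[x,y]_s := xy-(-1)^{|x||y|}yx$ vanishes for every algebra generator $x$; this forces $y\in Z(A)$ because $x\mapsto [x,y]_s$ is a super derivation in $x$, so its kernel is a sub-superalgebra of $A$.

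For an odd primitive generator $x$, I would substitute $\Delta_A(x)=x\otimes 1+1\otimes x$, $s_A(x)=-x$, and $\epsilon_A(x)=0$ into the definition of ${\bf ad}$; this collapses ${\bf ad}(x)y$ to $xy-(-1)^{|x||y|}yx=[x,y]_s$, whence ${\bf ad}$-invariance gives $[x,y]_s=0$ at once.

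For an even generator $x$ with $\Delta_A(x)\in A_0\otimes A_0$, I would proceed by induction along a filtration of $A$ compatible with its coalgebra structure (the coradical filtration in general, or the order filtration $Dist_k(G)$ in the target application to $Dist(G)$). Writing
\[\Delta_A(x)=x\otimes 1+1\otimes x+\sum_i u_i\otimes v_i,\]
with $u_i,v_i\in A^+\cap A_0$ of strictly lower filtration (the assumption $\Delta_A(x)\in A_0\otimes A_0$ forces $u_i,v_i$ even), the inductive hypothesis yields that $y$ commutes with each $u_i$, $v_i$, $s_A(u_i)$, and $s_A(v_i)$. Expanding ${\bf ad}(x)y=0$ and using these commutations turns the equation into $xy+y\,s_A(x)+(\sum_i u_i s_A(v_i))y=0$, and the antipode identity $\sum_i u_i s_A(v_i)=-x-s_A(x)$ (from $m(\mathrm{id}\otimes s_A)\Delta_A(x)=0$) reduces it to $y\,s_A(x)=s_A(x)\,y$; then substituting $s_A(x)=-x-\sum_i s_A(u_i)v_i$ from the twin antipode identity and invoking the inductive commutation of $y$ with each $s_A(u_i)v_i$ yields $xy=yx$, closing the induction.

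The main delicacy lies in arranging a filtration under the bare hypothesis so that every non-primitive even generator admits cross-terms of strictly lower filtration in its coproduct. This is automatic in the $Dist(G)$ application from the order filtration, and in the abstract case can be obtained from the coradical filtration of the underlying pointed coalgebra.
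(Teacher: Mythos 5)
Your treatment of the odd primitive generators coincides with the paper's, and your reduction of the whole problem to supercommutation with generators is sound (the paper in fact only needs to treat odd $y$, since Lemma~\ref{centerviaadjact} already gives $Z(A)_0=A_0^{{\bf ad}(A)}$). The divergence, and the problem, is in the even-generator case. The induction you propose does not close as stated: your inductive hypothesis tells you that $y$ supercommutes with \emph{generators} of lower filtration degree, hence with the subalgebra they generate, but the cross-terms $u_i, v_i$ in $\Delta_A(x)=x\otimes 1+1\otimes x+\sum u_i\otimes v_i$ are arbitrary elements of lower coradical degree and need not lie in that subalgebra; so the step ``the inductive hypothesis yields that $y$ commutes with each $u_i$, $v_i$'' is unjustified. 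You flag the filtration issue as a delicacy, but it is not merely a matter of choosing the right filtration on $A$ -- it is a gap in the logic of the induction itself. There is also no guarantee in the abstract hypothesis that $A$ is pointed irreducible, so the coradical filtration need not have the shape you assume.

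All of this machinery is unnecessary. The hypothesis $\Delta_A(x)\in A_0\otimes A_0$ is there only to kill signs; the even case is a one-line Hopf computation with no induction, obtained by applying ${\bf ad}$-invariance to the first leg of the coproduct rather than to $x$ itself:
\[
yx=\sum\epsilon_A(x_1)\,y\,x_2=\sum\bigl({\bf ad}(x_1)y\bigr)x_2=\sum x_1\,y\,s_A(x_2)\,x_3=\sum x_1\,y\,\epsilon_A(x_2)=xy,
\]
where the first equality is the counit axiom, the second is ${\bf ad}(x_1)y=\epsilon_A(x_1)y$, the third is coassociativity, and the fourth is the antipode axiom $\sum x_1\otimes s_A(x_2)x_3=x\otimes 1$; no signs intervene because every component of $(\Delta_A\otimes\mathrm{id})\Delta_A(x)$ is even. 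This is the paper's argument, and it works for an arbitrary such generator $x$ with no filtration hypothesis whatsoever.
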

\begin{proof}
If $y\in A_1^{{\bf ad}(A)}$ and $x$ an even generator of $A$ such that $\Delta_A(x)\in A_0\otimes A_0$, then
$yx=\sum\epsilon(x_1)yx_2=\sum x_1 ys_A(x_2) x_3=\sum x_1 y\epsilon(x_2)=xy$. If $x$ is an odd primitive generator of $A$, then
$yx=\sum (-1)^{|y||x_2|}x_1 ys_A(x_2) x_3=xy-y(-x)+yx=xy+2yx$. Thus $xy=-yx$ and the statement follows.
\end{proof}
\begin{rem}\label{asLieadj}
If $x\in A$ is a primitive element, then ${\bf ad}(x)y=[x, y]$ and $y{\bf ad}(x)=[y, x]$ for every $y\in A$.
\end{rem}
\begin{lm}\label{adjinv}
The set $A^{{\bf ad}(A)}$ is a subsuperalgebra of $A$.
\end{lm}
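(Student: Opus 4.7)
The plan is to derive the super analogue of the standard module-superalgebra identity for the adjoint action, namely
\[
\mathbf{ad}(x)(yy') \;=\; \sum (-1)^{|x_2||y|}\,\bigl(\mathbf{ad}(x_1)y\bigr)\bigl(\mathbf{ad}(x_2)y'\bigr),
\]
from which the lemma follows at once. Indeed, if $y, y' \in A^{\mathbf{ad}(A)}$, substituting $\mathbf{ad}(x_i)(\cdot) = \epsilon_A(x_i)(\cdot)$ collapses the right-hand side to $\epsilon_A(x)\,yy'$, because $\epsilon_A(x_2) \neq 0$ forces $|x_2| = 0$ so the Koszul sign disappears, and $(\epsilon_A \otimes \epsilon_A)\Delta_A = \epsilon_A$. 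The easier preliminaries I would dispose of first: $1 \in A^{\mathbf{ad}(A)}$, which is the Hopf antipode axiom $\sum x_1 s_A(x_2) = \epsilon_A(x) \cdot 1$; and gradedness of $A^{\mathbf{ad}(A)}$, which follows because $\mathbf{ad}(x)$ shifts parity by $|x|$ while $\epsilon_A$ vanishes on $A_1$, so decomposing $y$ into homogeneous parts splits the defining equation into its homogeneous components.

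To establish the Leibniz identity, the strategy is to expand the right-hand side in Sweedler notation using the 4-fold iterated coproduct $\sum x^{(1)} \otimes x^{(2)} \otimes x^{(3)} \otimes x^{(4)}$ obtained by applying $\Delta_A$ to each factor of $\Delta_A(x)$. The resulting expression is
\[
\sum (-1)^{(|x^{(2)}|+|x^{(3)}|+|x^{(4)}|)|y| + |x^{(4)}||y'|}\, x^{(1)}\, y\, s_A(x^{(2)})\, x^{(3)}\, y'\, s_A(x^{(4)}).
\]
Re-bracketing the 4-fold as $(\mathrm{id} \otimes \Delta_A \otimes \mathrm{id})\Delta_A^{(2)}(x)$ exhibits the middle pair $x^{(2)} \otimes x^{(3)}$ as $\Delta_A$ applied to the middle factor of the 3-fold coproduct, so the antipode axiom contracts $s_A(x^{(2)})\, x^{(3)}$ to a counit evaluated on that middle factor. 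This forces $|x^{(2)}| + |x^{(3)}| = 0$ on the surviving terms, reducing the sign to $(-1)^{|x_2|(|y|+|y'|)}$, and the counit axiom then shrinks the 3-fold coproduct back to $\Delta_A(x) = \sum x_1 \otimes x_2$, yielding precisely $\mathbf{ad}(x)(yy')$.

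The only genuine technical point is the careful bookkeeping of Koszul signs through the coassociativity and antipode manipulations; after that, everything is a formal consequence of the Hopf superalgebra axioms. Once the module-superalgebra identity is in hand, closure of $A^{\mathbf{ad}(A)}$ under multiplication is immediate, and combined with the unit and gradedness already noted, the lemma is proved.
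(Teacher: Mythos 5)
Your proof is correct and follows exactly the paper's route: the paper's entire proof is the citation of the module-superalgebra identity $\mathbf{ad}(x)(yz)=\sum(-1)^{|x_2||y|}(\mathbf{ad}(x_1)y)(\mathbf{ad}(x_2)z)$, which is precisely the formula you derive. You merely supply the Sweedler/antipode-axiom verification (with correct sign bookkeeping) that the paper leaves to the reader.
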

\begin{proof}
The statement follows from the formula 
\[{\bf ad}(x)(yz)=\sum (-1)^{|x_2||y|} ({\bf ad}(x_1)y)({\bf ad}(x_2)z)\]
for $x, y, z\in A.$
\end{proof}
\begin{lm}\label{adjactionondist}
The conjugation action, desribed above, induces a structure of left $G$-supermodule on $Dist(G)$. It coincides with the structure of the left $Dist(G)$-supermodule 
given by the left adjoint action.
\end{lm}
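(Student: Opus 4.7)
For the first claim, I would show that $\mathfrak{con}\colon K[G]\to K[G]\otimes K[G]$ preserves the augmentation ideal $\mathfrak{m}=K[G]^+$, so it descends to each finite-dimensional quotient $V_k = K[G]/\mathfrak{m}^{k+1}$, and then dualize. Applying $\epsilon_{K[G]}\otimes\mathrm{id}$ to $\mathfrak{con}(f) = \sum (-1)^{|f_1||f_2|} f_2\otimes s(f_1) f_3$ gives $\sum \epsilon(f_2) s(f_1) f_3 = \epsilon(f)\cdot 1$ by the counit and antipode axioms, so $\mathfrak{con}(\mathfrak{m}) \subseteq \mathfrak{m}\otimes K[G]$. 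Because $\mathfrak{con}$ is a superalgebra homomorphism, $\mathfrak{con}(\mathfrak{m}^{k+1}) \subseteq \mathfrak{m}^{k+1}\otimes K[G]$, so each $V_k$ is a right $K[G]$-supercomodule. Via the anti-equivalence on finite-dimensional right $K[G]$-supercomodules recalled in Section 1, the dual $Dist_k(G) = V_k^*$ acquires a right $K[G]$-supercomodule structure, equivalently, a left $G$-supermodule structure. The surjections $V_{k+1}\twoheadrightarrow V_k$ are comodule maps, so the dual inclusions $Dist_k(G)\hookrightarrow Dist_{k+1}(G)$ are comodule maps as well, and passing to the colimit yields the desired $G$-supermodule structure on $Dist(G)$.

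For the comparison with the adjoint action, write the induced comodule structure on $Dist(G)$ as $\tau(\psi) = \sum \psi_{(0)}\otimes f_{(1)}$. The formula from Section 4 then presents the left $Dist(G)$-action as $\phi\cdot \psi = \sum (-1)^{|\phi||\psi_{(0)}|}\psi_{(0)}\,\phi(f_{(1)})$, while the adjoint action reads ${\bf ad}(\phi)\psi = \sum (-1)^{|\phi_2||\psi|}\phi_1\psi\, s_{Dist(G)}(\phi_2)$, with multiplication on $Dist(G)\subseteq K[G]^*$ given by convolution $(\alpha\beta)(h) = \sum (-1)^{|\beta||h_1|}\alpha(h_1)\beta(h_2)$. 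The plan is to evaluate both at an arbitrary $h\in K[G]$. Unravelling $\tau$ via the basis-level dualization formula of Section 1 together with the definition of $\mathfrak{con}$, the value $(\phi\cdot\psi)(h)$ rewrites as a Sweedler sum over $\Delta^{(2)}(h)=\sum h_1\otimes h_2\otimes h_3$ in which $\psi$ is paired with $h_2$ and $\phi$ with $s(h_1)h_3$; expanding $({\bf ad}(\phi)\psi)(h)$ through the convolution product and the identity $s_{Dist(G)}(\phi_2)(h_3)=\phi_2(s(h_3))$ produces the identical expression.

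The main obstacle is the sign bookkeeping in this comparison: the dualization formula contributes factors of the shape $(-1)^{|v_k|(|v_i|+|v_k|)}$, the coaction $\mathfrak{con}$ carries $(-1)^{|f_1||f_2|}$, and the convolution product carries $(-1)^{|\beta||h_1|}$, and these have to collapse, after a re-indexing of the three-fold Sweedler sum, to the single sign $(-1)^{|\phi_2||\psi|}$ appearing in ${\bf ad}$. Once this calculation is carried out with a fixed homogeneous basis of $V_k$, the equality $\phi\cdot\psi = {\bf ad}(\phi)\psi$ is automatic, so both constructions define the same left $Dist(G)$-supermodule structure on $Dist(G)$.
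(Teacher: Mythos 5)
Your treatment of the first claim is fine and in fact more careful than the paper's: the paper simply asserts that each $\mathfrak{m}^{n+1}$ is a subsupercomodule of $(K[G],\mathfrak{con})$, whereas you verify it via $(\epsilon\otimes\mathrm{id})\circ\mathfrak{con}=\epsilon(\cdot)1$ and multiplicativity of $\mathfrak{con}$; dualizing the quotients $K[G]/\mathfrak{m}^{k+1}$ and passing to the union is exactly the paper's route.

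For the second claim, however, what you have written is a plan rather than a proof: you correctly identify that one must evaluate both actions on $h\in K[G]$, expand everything through $\Delta^{(2)}(h)$, and match signs, but you then declare that the signs ``have to collapse'' to $(-1)^{|\phi_2||\psi|}$ without exhibiting the collapse. That sign verification \emph{is} the content of the second assertion, so deferring it leaves the lemma unproved. Moreover, your chosen route makes this harder than necessary: you propose to unravel the dual coaction via the basis-level formula $\tau_{V^*}(v_i^*)=\sum v_k^*\otimes(-1)^{|v_k|(|v_i|+|v_k|)}s_A(a_{ik})$ and then feed it into $\phi\cdot\psi=\sum(-1)^{|\phi||\psi_{(0)}|}\psi_{(0)}\phi(f_{(1)})$, which forces you to carry the dualization signs and an extra antipode through the whole computation. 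The paper instead invokes the other formula of Section 4 for the $Dist(G)$-action on a dual supermodule, $(\phi\cdot\psi)(\bar f)=(-1)^{|\phi||\psi|}\psi(s_{Dist(G)}(\phi)\cdot\bar f)$, applied to $V_k^*=Dist_k(G)$; combined with the definition of $\mathfrak{con}$ this reduces the comparison to the single identity
\[(\phi\cdot\psi)(\bar f)=\sum(-1)^{|f_1||f_2|+|\phi||f_2|+|\phi_2||f_1|+|\phi||\psi|}\phi_1(f_1)\psi(f_2)\phi_2(s(f_3))=({\bf ad}(\phi)\psi)(f),\]
where the right-hand side is obtained by expanding $\sum(-1)^{|\phi_2||\psi|}\phi_1\psi\,s(\phi_2)$ through the convolution product. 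I recommend switching to that formula and then actually writing out the one-line Sweedler computation; as it stands, the step you label ``the main obstacle'' is precisely the step that is missing.
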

\begin{proof}
Since any $\mathfrak{m}^{n+1}$ is a subsupercomodule of $(K[G], \mathfrak{con})$, \[Dist_n(G)=(K[G]/\mathfrak{m}^{n+1})^*\] has a natural structure of $G$-supermodule; hence it is a $Dist(G)$-supermodule. By the definition, $(x\cdot y)(\bar f)=(-1)^{|x||y|}y(s_{Dist(G)}(x)\bar f)$, where $\bar f =f+\mathfrak{m}^{n+1}$ and $x, y\in Dist(G)$. Thus
$$(x\cdot y)(\bar f)=\sum (-1)^{|f_1||f_2|+ |x||f_2|+ |x_2||f_1|+|x||y|} x_1(f_1)y(f_2)x_2(s_G(f_3))={\bf ad}(x)y(f).$$
\end{proof}
\begin{cor}\label{normalHopf}
If $H$ is a normal subsupergroup of $G$, then  
$Dist(H)$ is normal in $Dist(G)$. 
\end{cor}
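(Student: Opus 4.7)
The plan is to deduce this from Lemma \ref{adjactionondist}, which identifies the left adjoint action of $Dist(G)$ on itself with the $G$-supermodule structure induced on $Dist(G)$ by the conjugation coaction $\mathfrak{con}$ on $K[G]$. By definition, left normality of $Dist(H)$ in $Dist(G)$ means that $Dist(H)$ is stable under $\mathbf{ad}(x)$ for every $x \in Dist(G)$, so it suffices to show that $Dist(H)$ is a $G$-subsupermodule of $Dist(G)$ with respect to the conjugation action.

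For this, I would invoke the characterization recalled in Section 4 that $H$ is normal in $G$ precisely when $\mathfrak{con}(I_H) \subseteq I_H \otimes K[G]$. This says that $I_H$, and hence each $\mathfrak{m}^{n+1} + I_H$, is a subsupercomodule of $K[G]$ under $\mathfrak{con}$, so the quotient $K[G]/(\mathfrak{m}^{n+1} + I_H)$ is a $G$-supermodule. Dualizing, each $Dist_n(H) = (K[G]/(\mathfrak{m}^{n+1} + I_H))^*$ is a $G$-subsupermodule of $Dist_n(G)$, and taking unions $Dist(H) = \bigcup_{n \geq 0} Dist_n(H)$ is $G$-stable inside $Dist(G)$. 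By Lemma \ref{adjactionondist} this is exactly the assertion that $Dist(H)$ is $\mathbf{ad}(Dist(G))$-stable. A concrete check of this reduction, if preferred, uses the formula in the proof of Lemma \ref{adjactionondist}: for $y \in Dist(H)$, $x \in Dist(G)$ and $f \in I_H$, writing $\mathfrak{con}(f) = \sum g \otimes h$ with $g \in I_H$, the expression for $(\mathbf{ad}(x) y)(f)$ only evaluates $y$ against the ``first factor'' of $\mathfrak{con}(f)$, which lies in $I_H$, so $y$ annihilates it.

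Finally, since $Dist(G)$ is supercocommutative with bijective antipode, the remark preceding Lemma \ref{integralsareinvariant} shows that left normality of $Dist(H)$ in $Dist(G)$ is equivalent to right normality, giving normality in the full sense. I do not anticipate a substantive obstacle here: the sign bookkeeping has already been absorbed into Lemma \ref{adjactionondist}, and the argument is a direct duality between subcomodules of $K[G]/\mathfrak{m}^{n+1}$ and quotient supermodules of $Dist_n(G)$.
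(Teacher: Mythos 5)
Your proposal is correct and follows exactly the route the paper intends: the corollary is stated as an immediate consequence of Lemma \ref{adjactionondist}, and your argument — that normality of $H$ gives $\mathfrak{con}(I_H)\subseteq I_H\otimes K[G]$, hence each $Dist_n(H)=(K[G]/(\mathfrak{m}^{n+1}+I_H))^*$ is a $G$-subsupermodule of $Dist_n(G)$, which by Lemma \ref{adjactionondist} is precisely ${\bf ad}$-stability — fills in the intended details, including the passage from left to right normality via supercocommutativity and bijectivity of the antipode.
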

If $G$ is {\it connected} (or {\it pseudoconnected}, in the terminology from \cite{zub1}; see also \S 3 of \cite{zubgrish}), then the converse statement is also true (analogously to Lemma 5.1 of \cite{zubgrish}). 

Combine Lemma \ref{adjactionondist} with Lemma 9.5 from \cite{zub1} to obtain the following statement.
\begin{cor}\label{inv}
If $G$ is connected, then $Dist(G)^{{\bf ad}(Dist(G))}=Dist(G)^G$.
\end{cor}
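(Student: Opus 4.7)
The plan is to reduce the statement to a direct application of Lemma \ref{adjactionondist} together with the quoted Lemma 9.5 from \cite{zub1}. By Lemma \ref{adjactionondist}, the conjugation action of $G$ on $Dist(G)$ (regarded as a locally finite $G$-supermodule via the filtration by the subsupermodules $Dist_n(G)$) differentiates to precisely the left adjoint action of $Dist(G)$ on itself. In particular, an element $y\in Dist(G)$ is a $G$-invariant for the conjugation action if and only if it is annihilated by the augmentation ideal of $Dist(G)$ under $\mathbf{ad}$, which is exactly the defining condition for $y\in Dist(G)^{\mathbf{ad}(Dist(G))}$.

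First, I would write $Dist(G)=\bigcup_{k\geq 0}Dist_k(G)$ and note that each $Dist_k(G)$ is a finite-dimensional subsuperspace stable under both the conjugation action of $G$ and the adjoint action of $Dist(G)$, since $\mathfrak{m}^{n+1}$ is a subsupercomodule of $(K[G],\mathfrak{con})$. This reduces the equality of invariants to the level of each $Dist_k(G)$, and then to a statement about a finite-dimensional $G$-supermodule $V$: namely, $V^G=V^{Dist(G)}$ when $G$ is connected.

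Second, I would invoke Lemma 9.5 of \cite{zub1}, which provides exactly this coincidence of $G$-invariants and $Dist(G)$-invariants for connected $G$. Applying it to $V=Dist_k(G)$ and taking the union over $k$ yields
\[
Dist(G)^G=\bigcup_{k\geq 0}Dist_k(G)^G=\bigcup_{k\geq 0}Dist_k(G)^{Dist(G)}=Dist(G)^{\mathbf{ad}(Dist(G))},
\]
where the middle equality uses Lemma \ref{adjactionondist} to identify the two module structures. This completes the argument.

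The only delicate point, and thus the main thing to be careful about, is the interplay between the infinite-dimensional nature of $Dist(G)$ and the statement of Lemma 9.5 from \cite{zub1}, which is typically phrased for finite-dimensional or locally finite $G$-supermodules. The filtration $Dist(G)=\bigcup_k Dist_k(G)$ handles this cleanly since invariance is a pointwise condition and each graded piece is finite-dimensional; no additional hypothesis is needed beyond what Lemma \ref{adjactionondist} already guarantees.
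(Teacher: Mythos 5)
Your proposal is correct and follows exactly the route the paper intends: its entire proof is the one-line instruction to combine Lemma \ref{adjactionondist} with Lemma 9.5 of \cite{zub1}, and your write-up simply makes explicit the reduction to the finite-dimensional pieces $Dist_k(G)$ that this combination tacitly relies on. No discrepancy to report.
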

Using Lemma \ref{adjactionondist} we obtain a homomorphism from $G(C)$ to the group of $C$-linear superalgebra automorphisms of $Dist(G)\otimes C$. Let $i_g$ denote the image of $g\in G(C)$ in $Aut_C(K[G]\otimes C)$. Each operator $i_g$ is locally finite
in the sense that it preserves all finitely generated $A$-subsupermodules $Dist_n(G)\otimes A$.
   
Let $R$ be a normal infinitesimal subsupergroup of $G$. Then $H=Dist(R)=K[R]^*$ is a normal Hopf supersubalgebra of $Dist(G)$.

Recall that for every $n\geq 1$ there is the natural pairing 
\[(Dist_n(G)\otimes C)\times (K[G]/\mathfrak{m}^{n+1}\otimes C)\to C\] 
that allows us to define the pairing
\[(Dist(G)\otimes C)\times (K[G]\otimes C)\to C .\] 
More precisely, if $\phi\in Dist(G)\otimes C$ and $f\in K[G]\otimes C$, then 
for any integer $n\geq 1$ such that $\phi\in Dist_n(G)\otimes C$ we set $\phi(f)=\phi(\overline{f})$, where
$\overline{f}=f+\mathfrak{m}^{n+1}\otimes C$. It is clear that this definition does not depend on $n$, hence it is consistent.

Then $i_g (\phi)(f)=\phi(g^{-1}f)$ for any $g\in G(C), \phi\in Dist(G)\otimes C$ and $f\in K[G]\otimes C$.

The superalgebra $H\otimes C$ is again a right $K[R]\otimes C$-supermodule with respect to the action $\phi r(r')=\phi(r r')$, where $\phi\in H\otimes C$, and $r, r'\in K[R]\otimes C$. Arguing as in Lemma 7.5 of \cite{hab}, we derive that 
$i_g(\phi r)=i_g(\phi)g(r)$.

Let $\nu$ be a right integral on $H$. Since the map $r\mapsto \nu r$ induces an isomorphism of right $K[R]$-supermodules $\Pi^{|\nu|}K[R]\to H$, it can be extended to an isomorphism of $K[R]\otimes C$-supermodules $\Pi^{|\nu|}K[R]\otimes C\to H\otimes C$ by $r\otimes c\mapsto (\nu\otimes 1)(r\otimes c)=
(\nu r)\otimes c$. 

Since the space of integrals on $H$ is one-dimensional, Lemma \ref{integralsareinvariant} implies that there is a character $\chi$ of $G$ such that $i_g(\nu\otimes 1)=\nu\otimes\chi(g)$ for all $g\in G(C)$ and $C\in\mathsf{SAlg}_K$.
For any $G$-supermodule $V$ and for any character $\pi$ of $G$ let $V^{G, \pi}$ denote the subsuperspace $\{v\in V|
\tau_V(v)=v\otimes\pi\}$. For example, if $\pi=1$, then $V^{G, \pi}=V^G$. It is also clear that $v\in V^{G, \pi}$ if and only if
$g(v\otimes 1)=v\otimes\pi(g)$ for every $g\in G(C)$ and $C\in\mathsf{SAlg}_K$.
\begin{lm}\label{equalities}
The isomorphism $f\mapsto \nu f$ induces an isomorphism of superspaces
\[(\Pi^{|\nu|}K[R])^{R, -\chi}\simeq H^{{\bf ad}(H)}=H^R\] and an isomorphism of superspaces \[(\Pi^{|\nu|}K[R])^{G, -\chi}\simeq H^G=Dist(G)^G\bigcap H.\]
\end{lm}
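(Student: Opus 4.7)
The plan is to transport the conjugation $G$-coaction on $H$ through the Hopf supermodule isomorphism $r \mapsto \nu r$ of Corollary \ref{rightisom} (applied to the finite-dimensional Hopf superalgebra $K[R]$) and to read off both claimed isomorphisms from the resulting description. For the transport, fix $r \in K[R]$ and write $\mathfrak{con}(r) = \sum r_{(0)} \otimes r_{(1)} \in K[R] \otimes K[G]$. Combining the identity $i_g(\phi r) = i_g(\phi) g(r)$ recalled just before the lemma with $i_g(\nu \otimes 1) = \nu \otimes \chi(g)$ and the evenness of $\chi(g) \in C_0$ (as $\chi$ is group-like, hence even), a short computation in $H \otimes C$ yields
\[
i_g(\nu r \otimes 1) \;=\; (\nu \otimes \chi(g)) \cdot g(r \otimes 1) \;=\; \sum \nu r_{(0)} \otimes g(\chi r_{(1)}).
\]
In coaction language this says that the conjugation coaction $\tau \colon H \to H \otimes K[G]$ satisfies $\tau(\nu r) = \sum \nu r_{(0)} \otimes \chi r_{(1)}$.

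Next I would use this formula to derive both semi-invariance statements. For any character $\pi$ of $G$, the condition $\nu r \in H^{G,\pi}$ is equivalent to $\sum \nu r_{(0)} \otimes \chi r_{(1)} = \nu r \otimes \pi$ in $H \otimes K[G]$; tensoring the inverse of the bijection $r \mapsto \nu r$ with $id_{K[G]}$ cancels $\nu$ from both sides and leaves $\mathfrak{con}(r) = r \otimes \pi \chi^{-1}$, that is, $r \in (\Pi^{|\nu|} K[R])^{G, \pi - \chi}$. Taking $\pi$ trivial yields the second isomorphism $(\Pi^{|\nu|} K[R])^{G, -\chi} \simeq H^G$, and restricting the same reasoning to $g \in R(C) \subseteq G(C)$ (which replaces $\chi$ by $\chi|_R$ and $\mathfrak{con}$ by its composition with the restriction $K[G] \to K[R]$) gives the first, $(\Pi^{|\nu|} K[R])^{R, -\chi} \simeq H^R$.

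Finally I would identify the right-hand sides. Since $R$ is normal in $G$, the inclusion $H = Dist(R) \hookrightarrow Dist(G)$ is $G$-equivariant for conjugation, which immediately yields $H^G = Dist(G)^G \cap H$. For $H^R = H^{{\bf ad}(H)}$ I would apply Lemma \ref{adjactionondist} to the infinitesimal subgroup $R$: the $R$-supermodule structure on $Dist(R) = H$ from conjugation coincides with the $Dist(R) = H$-supermodule structure given by the left adjoint action, and because $R$ is infinitesimal, the $R$-invariants of any $R$-supermodule coincide with its $Dist(R)$-invariants, so $H^R = H^{Dist(R)} = H^{{\bf ad}(H)}$.

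The main obstacle is the transport formula of the first paragraph: one must carefully handle the right $K[R]$-module structure $(\phi r)(r') = \phi(rr')$ on $H$, the conjugation coaction $\mathfrak{con}$ on $K[R]$, the semi-invariance of $\nu$, and the super-signs that appear in products inside $H \otimes C$; the parity shift $\Pi^{|\nu|}$ enters only so that $r \mapsto \nu r$ is even, and does not affect invariance. Once $\tau(\nu r) = \sum \nu r_{(0)} \otimes \chi r_{(1)}$ is secured, both isomorphisms follow by purely formal manipulation together with the $G$-equivariance of $H \hookrightarrow Dist(G)$ and Lemma \ref{adjactionondist}.
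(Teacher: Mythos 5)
Your proposal is correct and follows essentially the same route as the paper: both rest on the identity $i_g(\nu r\otimes 1)=(\nu\otimes\chi(g))\,g(r\otimes 1)$, with your coaction formula $\tau(\nu r)=\sum \nu r_{(0)}\otimes\chi r_{(1)}$ being exactly what the paper extracts by evaluating at the generic point $g=id_{K[G]}$. The only difference is cosmetic — you derive both inclusions at once from the coaction formula and you spell out the identifications $H^{{\bf ad}(H)}=H^R$ and $H^G=Dist(G)^G\cap H$ (via Lemma \ref{adjactionondist} and the finiteness of $R$), which the paper leaves implicit.
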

\begin{proof}
Let $\phi\in H$ and $\phi=\nu r$ for $r\in K[R]$. By the definition, 
we have
\[i_g(\phi\otimes 1)=(\nu\otimes 1)((1\otimes\chi(g))g(r\otimes 1))\]
for every $g\in G(C)$ and  $C\in\mathsf{SAlg}_K$. Therefore the maps in the lemma are embeddings of $(\Pi^{|\nu|}K[R])^{R, -\chi}$ and $(\Pi^{|\nu|}K[R])^{G, -\chi}$ into $H^R$ and $H^G$, respectively. 

Conversely, assume that $\phi\in H^G$ (the case $\phi\in H^R$ is analogous). Then $\phi=\nu r$ for some $r\in R$. Set $C=K[G]$ and $g=id_{K[G]}\in G(K[G])$. If $\mathfrak{con}(r)=\sum r_1\otimes f_2$, then
\[i_g(\phi\otimes 1)=\sum \nu r_1\otimes \chi f_2 =\nu r\otimes 1.\]
Therefore $\mathfrak{con}(r)=r\otimes \chi^{-1}$, which means $r \in R^{G, -\chi}$.
\end{proof}
\begin{rem}\label{isomuptogrouplikeelement}
The proof of Lemma \ref{equalities} also shows that the $G$-supermodules $H$ and $\Pi^{|\nu|}K[R]\otimes\chi$ are isomorphic. The isomorphism is defined by $r\otimes\chi\mapsto \nu r$ 
for $r\in K[R]$.
\end{rem}
From now on $R=G_r$ is an $r$-th Frobenius kernel of $G$ (cf. \cite{jan, zub1}). 
Let us choose homogeneous elements $f_i\in K[G]$ such that $f_i +\mathfrak{m}^2$ form a homogeneous basis of $\mathfrak{m}/\mathfrak{m}^2=Lie(G)^*$. Assume that $|f_i|=0$ whenever $1\leq i\leq n_0\leq n$, and $|f_i|=1$ whenever $n_0<i\leq n$, and denote $n-n_0$ by $n_1$.
\begin{rem}\label{basesforFrob}
Suppose that $G_{ev}$ is reduced.
Combining Theorem 4.5 of \cite{amas}, with I.9.6 of \cite{jan}, we see that $K[G_r]$ has a basis consisting of all monomials $\prod_{1\leq i\leq t}\bar{f}_i^{a_i}$, where 
$\bar{f}_i$ denotes the image of $f_i$ in $K[G_r]$, $0\leq a_i\leq p^r -1$ provided $|f_i|=0$, and $a_i=0, 1$ provided $|f_i|=1$. 
\end{rem}
Denote by $\nu=\nu_r$ a right integral on $Dist(G_r)$ and by $\chi=\chi_r$ the character of $G$ corresponding to right integrals.
Denote by $Ad$ the adjoint representation of $G$ on $Lie(G)=Dist_1(G)^+$, that is $Ad(g)=i_g|_{Lie(G)\otimes C}$ for $g\in G(C)$.
\begin{pr}\label{berinfinitesimal}
We have the following statements  
\begin{enumerate}
\item $|\nu_r|=n_1 \pmod 2$,
\item the character $\chi_r$ maps 
$g$ to $Ber(Ad(g))^{p^r-1}\det(Ad(g)_{11})^{p^r}$. 
\end{enumerate}
\end{pr}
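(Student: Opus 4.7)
The plan is to identify $\nu_r$ as the functional dual to the top monomial in $K[G_r]$, and then to compute how this monomial transforms under the conjugation action of $g\in G(C)$.

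By Remark~\ref{basesforFrob}, $K[G_r]$ has a homogeneous monomial basis with a unique top element $T=\prod_{|f_i|=0}\bar f_i^{p^r-1}\prod_{|f_i|=1}\bar f_i$ spanning the socle $\mathrm{ann}_{K[G_r]}(\mathfrak m)$. Direct verification shows that the dual functional $T^{*}\in Dist(G_r)=K[G_r]^*$ satisfies the defining relation $T^{*}\phi=\epsilon(\phi)T^{*}$ for a right integral: evaluating at any basis monomial $M$ uses that $\Delta_{K[G_r]}(M)$ contains no $T\otimes(\cdot)$ summand when $M$ has lower degree than $T$ (so $T^{*}(M_1)=0$), while the leading term $T\otimes 1$ of $\Delta(T)$ gives $(T^{*}\phi)(T)=\epsilon(\phi)$. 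By uniqueness of the right integral up to scalar (Lemma~\ref{larsentheorem} and Corollary~\ref{rightisom}), $\nu_r\propto T^{*}$. Since each odd $\bar f_i$ appears in $T$ exactly once and each even $\bar f_i$ only in powers, $|T|=n_1\pmod 2$, hence $|\nu_r|=n_1\pmod 2$, proving (1).

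For (2), conjugation by $g\in G(C)$ is an algebra automorphism $\sigma_g$ of $K[G_r]\otimes C$ preserving $\mathfrak m\otimes C$ and therefore stabilising the one-dimensional socle; thus $\sigma_g(T)=\lambda(g)\,T$ for some $\lambda(g)\in C^{\times}$, and dualising gives $\chi_r(g)=\lambda(g^{-1})$. The induced action of $\sigma_g$ on $\mathfrak m/\mathfrak m^2=Lie(G)^{*}$ is the coadjoint representation, with supermatrix $A(g)=\begin{pmatrix}A_{00}&A_{01}\\A_{10}&A_{11}\end{pmatrix}$ dual to $Ad(g)$. Splitting $T=T_{ev}\,T_{odd}$ with $T_{ev}=\prod_{|f_i|=0}\bar f_i^{p^r-1}$ and $T_{odd}=\prod_{|f_i|=1}\bar f_i$, the strategy is to extract the coefficient of $T$ in $\sigma_g(T_{ev})\sigma_g(T_{odd})$ via two classical ingredients: (a) in the truncated polynomial algebra $K[\bar f_i]/(\bar f_i^{p^r})$ a linear substitution by an even matrix $M$ multiplies the top monomial $\prod\bar f_i^{p^r-1}$ by $\det(M)^{p^r-1}$, a consequence of Freshman's dream $(\sum c_j\bar f_j)^{p^r}=\sum c_j^{p^r}\bar f_j^{p^r}=0$; (b) on the exterior algebra in odd generators, the top form $\prod\bar f_i$ transforms by $\det$ of the acting even matrix.

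In the block-diagonal case $A_{01}=A_{10}=0$, (a) and (b) combine directly to give $\lambda(g)=\det(A_{00})^{p^r-1}\det(A_{11})=Ber(A(g))^{p^r-1}\det(A(g)_{11})^{p^r}$. In the general case, the odd off-diagonal entries produce cross-terms: some odd-factor slots in $\sigma_g(T_{odd})$ get replaced by $A_{01}$-substitutions introducing even $\bar f_j$'s, which pair with the "missing" factors in $\sigma_g(T_{ev})$. Collecting these contributions via the Schur complement yields
\[
\lambda(g)=\det\bigl(A_{00}-A_{01}A_{11}^{-1}A_{10}\bigr)^{p^r-1}\det(A_{11}),
\]
which by the Berezinian identity $Ber(A)=\det(A_{00}-A_{01}A_{11}^{-1}A_{10})\det(A_{11})^{-1}$ becomes $Ber(A(g))^{p^r-1}\det(A(g)_{11})^{p^r}$; translating from the coadjoint $A(g)$ to $Ad(g)$ produces the stated formula. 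The main technical obstacle is precisely this cross-term bookkeeping: tracking signs from odd anti-commutativity and reassembling the mixed contributions into a Schur complement, which is where the super-structure genuinely enters and the Berezinian emerges as the natural super-invariant, as opposed to the purely even Haboush character $\det(Ad)^{p^r-1}$.
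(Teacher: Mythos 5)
Your overall reduction---everything comes down to the scalar by which conjugation multiplies the top monomial $F=T$---is the same as the paper's, but two steps are genuinely broken or missing. First, the identification $\nu_r\propto T^{*}$ is false, and your ``direct verification'' fails exactly at $M=T$: the generators of $K[G_r]$ are not primitive, so $\Delta(T)$ \emph{does} contain summands $T\otimes M_2$ with $M_2\in\mathfrak m$. Already for $GL(1)_r$, where $K[G_r]=K[t]/(t^{p^r})$ and $\Delta(t)=t\otimes 1+1\otimes t+t\otimes t$, one finds $\binom{p^r-1}{j}\,t^{p^r-1}\otimes t^{j}$ in $\Delta(t^{p^r-1})$ for every $j$, with $\binom{p^r-1}{j}=(-1)^j\neq 0$; hence $(T^{*}\phi)(T)=\sum_j(-1)^j\phi(t^j)\neq\epsilon(\phi)$, and the right integral is the alternating sum $\Delta^{(r)}_{T}=\sum_s(-1)^s\binom{e}{s}$, not the single dual vector $\binom{e}{p^r-1}$. (Compare Lemma \ref{newformofintegral}, where $\nu_r$ for $GL(1|1)$ is the long sum $-g_{1,q-1}$.) What survives is weaker but sufficient: since $h\mapsto\nu_r h$ is bijective and $K[G_r]T=KT$, one gets $\nu_r(T)\neq 0$, whence $|\nu_r|=|T|=n_1$; and the character is extracted not by ``dualising $T^{*}$'' but from the $G$-isomorphism $Dist(G_r)\simeq\Pi^{|\nu|}K[G_r]\otimes\chi$ of Remark \ref{isomuptogrouplikeelement} combined with $K[G_r]T=KT$, which is exactly the paper's route.

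Second, the heart of part (2)---that the scalar equals $\det(A_{00}-A_{01}A_{11}^{-1}A_{10})^{p^r-1}\det(A_{11})$---is asserted rather than proved: you name the cross-term bookkeeping as ``the main technical obstacle'' and then skip it. The paper closes this gap by factoring the acting matrix as
\[
\Phi=\begin{pmatrix}I&\Phi_{01}\Phi_{11}^{-1}\\0&I\end{pmatrix}
\begin{pmatrix}\Phi_{00}-\Phi_{01}\Phi_{11}^{-1}\Phi_{10}&0\\0&\Phi_{11}\end{pmatrix}
\begin{pmatrix}I&0\\ \Phi_{11}^{-1}\Phi_{10}&I\end{pmatrix}
\]
and observing that the two unitriangular factors fix $F$ outright: every cross-term they create either inserts an odd generator already present to the first power in $F$ (killed by $\bar f^{2}=0$) or inserts an even generator already present to the power $p^r-1$ (killed by $\bar f^{p^r}=0$). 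Only the block-diagonal middle factor survives, and your ingredients (a) and (b) then apply verbatim to produce the Schur complement and $\det(\Phi_{11})$. Without this factorization, or an honest expansion replacing it, the formula for $\lambda(g)$ is unproved; with it, your argument becomes the paper's.
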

\begin{proof}
Set $V=\sum_{1\leq i\leq n} K\bar{f}_i$.
An operator $\Phi\in GL(V)(C)$ for $C\in \mathsf{SAlg}_K$
has the following matrix in the $n_0|n_1$-block form (with respect to the above basis)  
$$\Phi=\left(\begin{array}{cc}
\Phi_{00} & \Phi_{01} \\
\Phi_{10} & \Phi_{11}
\end{array}
\right).
$$
Since $\Phi_{00}$ and $\Phi_{11}$ are invertible, $\Phi$ has the decomposition 
$$
\Phi=\left(\begin{array}{cc}
I_{n_0} & \Phi_{01}\Phi_{11}^{-1} \\
0 & I_{n_1}
\end{array}\right)\left(\begin{array}{cc}
\Phi_{00}-\Phi_{01}\Phi_{11}^{-1}\Phi_{10} & 0 \\
0 & \Phi_{11}
\end{array}\right)\left(\begin{array}{cc}
I_{n_0} & 0 \\
\Phi_{11}^{-1}\Phi_{10} & I_{n_1}
\end{array}
\right)
$$
Set $F=(\prod_{1\leq i\leq n_0}\bar{f}_i)^{p^r -1}\prod_{n_0 < i\leq n}\bar{f}_i$. The first and the third factors of the above decomposition of $\Phi$ act on $F$ trivially.
Arguing as in Proposition I.9.7 of \cite{jan}, we obtain that $\Phi$ maps the element $F\otimes 1$ 
to $F\otimes c(\Phi)$, where 
\[c(\Phi)=\det(\Phi_{00}-\Phi_{01}\Phi_{11}^{-1}\Phi_{10})^{p^r -1}\det({\Phi_{11}})=Ber(\Phi)^{p^r -1}\det(\Phi_{11})^{p^r}.\] 
Since $Ber(\Phi)$ and $\det(\Phi_{11})^{p^r}$ are group homomorphisms from $GL(V)(C)$ to $C_0^{\times}$,
$c(Ad(g))=Ber(Ad(g))^{-(p^r -1)}\det(Ad(g)_{11})^{-p^r}.$ Observe also that $K[G_r]F=K F$.
Using Remark \ref{isomuptogrouplikeelement} we complete our proof as in Proposition I.9.7 of \cite{jan}.
\end{proof}

\section{Central elements in $Dist(GL(m|n))$}

Throughout this section $G=GL(m|n)$.
\begin{lm}\label{integralascentral}
In the notation of Proposition \ref{berinfinitesimal}, $\chi_r=1$ for every $r\geq 1$.
\end{lm}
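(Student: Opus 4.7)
The strategy is to apply Proposition \ref{berinfinitesimal}(2), which gives $\chi_r(g)=Ber(Ad(g))^{p^r-1}\det(Ad(g)_{11})^{p^r}$, and then show that both $Ber(Ad(g))$ and $\det(Ad(g)_{11})$ are identically equal to $1$. Both maps are group-like elements of $K[G]$, hence characters of $G=GL(m|n)$; since $X(GL(m|n))$ is cyclic generated by $Ber$ and $Ber$ is nontrivial on the maximal diagonal torus $T$, it suffices to verify the vanishing on $t\in T(C)$ for $C\in\mathsf{SAlg}_K$.

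Fix $t=\mathrm{diag}(a_1,\ldots,a_m,b_1,\ldots,b_n)\in T(C)$. The standard basis $\{E_{ij}\}_{1\le i,j\le m+n}$ of $Lie(G)=\mathfrak{gl}(m|n)$ diagonalises $Ad(t)$ with eigenvalue $t_{ii}t_{jj}^{-1}$ on $E_{ij}$; the parity of $E_{ij}$ is $|i|+|j|$, where $|k|=0$ for $k\le m$ and $|k|=1$ otherwise. The product of the eigenvalues on the even part splits as $\prod_{i,j\le m}a_ia_j^{-1}\cdot\prod_{m<i,j\le m+n}t_{ii}t_{jj}^{-1}$, and each subproduct telescopes to $1$; hence $\det(Ad(t)_{00})=1$. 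The product of the eigenvalues on the odd part splits as $\prod_{i\le m<j}a_it_{jj}^{-1}\cdot\prod_{j\le m<i}t_{ii}a_j^{-1}$, which evaluates to $(a_1\cdots a_m)^n(b_1\cdots b_n)^{-m}\cdot(b_1\cdots b_n)^m(a_1\cdots a_m)^{-n}=1$; hence $\det(Ad(t)_{11})=1$. Since $Ad(t)$ is block-diagonal with respect to parity, $Ber(Ad(t))=\det(Ad(t)_{00})/\det(Ad(t)_{11})=1$ as well.

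Consequently $\chi_r|_T=1$, and the injectivity of the restriction $X(G)\to X(T)$ yields $\chi_r=1$. The main subtlety is the reduction to the torus, which appeals to the standard fact that every group-like element of $K[GL(m|n)]$ is a power of $Ber$. If one prefers to avoid this, one can compute $Ber(Ad(g))$ and $\det(Ad(g)_{11})$ directly for arbitrary $g$ via the decomposition $\mathfrak{gl}(m|n)\cong V\otimes V^*$, with $V$ the standard representation, in which the contributions of $V$ and $V^*$ to the Berezinian of the tensor product action have opposite exponents and cancel.
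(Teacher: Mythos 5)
Your computation on the torus is correct and is exactly the paper's: $Ad(t)$ is diagonal with eigenvalue $t_{ii}t_{jj}^{-1}$ on $E_{ij}$, both the even and the odd eigenvalue products telescope to $1$, hence $Ber(Ad(t))=\det(Ad(t)_{11})=1$ and $\chi_r|_T=1$ by Proposition \ref{berinfinitesimal}. The gap is in how you justify the reduction to $T$. The claim that $X(GL(m|n))$ is cyclic generated by $Ber$ is false in positive characteristic. Already for $G=GL(1|1)$ the element $c_{11}^p$ is group-like: in $\Delta(c_{11})=c_{11}\otimes c_{11}+c_{12}\otimes c_{21}$ the two summands commute and the second one is square-zero, so $\Delta(c_{11}^p)=c_{11}^p\otimes c_{11}^p$; its restriction to $T$ is $(p,0)\in\mathbb{Z}^2$, not a multiple of $Ber|_T=(1,-1)$, so $X(GL(1|1))$ has rank at least two. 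Indeed the paper itself relies on such non-Berezinian characters: the factor $\det(Ad(g)_{11})^{p^r}$ in Proposition \ref{berinfinitesimal} is one of them.

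What your argument actually needs is only the injectivity of the restriction $X(G)\to X(T)$, which is what the paper invokes (Lemma 13.5 of \cite{zub3}) and which holds for a different reason: a character trivial on $T$ is trivial on $G_{ev}=GL_m\times GL_n$ by the classical argument, and it is automatically trivial on the odd unipotent subsupergroups $U_{ij}\simeq G_a^-$, which admit only the trivial character; since $K[G]\simeq K[V^-]\otimes K[G_{ev}]\otimes K[V^+]$, such a character is trivial on all of $G$. With that substitution your proof is complete and coincides with the paper's. Your closing alternative via $Lie(G)\simeq V\otimes V^*$ is plausible for $Ber(Ad(g))$, but it is only a sketch and needs care for the second factor, since $\det(\Phi_{11})$ alone is not multiplicative on $GL(V)$ --- only its $p^r$-th power is.
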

\begin{proof}
Set $t_{ij}=c_{ij}-\delta_{ij}$ for $1\leq i, j\leq m+n$. $V=
\sum_{1\leq i, j\leq m+n} Kt_{ij}$ is a supersubmodule of $K[G]$ with respect to the adjoint action.
Besides, $t_{ij}+\mathfrak{m}^2$ form a basis of $Lie(G)^*$. By Lemma 13.5 of \cite{zub3}, it is enough to check that
$\chi|_T =1$. It can be easily seen that
the matrix $Ad(t)$ is also diagonal. Moreover, the diagonal entries of $Ad(t)$ are $t_i t_j^{-1}$ for $1\leq i, j\leq m+n$, where 
$$t=\left(\begin{array}{cccc}
t_1 & 0 & \dots & 0 \\
0 & t_2 & \dots & 0 \\
\vdots & \vdots & \vdots & \vdots\\
0 & 0 & \dots & t_{m+n}
\end{array}\right)\in T(C).$$
Thus $Ber(Ad(t))=\det(Ad(t)_{22})=1$. 
\end{proof}
Observe that $|\nu_r|= 2mn =0\pmod 2$, that is each $\nu_r$ is even.
\begin{cor}\label{corko}
The map $f\to \nu_r f$ induces isomorphisms $K[G_r]^{G_r}\simeq Dist(G_r)^{G_r}$ and $K[G_r]^G\simeq Dist(G_r)\bigcap Dist(G)^G$.
In particular, each $\nu_r$ belongs to $Z(Dist(G))$ and hence $\nu_r$ is a two-sided integral.
\end{cor}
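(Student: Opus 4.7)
The plan is to specialize Lemma \ref{equalities} to the present setting $R = G_r$, $H = Dist(G_r)$, and then leverage the specific structure of $GL(m|n)$ to promote the invariants to central elements.

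First, I would feed in the two facts just established: Lemma \ref{integralascentral} gives $\chi_r = 1$, and the remark preceding the corollary gives $|\nu_r| = 2mn \equiv 0 \pmod 2$. Consequently $\Pi^{|\nu_r|}$ is trivial and the trivial character $-\chi_r$ cuts out the ordinary invariants, so
\[(\Pi^{|\nu_r|} K[G_r])^{G_r, -\chi_r} = K[G_r]^{G_r}, \qquad (\Pi^{|\nu_r|} K[G_r])^{G, -\chi_r} = K[G_r]^G.\]
Lemma \ref{equalities} then yields the two desired isomorphisms $K[G_r]^{G_r} \simeq Dist(G_r)^{G_r}$ and $K[G_r]^G \simeq Dist(G_r) \cap Dist(G)^G$, both implemented by $f \mapsto \nu_r f$.

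Next, to show $\nu_r \in Z(Dist(G))$, I would observe that $1 \in K[G_r]^G$, since the unit is fixed under the conjugation action, and its image under the second isomorphism is $\nu_r \cdot 1 = \nu_r$. Hence $\nu_r \in Dist(G)^G$. Since $GL(m|n)$ is connected, Corollary \ref{inv} identifies $Dist(G)^G$ with $Dist(G)^{{\bf ad}(Dist(G))}$. To conclude membership in the center, I would invoke Lemma \ref{converseinclusion}: its hypothesis is satisfied for $G = GL(m|n)$ because $Dist(G)$ is generated by $Dist(G_{ev})$, whose generators are even and satisfy $\Delta \in A_0 \otimes A_0$, together with the odd primitive root elements coming from the off-diagonal odd blocks.

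Finally, since $\nu_r$ is even and central in $Dist(G)$, for every $x \in Dist(G_r)$ we have $x \nu_r = \nu_r x = \epsilon(x) \nu_r$, where the second equality is the right-integral property. Hence $\nu_r$ is simultaneously a left integral on $Dist(G_r)$, i.e.\ two-sided. The main conceptual step is the identification $Dist(G)^G = Z(Dist(G))$; everything else is bookkeeping of parity shifts and the trivialization of $\chi_r$. The only ingredient outside the excerpt that I rely on is the generation of $Dist(GL(m|n))$ by $Dist(G_{ev})$ and odd primitive root elements, which is standard.
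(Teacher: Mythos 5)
Your proof is correct and follows the route the paper intends: specialize Lemma \ref{equalities} to $R=G_r$ with $\chi_r=1$ (Lemma \ref{integralascentral}) and $|\nu_r|=0$, take the image of $1\in K[G_r]^G$ to get $\nu_r\in Dist(G)^G=Dist(G)^{{\bf ad}(Dist(G))}$ (Corollary \ref{inv}), and pass from adjoint invariance to centrality. The only cosmetic difference is that, since $\nu_r$ is even, you could have concluded centrality directly from the equality $Z(A)_0=A_0^{{\bf ad}(A)}$ in Lemma \ref{centerviaadjact}, without invoking the generation hypothesis of Lemma \ref{converseinclusion} (which the paper only verifies afterwards, in Lemma \ref{apllyingtogenlin}).
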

Let $e_{ij}$ be the generators of $gl(m|n)=Lie(GL(m|n))$ that are dual to $t_{ij}+\mathfrak{m}^2$. In other words, 
$e_{ij}(t_{kl})=\delta_{i k}\delta_{j l}$ for $1\leq i, j, k,l\leq m+n$. The Hopf superalgebra $Dist(G)$ can be identified with
$K\otimes_{\mathbb{Z}} U_{\mathbb{Z}}(gl(m|n))$, where $U_{\mathbb{Z}}(gl(m|n))$ is a $\mathbb{Z}$-supersubalgebra of
$U_{\mathbb{Q}}(gl(m|n))$ generated by the elements
\[e_{ij}^{(t)}=\frac{e_{ij}^t}{t!} \mbox{ and } 
\left(\begin{array}{c}
e_{ii} \\
s
\end{array}\right)=\frac{e_{ii}(e_{ii}-1)\ldots (e_{ii}-s+1)}{s!},\]
where $i\neq j$ and $s, t\geq 0.$
Additionally, if $|e_{ij}|=1$, then $t=0, 1$. 

The Hopf superring $U_{\mathbb{Z}}(gl(m|n))$ has a $\mathbb{Z}$-basis (as a free $\mathbb{Z}$-supermodule)
consisting of ordered products of the above generators, say (see Lemma 3.1 of \cite{brunkuj})
\[\prod_{|e_{ij}|=1}e_{ij}^{a_{ij}}\prod_{1\leq i\leq m+n}\left(\begin{array}{c}
e_{ii} \\
s_i
\end{array}\right)\prod_{|e_{ij}|=0}e_{ij}^{(d_{ij})}.\]
One can choose a different ordering, for example 
\[\prod_{|e_{ij}|=1}e_{ij}^{a_{ij}}\prod_{|e_{ij}|=0, i>j}e_{ij}^{(d_{ij})}\prod_{1\leq i\leq m+n}\left(\begin{array}{c}
e_{ii} \\
s_i
\end{array}\right)\prod_{|e_{ij}|=0, i<j}e_{ij}^{(d_{ij})}, \]
and many additional ones. 

Since \[\Delta_{Dist(G)}(Dist(G_{ev}))\subseteq Dist(G_{ev})\otimes Dist(G_{ev})\] and 
\[\Delta_{Dist(G_r)}(Dist(G_{ev, r}))\subseteq Dist(G_{ev, r})\otimes Dist(G_{ev, r})\] as well, Lemma \ref{converseinclusion} implies the following statement.
\begin{lm}\label{apllyingtogenlin}
The center of $Dist(G)$ coincides with $Dist(G)^{{\bf ad}(Dist(G))}$ and the center of $Dist(G_r)$ coincides with $Dist(G_r)^{{\bf ad}(Dist(G_r))}$. 
\end{lm}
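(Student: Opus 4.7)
The plan is to verify the hypotheses of Lemma \ref{converseinclusion} for the Hopf superalgebras $Dist(G)$ and $Dist(G_r)$ and then invoke that lemma directly. The crux is to exhibit a generating set of each of these superalgebras that splits cleanly into two classes: even generators whose coproducts live in the even tensor square, together with odd primitive generators.

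First I would use the explicit ordered-monomial basis of $Dist(G)=K\otimes_{\mathbb{Z}}U_{\mathbb{Z}}(gl(m|n))$ recalled just above the statement. This shows that $Dist(G)$ is generated by the divided powers $e_{ij}^{(t)}$ with $|e_{ij}|=0$, the binomials $\binom{e_{ii}}{s}$, and the odd elements $e_{ij}$ with $|e_{ij}|=1$. The first two families all sit inside $Dist(G_{ev})$, so they are homogeneous of parity $0$. By the displayed containment $\Delta_{Dist(G)}(Dist(G_{ev}))\subseteq Dist(G_{ev})\otimes Dist(G_{ev})$, and since $G_{ev}$ is a purely even algebraic group so that $Dist(G_{ev})\subseteq Dist(G)_0$, we obtain $\Delta(x)\in Dist(G)_0\otimes Dist(G)_0$ for every generator $x$ of the first two kinds.

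Next I would observe that the remaining odd generators $e_{ij}$ with $|e_{ij}|=1$ lie in $Lie(G)=Dist_1(G)^+$, and hence are primitive: $\Delta_{Dist(G)}(e_{ij})=e_{ij}\otimes 1+1\otimes e_{ij}$. This checks all hypotheses of Lemma \ref{converseinclusion}, giving $Z(Dist(G))=Dist(G)^{{\bf ad}(Dist(G))}$. For the second half of the statement I would run the same argument with the restricted versions of these generators, using the analogous containment $\Delta_{Dist(G_r)}(Dist(G_{ev,r}))\subseteq Dist(G_{ev,r})\otimes Dist(G_{ev,r})$ explicitly noted in the excerpt; the odd primitives $e_{ij}$ with $|e_{ij}|=1$ still belong to $Dist(G_r)$ since they lie in $Dist_1(G)\subseteq Dist(G_r)$.

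I do not anticipate a genuine obstacle here: the lemma is essentially a bookkeeping check that the standard Kostant-type generators of $Dist(GL(m|n))$ and its Frobenius kernels have the split shape that Lemma \ref{converseinclusion} requires. The only mild subtlety is to remark that the even generators can be chosen inside $Dist(G_{ev})$, so that the hypothesis $\Delta(x)\in A_0\otimes A_0$ follows from the stronger statement $\Delta(x)\in Dist(G_{ev})\otimes Dist(G_{ev})$ already recorded.
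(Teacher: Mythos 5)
Your proposal is correct and follows exactly the paper's route: the paper likewise deduces the lemma from Lemma \ref{converseinclusion}, using the containments $\Delta_{Dist(G)}(Dist(G_{ev}))\subseteq Dist(G_{ev})\otimes Dist(G_{ev})$ and $\Delta_{Dist(G_r)}(Dist(G_{ev,r}))\subseteq Dist(G_{ev,r})\otimes Dist(G_{ev,r})$ for the even generators and the primitivity of the odd $e_{ij}\in Lie(G)\subseteq Dist(G_r)$. Your write-up merely spells out the bookkeeping the paper leaves implicit.
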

\begin{lm}\label{basis}
The elements 
\[\prod_{|t_{ij}|=1}t_{ij}^{k_{ij}}\prod_{1\leq i\leq m+n}
t_{ii}^{l_i}
\prod_{|t_{ij}|=0}t_{ij}^{r_{ij}}\]
such that $k_{ij}\in\{0, 1\}$ and $0\leq l_i, r_{ij}\leq p^r -1$ form a basis of $K[G_r]$.

The elements
\[\prod_{|e_{ij}|=1}e_{ij}^{a_{ij}}\prod_{1\leq i\leq m+n}\left(\begin{array}{c}
e_{ii} \\
s_i
\end{array}\right)\prod_{|e_{ij}|=0}e_{ij}^{(d_{ij})}\]
such that $a_{ij}\in\{0, 1\}$ and $0\leq s_i, d_{ij}\leq p^r -1,$ form a basis of $Dist(G_r)$.

The second statement remains valid if we arbitrarily change the order of the appearing factors.
\end{lm}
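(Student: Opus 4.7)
First, I would derive the first statement directly from Remark \ref{basesforFrob}. Because $G_{ev}=GL(m)\times GL(n)$ is reduced, the remark applies with the basis $\{t_{ij}=c_{ij}-\delta_{ij}\}$ of $\mathfrak m/\mathfrak m^2$, yielding a basis of $K[G_r]$ consisting of monomials in the $\bar t_{ij}$ in any fixed order, with exponents $0\le a\le p^r-1$ in the even slots and $a\in\{0,1\}$ in the odd slots. Supercommutativity of $K[G]$ implies that reordering factors introduces only signs, so the specific order displayed in the lemma also yields a basis.

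For the second statement, my plan is to combine a dimension count with the PBW basis of $Dist(G)$. By Lemma 3.1 of \cite{brunkuj}, the displayed PBW-type monomials (without exponent bounds) form a $K$-basis of $Dist(G)$, so in particular the subset with the prescribed bounds is linearly independent in $Dist(G)$. To conclude that it is a basis of $Dist(G_r)$ it suffices to show that (i) each such bounded monomial annihilates $I_{G_r}=\sum_{f\in\mathfrak m_0}K[G]f^{p^r}$, and (ii) the cardinality matches $\dim K[G_r]=\dim Dist(G_r)$. Assertion (ii) is immediate from the first statement, while (i) is the super-analogue of Jantzen I.9.6: expanding through the coproduct, the Leibniz rule for divided powers reduces the check to the classical fact that $e_{ij}^{(b)}(t_{ij}^{p^r})=\binom{p^r}{b}=0$ in characteristic $p$ for $0<b<p^r$, together with the obvious statement that a generator $e_{ij}$ pairs trivially with $t_{kl}^{p^r}$ when $(i,j)\ne(k,l)$.

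For the third assertion, I would apply a PBW-type straightening. For any two displayed generators $X,Y$, the super-commutation relation $XY=(-1)^{|X||Y|}YX+R(X,Y)$ holds with $R(X,Y)$ a $\mathbb Z$-linear combination of monomials of strictly smaller total degree in a fixed lexicographic order on multi-indices; this is a standard computation in the Kostant $\mathbb Z$-form $U_{\mathbb Z}(gl(m|n))$, and it respects the odd constraint $e_{ij}^2=0$ as well as the divided-power collision formula $e_{ij}^{(a)}e_{ij}^{(b)}=\binom{a+b}{a}e_{ij}^{(a+b)}$, which preserves the $p^r$-bounded range. Iterating the straightening shows that any reordering of the displayed product spans the same subspace as the standard one; since cardinalities match, any other ordering also produces a basis.

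The main obstacle, I expect, is assertion (i) of step two: tracking the action of a PBW monomial on $gf^{p^r}$ through the iterated coproduct and Leibniz-like formulas in the super setting requires meticulous sign bookkeeping. Once this is in place, the dimension count from the first statement closes the second assertion, and the straightening argument in the third step is then routine.
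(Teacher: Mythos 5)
Your proposal is correct and follows essentially the same route as the paper: the first claim is read off from Remark \ref{basesforFrob}, the second from the Brundan--Kujawa PBW basis of $U_{\mathbb Z}(gl(m|n))$ together with a comparison of dimensions with $\dim K[G_r]$, and the reordering claim from the fact that \cite{brunkuj} provides a PBW basis for any ordering of the factors. The only difference is one of packaging: where the paper asserts that the bounded monomials span $Dist(G_r)=K[G_r]^*$ and cites Lemma 3.1 and Theorem 3.2 of \cite{brunkuj} for the reordering, you instead verify linear independence plus containment in $Dist(G_r)$ (via the vanishing on $\sum_{f\in\mathfrak m_0}K[G]f^{p^r}$, the super analogue of Jantzen I.9.6) and sketch the straightening argument underlying the cited theorem -- both are standard and close the same gaps.
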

\begin{proof}
Since $G_{ev}$ is reduced, the first statement follows from Remark \ref{basesforFrob}. 
Additionally, $Dist(G_r)=K[G_r]^*$ and $Dist(G_r)$ is generated (as a vector superspace) by the above products. 
Comparison of dimensions implies the second statement.
The final part follows from Lemma 3.1 and Theorem 3.2 of \cite{brunkuj}.
\end{proof}
\begin{rem}\label{otherbasis}
The elements \[\prod_{|c_{ij}|=1}c_{ij}^{k_{ij}}\prod_{1\leq i\leq m+n}
c_{ii}^{l_i}
\prod_{|c_{ij}|=0}c_{ij}^{r_{ij}}\]
also form a basis of the 
superalgebra $K[G_r]$, subject to the same restrictions on $k_{ij}, l_i, r_{ij}$.
\end{rem}
In what follows the products
$$\prod_{1\leq i\leq m+n}\left(\begin{array}{c}
e_{ii}-a_i \\
s_i
\end{array}\right) \ \mbox{and} \prod_{1\leq i\leq m+n}c_{ii}^{l_i},$$ 
where 
$a=(a_1,\ldots, a_{m+n})$ is a vector with integer coordinates and 
$s=(s_1,\ldots , s_{m+n})$ and $l=(l_1, \ldots , l_{m+n})$ are vectors with non-negative integer coordinates,
will be denoted by $\left(\begin{array}{c}
e-a \\
s
\end{array}\right)$ and $c^{l}$ respectively. Let $|s|$ denote the sum $\sum_{1\leq i\leq m+n} s_i$.

Following page 64 of \cite{hab}, denote 
\[\Delta^{(r)}_{T, a}=\sum_{s, \ 0\leq s_1, \ldots, s_{m+n}\leq p^r -1}(-1)^{|s|}
\left(\begin{array}{c}
e -a\\
s
\end{array}\right).\]

\begin{lm}\label{overeven}
For every $1\leq k, l, s\leq m+n$ and $t\geq 1$ we have
\[[e_{kl}^{(t)}, e_{ss}]=t(\delta_{ls}-\delta_{ks})e_{kl}^{(t)}.\]
\end{lm}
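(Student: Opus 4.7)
The plan is to reduce the identity to the base case $t=1$ and then obtain the general case by exploiting the derivation property of $\mathbf{ad}(e_{ss})$. Since $e_{ss}$ is always even, the ordinary commutator $[\,\cdot\,,e_{ss}]$ coincides with the super-bracket and acts as an ordinary (non-super) derivation on $U_{\mathbb{Q}}(gl(m|n))$, with no sign corrections to worry about.

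First I would verify the base case $t=1$: $[e_{kl},e_{ss}]=(\delta_{ls}-\delta_{ks})e_{kl}$. This is the classical matrix-unit relation in $gl(m|n)$; using $|e_{ss}|=0$ it reduces to $\delta_{ls}e_{ks}-\delta_{ks}e_{sl}$, and a short case analysis on whether $l=s$, $k=s$, both, or neither shows that the right-hand side equals $(\delta_{ls}-\delta_{ks})e_{kl}$ in each case.

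Next, for the even case $|e_{kl}|=0$, I would work in $U_{\mathbb{Q}}(gl(m|n))$, where $e_{kl}^{(t)}=e_{kl}^{t}/t!$. The telescoping identity
\[
[e_{kl}^{t},e_{ss}]=\sum_{i=0}^{t-1} e_{kl}^{i}[e_{kl},e_{ss}]e_{kl}^{t-1-i}
\]
combined with the base case (which makes $[e_{kl},e_{ss}]$ a scalar multiple of $e_{kl}$, hence commuting with every $e_{kl}^{j}$) yields $[e_{kl}^{t},e_{ss}]=t(\delta_{ls}-\delta_{ks})e_{kl}^{t}$. Dividing by $t!$ gives the claim in $U_{\mathbb{Q}}$, and because both sides lie in the $\mathbb{Z}$-form $U_{\mathbb{Z}}(gl(m|n))$ they remain equal after base change to $K$, hence in $Dist(G)=K\otimes_{\mathbb{Z}}U_{\mathbb{Z}}(gl(m|n))$.

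For the odd case $|e_{kl}|=1$, the convention recalled before Lemma \ref{basis} restricts the exponent to $t=0,1$, so only $t=1$ needs to be checked and this is exactly the base case. I do not expect a real obstacle here; the only point requiring mild care is to note that $e_{ss}$ being even is what allows the Leibniz rule for ordinary commutators to be applied without sign factors, and that the identity, proved over $\mathbb{Q}$, indeed descends cleanly because both sides are integral.
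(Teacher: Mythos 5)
Your proof is correct and follows essentially the same route as the paper: the matrix-unit relation $[e_{kl},e_{ss}]=(\delta_{ls}-\delta_{ks})e_{kl}$ combined with the telescoping Leibniz expansion of $[e_{kl}^{t},e_{ss}]$ in $U_{\mathbb{Q}}(gl(m|n))$, then division by $t!$. Your added remarks on integrality and on the odd case $t\le 1$ are fine but just make explicit what the paper leaves implicit.
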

\begin{proof}
Working in the superring $U_{\mathbb{Z}}(gl(m|n))$ we see that
$$[e_{kl}^{(t)}, e_{ss}]=\frac{1}{t!}\sum_{0\leq s\leq t-1}e_{kl}^s [e_{kl}, e_{ss}]e_{kl}^{t-s-1}.$$
Since $[e_{kl}, e_{ss}]=(\delta_{ls}-\delta_{ks})e_{kl}$, our statement follows.
\end{proof}
\begin{lm}\label{oddovertorus}
For every $i, j, k$ and $t\geq 1$ we have
\[\left(\begin{array}{c}
e_{kk} \\
s
\end{array}\right)e_{ij}^{(t)}=e_{ij}^{(t)}\left(\begin{array}{c}
e_{kk}+t(\delta_{ki}-\delta_{kj}) \\
s
\end{array}\right).\]
\end{lm}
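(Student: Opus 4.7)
The plan is to reduce the binomial identity to the basic commutation relation from Lemma \ref{overeven}, via an elementary induction on $s$.

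First I would use Lemma \ref{overeven} directly: since $e_{kk}$ is even, the super-commutator coincides with the ordinary commutator, and Lemma \ref{overeven} (applied with the roles $k\leftrightarrow i$, $l\leftrightarrow j$, $s\leftrightarrow k$) gives
\[
[e_{ij}^{(t)}, e_{kk}] = t(\delta_{jk}-\delta_{ik})e_{ij}^{(t)},
\]
which rearranges to
\[
e_{kk}\,e_{ij}^{(t)} = e_{ij}^{(t)}\bigl(e_{kk}+t(\delta_{ik}-\delta_{jk})\bigr).
\]
Setting $\alpha = t(\delta_{ki}-\delta_{kj})$, this says that moving $e_{ij}^{(t)}$ past $e_{kk}$ shifts $e_{kk}$ by the scalar $\alpha$. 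Because $\alpha$ is central, the same relation holds with $e_{kk}$ replaced by $e_{kk}-c$ for any integer $c$:
\[
(e_{kk}-c)\,e_{ij}^{(t)} = e_{ij}^{(t)}\bigl((e_{kk}+\alpha)-c\bigr).
\]

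Next I would iterate this one factor at a time through the product
\[
e_{kk}(e_{kk}-1)\cdots(e_{kk}-s+1),
\]
which appears in the numerator of $\binom{e_{kk}}{s}$. After pushing $e_{ij}^{(t)}$ to the left past all $s$ factors, each factor $e_{kk}-c$ becomes $(e_{kk}+\alpha)-c$, producing
\[
e_{kk}(e_{kk}-1)\cdots(e_{kk}-s+1)\,e_{ij}^{(t)}
= e_{ij}^{(t)}\,(e_{kk}+\alpha)(e_{kk}+\alpha-1)\cdots(e_{kk}+\alpha-s+1).
\]
Dividing by $s!$ inside $U_{\mathbb{Q}}(gl(m|n))$ (where division is permissible, while the end result lies in $U_{\mathbb{Z}}(gl(m|n))$ since both sides are elements of the divided-power basis of Lemma \ref{basis}) yields exactly
\[
\binom{e_{kk}}{s}e_{ij}^{(t)} = e_{ij}^{(t)}\binom{e_{kk}+t(\delta_{ki}-\delta_{kj})}{s},
\]
as desired.

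The only subtle point is the parity case when $|e_{ij}|=1$, where $t=1$ and $e_{ij}^{(t)}=e_{ij}$ is odd; but since $e_{kk}$ is even, the super-commutator $[e_{ij},e_{kk}]$ still equals the ordinary commutator, and no sign corrections appear in the pass-through. So this case is handled by the same calculation, and no genuine obstacle remains — the argument is purely a telescoped application of Lemma \ref{overeven}.
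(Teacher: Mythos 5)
Your proof is correct and is essentially the paper's argument: the paper's one-line proof likewise starts from the single-factor relation $(e_{kk}-a)e_{ij}=e_{ij}(e_{kk}+\delta_{ki}-\delta_{kj}-a)$ and telescopes it through the numerator of the binomial coefficient. Your only cosmetic difference is invoking Lemma \ref{overeven} to absorb the divided power $e_{ij}^{(t)}$ in one step before iterating over the $s$ factors, which changes nothing of substance.
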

\begin{proof}
Since 
$(e_{kk}-a)e_{ij}=e_{ij}(e_{kk}+\delta_{ki}-\delta_{kj}-a)$ for any $a\in \mathbb{Z}$, the statement follows.
\end{proof}
The supergroup $U^+$ has a series of normal subsupergroups
\[1\leq U^+(m+n-1)\leq\ldots\leq U^+(1)=U^+\]
such that $K[U^+(d)]=K[U^+]/I^+_d$, where an Hopf superideal $I^+_d$ is generated by the elements $c_{ij}$ with $1\leq j-i < d$.  
Symmetrically, $U^-$ has a series of normal subsupergroups
$$1\leq U^-(m+n-1)\leq\ldots\leq U^-(1)=U^-$$
such that $K[U^-(d)]=K[U^-]/I^-_d$, where an Hopf superideal $I^-_d$ is generated by the elements $c_{ij}$ with $1\leq i-j < d$. Additionally, each Frobenius kernel $U^{\pm}_r$ has a similar series of normal subsupergroups $U^{\pm}_r(d)=U^{\pm}(d)\bigcap G_r$. 

Let $R$ be an algebraic supergroup such that $R=U\rtimes M$ is a semi-direct product of its normal finite subsupergroup $U$ and purely even subsupergroup $M$. Let $\nu$ be a right integral on $Dist(U)$ and $\mu$ be a right integral on $Dist(M)$.
Since $Dist(U)$ is normal in $Dist(R)$, we can use Lemma \ref{integralsareinvariant} to derive that
${\bf ad}(x)\nu=x(\rho)\nu,$ for $x\in Dist(R)$ and an appropriate character $\rho\in X(R)$.
\begin{lm}\label{semidirect}
If any character of $R$ is trivial on $U$, then $(\mu\nu)\rho^{-1}=(\mu\rho^{-1})\nu=\nu\mu$ is an right integral on $Dist(R)$. 
\end{lm}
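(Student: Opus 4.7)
The plan is to work in the smash product decomposition $Dist(R)=Dist(U)\#Dist(M)$ coming from the semidirect product $R=U\rtimes M$, in which every element of $Dist(R)$ is a finite sum of products $um$ with $u\in Dist(U)$ and $m\in Dist(M)$ subject to the commutation rule $mu=\sum {\bf ad}(m_1)(u)\, m_2$. The defining identity ${\bf ad}(x)\nu=x(\rho)\nu$ of the modular character $\rho$ will be applied throughout.

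First I would verify directly that $\nu\mu$ is a right integral on $Dist(R)$. Since $Dist(U)$ is normal in $Dist(R)$ by Corollary \ref{normalHopf}, each ${\bf ad}(\mu_1)(u)$ lies in $Dist(U)$ for $u\in Dist(U)$, so the right-integrality of $\nu$ on $Dist(U)$ gives
\[(\nu\mu)u=\nu\sum{\bf ad}(\mu_1)(u)\mu_2=\sum \epsilon(\mu_1)\epsilon(u)\nu\mu_2=\epsilon(u)\nu\mu.\]
The right-integrality of $\mu$ on $Dist(M)$ then yields $(\nu\mu)m=\epsilon(m)\nu\mu$ for $m\in Dist(M)$. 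Combining these on a general element $um\in Dist(R)$ completes this step.

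Next, for the chain of equalities, I would use that for any $\eta\in Dist(M)$ the commutation rule together with ${\bf ad}(\eta_1)(\nu)=\eta_1(\rho)\nu$ gives $\eta\nu=\nu\phi_\rho(\eta)$, where $\phi_\rho(\eta):=\sum\eta_1(\rho)\eta_2$. Coassociativity together with $\rho\rho^{-1}=1$ shows that $\phi_\rho$ is an automorphism of $Dist(M)$ with inverse $\phi_{\rho^{-1}}$. The right $K[R]$-action $(\xi\alpha)(f)=\xi(\alpha f)$ introduced before Lemma \ref{equalities} unpacks as $\mu\rho^{-1}=\phi_{\rho^{-1}}(\mu)$, whence
\[(\mu\rho^{-1})\nu=\nu\,\phi_\rho(\phi_{\rho^{-1}}(\mu))=\nu\mu.\]
For the remaining equality $(\mu\nu)\rho^{-1}=(\mu\rho^{-1})\nu$, I would expand using $\Delta(\mu\nu)=\sum\mu_1\nu_1\otimes\mu_2\nu_2$ and the group-likeness of $\rho^{-1}$ to get
\[(\mu\nu)\rho^{-1}=\sum \mu_1(\rho^{-1})\,\nu_1(\rho^{-1})\,\mu_2\nu_2.\]
The hypothesis enters precisely here: since every character of $R$ is trivial on $U$, we have $\rho^{-1}|_U=1$, hence $\nu_1(\rho^{-1})=\epsilon(\nu_1)$ for $\nu_1\in Dist(U)$, and the $\nu$-factor collapses via $\sum\epsilon(\nu_1)\nu_2=\nu$.

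The main obstacle I expect is the careful handling of the notation $\xi\alpha$ for $\xi\in Dist(R)$ and $\alpha\in K[R]$, including verifying that the result stays in $Dist(R)$, and tracking super signs when rearranging factors. Fortunately, $\rho^{-1}$ is even (being group-like), and the vanishing of $\epsilon(\nu_1)$ unless $\nu_1$ is even forces the critical signs to be trivial, so the super-sign bookkeeping does not cause any essential difficulty.
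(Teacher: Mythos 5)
Your proof is correct and follows essentially the same route as the paper's: both rest on the decomposition $Dist(R)=Dist(U)Dist(M)$, on the semi-invariance ${\bf ad}(x)\nu=x(\rho)\nu$ yielding the commutation rule $\eta\nu=\nu(\eta\rho)$ for $\eta\in Dist(M)$, and on the multiplicativity of the twist by the group-like $\rho^{-1}$ together with its triviality on $Dist(U)$. The only (harmless) difference is organizational: you verify the right-integral property directly on $\nu\mu$ using normality of $Dist(U)$, whereas the paper verifies it on $(\mu\nu)\rho^{-1}$ and lets the chain of equalities transfer it.
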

\begin{proof}
Observe that $Dist(R)=Dist(U)Dist(M)$ and since $\rho$ is trivial on $U$, we have $\rho\in K[M]\subseteq K[R]$. Since $\rho$ is a group-like element, 
$(xy)\rho^k=(x\rho^k) (y\rho^k)$ for every $x, y\in Dist(R)$ and $k\in\mathbb{Z}$. Additionally, $y\rho^k=y$ for every $y\in Dist(U)$. In fact, for every $f\in K[R]$ we have 
\[y\rho^k(f)=y(\rho^k f)=\sum y_1(\rho^k)y_2(f)=\sum y_1(1)y_2(f)=y(f).\]
From the definition of comultiplication in the algebra of distributions it follows that 
\[x=\sum x_1(\rho) (x_2\rho^{-1})\]
and 
\[\sum s_M(x_1)(x_2\rho^{-1})=x(\rho^{-1})\] for every $x\in Dist(M)$. Thus
\[\nu x=\sum x_1(\rho)\nu (x_2\rho^{-1})=\sum x_1\nu s_M(x_2)(x_3\rho^{-1})=(\sum x_1 x_2(\rho^{-1}))\nu=(x\rho^{-1})\nu .\]
Finally, we have 
\[(\mu\nu)\rho^{-1}(yx)=(\mu\nu (y\rho) (x\rho))\rho^{-1}=x(1)y(\rho)(\mu\nu)\rho^{-1}=
(xy)(1)(\mu\nu)\rho^{-1}.\]
\end{proof}
\begin{pr}\label{someintegralforunip}
Any element
$$\prod_{|e_{ij}|=1, j-i\geq d}e_{ij} \prod_{|e_{ij}|=0, j-i\geq d}e_{ij}^{(p^r-1)}=\prod_{|e_{ij}|=0, j-i\geq d}e_{ij}^{(p^r-1)}\prod_{|e_{ij}|=1, j-i\geq d}e_{ij},$$ 
where the products $\prod_{|e_{ij}|=1, j-i\geq d}e_{ij}$ and $\prod_{|e_{ij}|=0, j-i\geq d}e_{ij}^{(p^r-1)}$ can be taken in any order, is a two-sided integral on $Dist(U_r^+(d))$. Moreover, it is a central element in $Dist(U^+)$.
\end{pr}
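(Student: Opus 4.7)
My plan is to exhibit $U^+_r(d)$ as a semi-direct product to which Lemma \ref{semidirect} applies, and then to transfer the resulting integral to a central element of $Dist(U^+)$ via Lemma \ref{equalities} and Lemma \ref{converseinclusion}.

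First, I will identify $V^+(d) \subset U^+_r(d)$ as the subsupergroup of matrices $I + N_{01}$ with $N_{01}$ supported in positions $(i,j)$ having $i \leq m < j$ and $j - i \geq d$. A direct matrix computation shows that $V^+(d)$ is abelian (the product of two such matrices collapses to the sum of their $N_{01}$'s), its coordinate superalgebra is the exterior algebra on the associated odd generators, and $V^+(d)$ is normal in $U^+_r(d)$ with decomposition $U^+_r(d) = V^+(d) \rtimes U^+_{ev,r}(d)$, where $U^+_{ev,r}(d) := U^+_{ev}(d) \cap G_r$ is purely even. Consequently $Dist(V^+(d))$ is a Grassmann superalgebra on the odd primitives $e_{ij}$, and its top wedge $\nu_V = \prod_{|e_{ij}|=1,\, j-i\geq d} e_{ij}$ is a two-sided integral. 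On the even side, the Jantzen-Haboush argument used in Proposition \ref{berinfinitesimal} gives $\mu = \prod_{|e_{ij}|=0,\, j-i\geq d} e_{ij}^{(p^r-1)}$ as a two-sided integral on $Dist(U^+_{ev,r}(d))$; the ordering of its factors is immaterial because commutator corrections produce terms of the form $e_{ab}^{(p^r-1)} e_{ab}^{(t)} = \binom{p^r-1+t}{t}\, e_{ab}^{(p^r-1+t)}$ with $\binom{p^r-1+t}{t} \equiv 0 \pmod p$ for $1 \leq t \leq p^r-1$ by Lucas's theorem. Since $U^+_r(d)$ is infinitesimal unipotent we have $X(U^+_r(d)) = 1$, so the character $\rho$ of Lemma \ref{semidirect} is trivial; the lemma then produces $\mu \nu_V = \nu_V \mu$ as a right integral on $Dist(U^+_r(d))$, matching the stated formula. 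An analog of Lemma \ref{integralascentral} (the Berezinian and even determinant of the adjoint action of a unipotent element are $1$) forces the integral-defining character of $U^+_r(d)$ to be trivial, so right and left integrals coincide and $\nu$ is two-sided.

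For centrality in $Dist(U^+)$, I use that $U^+_r(d)$ is normal in $U^+$ (as the intersection of the normal subsupergroups $U^+(d)$ and $U^+_r$) and infinitesimal. Since $X(U^+) = 1$, Lemma \ref{equalities} applied with $G = U^+$, $R = U^+_r(d)$, $\chi = 1$ gives the isomorphism $K[U^+_r(d)]^{U^+} \xrightarrow{\sim} Dist(U^+_r(d))^{U^+}$ via $f \mapsto \nu f$. The element $1 \in K[U^+_r(d)]^{U^+}$ maps to $\nu$, hence $\nu \in Dist(U^+)^{U^+}$. Because $U^+$ is connected, Corollary \ref{inv} identifies this invariant subspace with $Dist(U^+)^{{\bf ad}(Dist(U^+))}$, and Lemma \ref{converseinclusion} applies to $Dist(U^+)$ (which is generated by even divided powers $e_{ij}^{(t)}$, whose coproducts lie in $A_0 \otimes A_0$, together with odd primitives $e_{ij}$), yielding $Dist(U^+)^{{\bf ad}(Dist(U^+))} = Z(Dist(U^+))$. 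Therefore $\nu \in Z(Dist(U^+))$.

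The main obstacle is the structural first step: verifying that $V^+(d)$ is normal in $U^+_r(d)$ and that the coordinate superalgebra of $U^+_r(d)$ factors as $K[V^+(d)] \otimes K[U^+_{ev,r}(d)]$. This requires careful control of the conjugation action on the upper-right odd block together with preservation of the condition $j - i \geq d$ under commutator corrections. Once this is in place, the integral and centrality assertions follow from the machinery of the preceding sections without further obstacles.
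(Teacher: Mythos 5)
Your proposal is correct and follows essentially the same route as the paper: the decomposition $U_r^+(d)=V^+(d)\rtimes U^+_{ev,r}(d)$, the Grassmann top product as the odd integral, Haboush's Proposition 6.7 for the even factor, Lemma \ref{semidirect} to assemble them into a right integral, and the triviality of characters of unipotent supergroups together with the adjoint-invariance machinery (Lemmas \ref{integralsareinvariant} and \ref{equalities}, Corollary \ref{inv}, Lemma \ref{converseinclusion}) for centrality and hence two-sidedness. The only wobble is your Lucas-theorem explanation of why the even factors can be reordered --- swapping $e_{ij}^{(s)}$ with $e_{kl}^{(t)}$ produces correction terms in other root vectors such as $e_{il}$, not higher divided powers $e_{ab}^{(p^r-1+t)}$ of a single root --- but the reordering claim is precisely the content of Haboush's Proposition 6.7, which the paper itself simply cites.
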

\begin{proof}
Let $V^+$ be the kernel of the natural epimorphism $U^+\to U^+_{ev}$. Then $U^+_r(d)=V^+(d)\rtimes U^+_{ev, r}(d)$, where $V^+(d)=V^+\bigcap U^+(d)$. 
In particular, $V^+(d)\unlhd U^+$ for every $d\geq 1$.
It is clear that $Dist(V^+(d))$ is generated by all odd primitive elements $e_{ij}$ with $j-i\geq d$. Since $V^+(d)$ is abelian, 
$Dist(V^+(d))$ is supercommutative. Therefore $\prod_{|e_{ij}|=1, j-i\geq d}e_{ij}$ is a two-sided integral on $Dist(V^+(d))$.
Since unipotent supergroups have only trivial characters, Lemma \ref{integralsareinvariant} implies that 
$\prod_{|e_{ij}|=1, j-i\geq d}e_{ij}$ is central in $Dist(U^+)$. Finally, 
Lemma \ref{semidirect} combined with Proposition 6.7 of \cite{hab} concludes the proof.
\end{proof}
\begin{pr}\label{symmversion}
Symmetrically, any element
$$\prod_{|e_{ij}|=1, i-j\geq d}e_{ij} \prod_{|e_{ij}|=0, i-j\geq d}e_{ij}^{(p^r-1)}=\prod_{|e_{ij}|=0, i-j\geq d}e_{ij}^{(p^r-1)}\prod_{|e_{ij}|=1, i-j\geq d}e_{ij},$$ 
where the products $\prod_{|e_{ij}|=1, i-j\geq d}e_{ij}$ and $\prod_{|e_{ij}|=0, i-j\geq d}e_{ij}^{(p^r-1)}$ can be taken in any order, is a two-sided integral on $Dist(U_r^-(d))$. Moreover, it is a central element in $Dist(U^-)$.
\end{pr}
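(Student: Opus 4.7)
The plan is to mirror the proof of Proposition \ref{someintegralforunip} by transposing the roles of the upper and lower triangular structures. First I would let $V^-$ denote the kernel of the natural epimorphism $U^-\to U^-_{ev}$ and set $V^-(d)=V^-\cap U^-(d)$, so that the semi-direct decomposition $U^-_r(d)=V^-(d)\rtimes U^-_{ev,r}(d)$ holds, with $V^-(d)$ normal in $U^-$. The distribution superalgebra $Dist(V^-(d))$ is generated by the odd primitive elements $e_{ij}$ with $i-j\geq d$, and since $V^-(d)$ is abelian, $Dist(V^-(d))$ is supercommutative. Consequently $\prod_{|e_{ij}|=1,\,i-j\geq d}e_{ij}$ is a two-sided integral on $Dist(V^-(d))$ (up to the usual sign from reordering odd factors).

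Next I would invoke Lemma \ref{integralsareinvariant}: since $V^-(d)$ is normal in $U^-$ and every character of the unipotent supergroup $U^-$ is trivial, the character $\rho$ appearing in that lemma is $1$, so the integral $\prod_{|e_{ij}|=1,\,i-j\geq d}e_{ij}$ is ${\bf ad}(U^-)$-invariant. Combined with Lemma \ref{converseinclusion}, applied to $Dist(U^-)$ (which is generated by odd primitives and even elements whose comultiplications land in $Dist(U^-)_0\otimes Dist(U^-)_0$), this gives that the odd product is central in $Dist(U^-)$. For the even part, I would cite Proposition 6.7 of \cite{hab}, in its symmetric (lower-triangular) form, to conclude that $\prod_{|e_{ij}|=0,\,i-j\geq d}e_{ij}^{(p^r-1)}$ is a two-sided integral on $Dist(U^-_{ev,r}(d))$. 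Then Lemma \ref{semidirect}, applied to the decomposition $U^-_r(d)=V^-(d)\rtimes U^-_{ev,r}(d)$ (which has only trivial characters), yields that the full product is a two-sided integral on $Dist(U^-_r(d))$.

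The one point requiring care is the asserted equality of the two orderings in the displayed formula. This reduces to the claim that $\prod_{|e_{ij}|=1,\,i-j\geq d}e_{ij}$ supercommutes (in the naive, ungraded sense, since it is even once $|V^-(d)_1|$ is even) with each divided power $e_{kl}^{(p^r-1)}$ for $|e_{kl}|=0$ and $k-l\geq d$. Using the action of the even part on the odd part within the semi-direct product, together with the symmetric analogues of Lemmas \ref{overeven} and \ref{oddovertorus}, one verifies that conjugation of each odd $e_{ij}$ by $e_{kl}^{(p^r-1)}$ adds only terms lying in the augmentation ideal of $Dist(V^-(d))$ of strictly higher degree than $\prod_{|e_{ij}|=1,\,i-j\geq d}e_{ij}$, so these commutator corrections vanish against the top odd product. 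I expect this commutation verification, rather than the overall structural argument, to be the only genuine obstacle; it is entirely parallel to the upper-triangular case and involves no new phenomenon beyond swapping the inequalities $j-i\geq d$ and $i-j\geq d$.
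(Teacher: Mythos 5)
Your proposal is correct and matches the paper's intended argument: the paper proves Proposition \ref{someintegralforunip} in exactly this way and states Proposition \ref{symmversion} as its mirror image, so transposing $V^+$, $U^+(d)$ to $V^-$, $U^-(d)$ (with Lemma \ref{integralsareinvariant}, triviality of characters of unipotent supergroups, Lemma \ref{semidirect} and the lower-triangular form of Proposition 6.7 of \cite{hab}) is all that is required. The only place you overcomplicate matters is the equality of the two orderings: once the odd product is known to be central in $Dist(U^-)$ (equivalently, once Lemma \ref{semidirect} with $\rho=1$ gives $\nu\mu=\mu\nu$), it commutes with the even divided powers for free, so no degree-counting commutator analysis is needed.
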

Let $u^+_0$ and $u^-_0$ denote $\prod_{|e_{ij}|=0, j > i}e_{ij}^{(p^r-1)}$ and $\prod_{|e_{ij}|=0, i > j}e_{ij}^{(p^r-1)}$
correspondingly. Besides, denote $\prod_{|e_{ij}|=1, j> i}e_{ij}$ and $\prod_{|e_{ij}|=1, i> j}e_{ij}$ by $u^+_1$ and $u^-_1$
correspondingly. 

Let $P^-$ be a parabolic subsupergroup of $G$ consisting of all matrices that have the right upper block of size $m\times n$ equal to zero matrix. Then $P^-\simeq V^-\rtimes G_{ev}$, where $V^-$ is the kernel of the natural epimorphism $U^-\to U^-_{ev}$. Moreover, $P_r^-\simeq V^-\rtimes G_{ev, r}$ for every $r\geq 1$.
Observe that $G_{ev}$ acts on $u^-_1$ via the character $\rho=\det(C_{11})^m\det(C_{00})^{-n}$ and on $u^+_1$ via the character $\rho^{-1}$. 

It follows from Corollary 6.10 of \cite{hab} that $\Delta_T^{(r)} u^-_0 u^+_0$ and $\Delta_T^{(r)} u^+_0 u^-_0$ are two-sided integrals on $Dist(G_{ev})$. 
We will show that $\Delta_T^{(r)} u^-_0 u^+_0=u^-_0 u^+_0\Delta_T^{(r)}$ and $\Delta_T^{(r)} u^+_0 u^-_0=u^+_0 u^-_0\Delta_T^{(r)}$.
In fact, if $Dist(T)$ acts on $u^-_0$ via a character $\chi$, then $Dist(T)$ acts on $u^+_0$ via $\chi^{-1}$.
By Lemma \ref{semidirect} we have
$$\Delta_T^{(r)} u^-_0 u^+_0=u^-_0(\Delta_T^{(r)}\chi)u^+_0=u^-_0 u^+_0(\Delta_T^{(r)}\chi)\chi^{-1}=
u^-_0 u^+_0\Delta_T^{(r)}$$
and analogously for the second equation. 

Symmetrically, one can consider the opposite parabolic subsupergroup $P^+$ of $G$ consisting of all matrices that have the left lower block of size $n\times m$ equal to zero matrix. 
Again, $P^+\simeq V^+\rtimes G_{ev}$ and $P^+_r\simeq V^+\rtimes G_{ev, r}$ for every $r\geq 1$.

\begin{pr}\label{forP}
The elements 
\[(u^-_0 u^+_0\Delta_T^{(r)})\rho^{-1} u^-_1=u^-_1 u^-_0 u^+_0\Delta_T^{(r)}\] and 
\[(u^+_0 u^-_0\Delta_T^{(r)})\rho^{-1} u^-_1=u^-_1 u^+_0 u^-_0\Delta_T^{(r)}\]
are right integrals on $Dist(P_r^-)$, and the elements 
\[(u^-_0 u^+_0\Delta_T^{(r)})\rho^{-1} u^+_1=u^+_1 u^-_0 u^+_0\Delta_T^{(r)}\] and 
\[(u^+_0 u^-_0\Delta_T^{(r)})\rho^{-1} u^+_1=u^+_1 u^+_0 u^-_0\Delta_T^{(r)}\]
are right integrals on $Dist(P^+_r)$. 
\end{pr}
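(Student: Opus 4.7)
The plan is to apply Lemma \ref{semidirect} to the semi-direct decompositions $P^-_r \simeq V^- \rtimes G_{ev,r}$ and $P^+_r \simeq V^+ \rtimes G_{ev,r}$ recorded just before the proposition. The four ingredients needed from that lemma are a right integral $\nu$ on $Dist(V^\pm)$, a right integral $\mu$ on $Dist(G_{ev,r})$, the character $\eta$ through which the adjoint action of $G_{ev}$ is realised on $\nu$, and the fact that every character of the ambient supergroup restricts trivially to $V^\pm$.

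For the first ingredient, I would observe that $V^-$ is abelian: its generators $e_{ij}$ with $j \leq m < i$ pairwise anticommute, since the super-brackets $[e_{ij}, e_{i'j'}] = \delta_{ji'} e_{ij'} + \delta_{j'i} e_{i'j}$ vanish identically in this index range. Consequently $Dist(V^-)$ is a finite Grassmann algebra, and its unique (up to scalar) two-sided integral is the top monomial $u^-_1$; symmetrically $u^+_1$ is the integral on $Dist(V^+)$. For the second ingredient, Corollary 6.10 of \cite{hab} applied to the purely even reductive group $G_{ev}$ supplies the two-sided integrals $\Delta_T^{(r)} u^\pm_0 u^\mp_0$ on $Dist(G_{ev,r})$, which have already been rewritten as $u^\pm_0 u^\mp_0 \Delta_T^{(r)}$ in the paragraph preceding the proposition. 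For the third, the relevant characters $\rho$ and $\rho^{-1}$ are read off directly from the excerpt. For the fourth, $V^\pm$ has Grassmann coordinate superalgebra whose only group-like element is $1$, so $X(V^\pm) = 1$ and the triviality hypothesis is automatic.

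With the hypotheses of Lemma \ref{semidirect} verified, it delivers, for each pair $(\mu, \nu)$, the right integral $(\mu\nu)\eta^{-1} = (\mu\eta^{-1})\nu = \nu\mu$ on $Dist(P^\pm_r)$. Substituting the two explicit choices for $\mu$ and the two explicit choices for $\nu$ (one in each parabolic) produces the four displayed equalities and integral assertions: the two for $P^-_r$ use $\nu = u^-_1$ with $\eta = \rho$, and the two for $P^+_r$ use $\nu = u^+_1$ with $\eta = \rho^{-1}$. The intermediate equality $(\mu\nu)\eta^{-1} = (\mu\eta^{-1})\nu$ is the passage of the $\eta^{-1}$-twist past the $V^\pm$-integral, which is legitimate because $\nu\eta^k = \nu$ for $\nu \in Dist(V^\pm)$, an identity already derived in the proof of Lemma \ref{semidirect}.

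The bulk of the work is packaged into Lemma \ref{semidirect} and Haboush's Corollary 6.10, so no serious conceptual obstacle is anticipated. The most delicate part is the $\rho$-bookkeeping: correctly matching the character $\eta$ to each choice of $\nu$ and transcribing the conclusion of Lemma \ref{semidirect} into the specific sandwiched form appearing in the statement.
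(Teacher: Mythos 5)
Your proposal is correct and is precisely the argument the paper intends: the proposition is stated without a separate proof because all four claims follow from Lemma \ref{semidirect} applied to $P^{\pm}_r\simeq V^{\pm}\rtimes G_{ev,r}$ with $\nu=u^{\pm}_1$ and $\mu$ one of Haboush's two-sided integrals $u^{\mp}_0u^{\pm}_0\Delta_T^{(r)}$ on $Dist(G_{ev,r})$, exactly as you describe. One bookkeeping remark: with your (correct) identification $\eta=\rho^{-1}$ for $u^+_1$, Lemma \ref{semidirect} literally yields $(\mu\rho)u^+_1=u^+_1\mu$, so the exponent of $\rho$ in the paper's third and fourth displays appears to be a typo rather than a defect of your argument; the right-hand sides, which are what is actually used in Theorem \ref{whatintegralis}, are unaffected.
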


Set $W_i=\sum_{m+n-i+1\leq s\leq m+n} Kw_s$ for $1\leq i\leq m+n, S=Kw_m+Kw_{m+1}$ and $L=\sum_{i\neq m, m+1}Kw_i$. The stabilizer of the flag
$$W_1\subseteq\ldots\subseteq W_{n-1}\subseteq W_{n+1}\subseteq\ldots\subseteq W_{m+n}=W$$
is denoted by $Q$. It is easy to see that $Q\simeq U\rtimes H$, where $U$ is the largest subsupergroup of $U^-$ whose elements act trivially on $W_{n+1}/W_{n-1}$, and 
$H\simeq R\times T'$, where 
\[T'(C)=\{t\in T(C); t|_{S\otimes C}=id_{S\otimes C}\}\]
and  
\[R(C)=\{g\in G(C); g|_{L\otimes C}=id_{L\otimes C}\}\]
for $C\in\mathsf{SAlg}_K$.
It is clear that $R\simeq GL(1|1)$ and $U^-\leq Q$. Finally, for every $r\geq 1$ we have $Q_r\simeq U_r\rtimes H_r$.

Arguing as in Proposition \ref{someintegralforunip} we see that the element
$$x=\prod_{|e_{ij}|=1, i> j, e_{ij}\neq e_{m+1, m}}e_{ij}\prod_{|e_{ij}|=0, i> j}e_{ij}^{(p^r-1)}=$$$$\prod_{|e_{ij}|=0, i> j}e_{ij}^{(p^r-1)}\prod_{|e_{ij}|=1, i> j, e_{ij}\neq e_{m+1, m}}e_{ij}$$
is a two-sided integral on $Dist(U_r)$. Since $U_r\unlhd Q$, $x$ (super)commutes with $Dist(U^-)$ and $e_{m, m+1}$. 
Indeed, there is a character $\chi\in X(Q)=X(H)$ such that ${\bf ad}(e_{m,m+1})x$ $=e_{m, m+1}(\chi)x$. Since
$e_{m, m+1}(\chi)=e_{m, m+1}(1)=0$, we have ${\bf ad}(e_{m,m+1})x=e_{m, m+1}x-(-1)^{|x|}xe_{m, m+1}=0$.
\begin{rem}\label{aboutprimitives}
The last statement can be generalized as follows. If $x$ is as above, $\chi$ is the character corresponding to $x$ and $y\in Lie(G)$, then $yx=x((-1)^{|x||y|}y+y(\chi))$.  
\end{rem}
\begin{tr}\label{whatintegralis}
The central element $\nu_r$ equals
$u^+_1 u^+_0 u^-_1 u^-_0\Delta_T^{(r)} =\Delta_T^{(r)} u^+_1 u^+_0 u^-_1 u^-_0$.
\end{tr}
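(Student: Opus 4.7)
My plan is to show that $\nu := u^+_1 u^+_0 u^-_1 u^-_0 \Delta_T^{(r)}$ is a non-zero right integral on $Dist(G_r)$. By the one-dimensionality of the space of right integrals, this forces $\nu$ to be a scalar multiple of $\nu_r$, and the scalar is $1$ under the paper's normalization (visible from the leading PBW coefficient). The two forms of $\nu$ displayed in the theorem are then the same element.

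The first task is the reordering identity $u^+_1 u^+_0 u^-_1 u^-_0 \Delta_T^{(r)} = \Delta_T^{(r)} u^+_1 u^+_0 u^-_1 u^-_0$. By Lemma \ref{oddovertorus}, commuting each factor $\binom{e_{kk}}{s_k}$ of $\Delta_T^{(r)}$ past a factor $e_{ij}$ or $e_{ij}^{(p^r-1)}$ only shifts the upper argument by an integer $\alpha_k$. Summing over $s_k$, the shift-invariance identity $\Delta^{(r)}_{T,a} = \Delta_T^{(r)}$ for every integer vector $a$ (a standard truncated binomial identity in characteristic $p$) absorbs the shift; iterating through every factor of $u^+_1 u^+_0 u^-_1 u^-_0$ yields the desired equality.

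The heart of the proof is then to verify $\nu y = \epsilon(y)\nu$ on the generators of $Dist(G_r)$ listed in Lemma \ref{basis}, split into three cases. For $y \in Dist(U^-_r)$, use the reordered form $\nu = \Delta_T^{(r)} u^+_1 u^+_0 (u^-_1 u^-_0)$ and apply Proposition \ref{symmversion}: $u^-_1 u^-_0$ is a right integral on $Dist(U^-_r)$. For $y = \binom{e_{kk}}{s_k}\in Dist(T_r)$, commute $y$ leftward past $u^-_1 u^-_0$ via Lemma \ref{oddovertorus} and then invoke the identity $\binom{e_{kk}+\alpha_k}{s_k}\Delta_T^{(r)} = \delta_{s_k,0}\Delta_T^{(r)}$, another direct binomial manipulation. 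For $y \in Dist(U^+_r)$, rewrite $\nu = \mu\, u^-_1$, where $\mu := u^+_1 u^+_0 u^-_0 \Delta_T^{(r)}$ is the right integral on $Dist(P^+_r)$ from Proposition \ref{forP}; this rewriting uses $u^-_1 u^-_0 = u^-_0 u^-_1$ together with $\Delta_T^{(r)} u^-_1 = u^-_1 \Delta_T^{(r)}$ (a special case of the reordering). Then $\nu y = \mu u^-_1 y = (-1)^{|u^-_1||y|}\mu y u^-_1 + \mu[u^-_1, y]$, and the first summand equals $(-1)^{|u^-_1||y|}\epsilon(y)\nu$ by the $Dist(P^+_r)$-integral property of $\mu$.

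The main obstacle is showing $\mu[u^-_1, y] = 0$ for $y \in Dist(U^+_r)$, since $V^-_r$ and $U^+_r$ do not commute. The supercommutator $[u^-_1, y]$ expands via the graded Leibniz rule into a sum of terms, each obtained from $u^-_1$ by replacing one factor $e_{a_j b_j}$ with the elementary bracket $[e_{a_j b_j}, y]$. A direct computation via the $gl(m|n)$-relations (and Lemma \ref{overeven} when $y$ is a divided power) shows that each such bracket lies in $Dist(V^-_r) \cdot Dist(G_{ev,r})$ with the $Dist(G_{ev,r}) \subseteq Dist(P^+_r)$ factor in $\ker\epsilon$ (it is a root vector $e_{al}$ or $e_{kb}$, respectively a divided power of positive degree). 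Reordering each term so that the $\ker\epsilon$ piece sits immediately to the right of $\mu$, the integral property of $\mu$ annihilates every summand, yielding $\mu[u^-_1, y] = 0$ and hence $\nu y = \epsilon(y)\nu$. Non-vanishing of $\nu$ follows immediately from its PBW expansion in the basis of Lemma \ref{basis}, where the top monomial has coefficient $\pm 1$.
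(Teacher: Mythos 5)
Your overall strategy --- show the displayed element is a right integral by testing it against generators of $Dist(U^-_r)$, $Dist(T_r)$ and $Dist(U^+_r)$ --- is the same as the paper's, and your first two cases are fine. But the third case rests on two identities that are false. First, $\Delta^{(r)}_{T,a}\neq\Delta^{(r)}_{T}$ for a general integer vector $a$: evaluating on $c^{\lambda}$ gives $\Delta^{(r)}_{T,a}(c^{\lambda})=\pm\prod_i\binom{\lambda_i-a_i-1}{p^r-1}$, which by Lucas' theorem is the ``delta distribution'' supported at $c^{a}$; it coincides with $\Delta^{(r)}_{T}$ only when $a\equiv 0\pmod{p^r}$. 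Second, and as a consequence, $\Delta^{(r)}_{T}u^-_1\neq u^-_1\Delta^{(r)}_{T}$: the torus acts on $u^-_1$ by the nontrivial character $\rho=\det(C_{11})^m\det(C_{00})^{-n}$, so $\Delta^{(r)}_{T}u^-_1=u^-_1\Delta^{(r)}_{T,a}$ with $a=(-n,\ldots,-n,m,\ldots,m)\not\equiv 0\pmod{p^r}$ in general (already for $GL(1|1)$ one gets the shift $(1,-1)$). The reordering of the \emph{full} product is true only because the characters attached to $u^+_1$ and $u^-_1$ (and to $u^+_0$ and $u^-_0$) are mutually inverse and cancel --- that is exactly how the paper proves it, via Lemma \ref{semidirect}; it does not specialize to individual factors. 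Hence your decomposition $\nu=\mu u^-_1$ is wrong, and the entire $Dist(U^+_r)$ case collapses.

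Even granting that decomposition, the step $\mu[u^-_1,y]=0$ is not established. In the Leibniz expansion, the bracket $[e_{a_jb_j},y]\in Dist(G_{ev,r})^+$ sits to the right of the surviving odd factors $e_{a_1b_1}\cdots e_{a_{j-1}b_{j-1}}$, which lie in $Dist(V^-_r)\not\subseteq Dist(P^+_r)$; commuting the even root vector leftward past them produces new commutator terms again lying in $Dist(V^-_r)$, which the integral property of $\mu$ does not annihilate. Concretely, in $GL(2|1)$ with $y=e_{13}$ one meets the summand $\mu e_{31}e_{12}=\mu e_{12}e_{31}+\mu e_{32}=\mu e_{32}$, which is not visibly zero. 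The paper avoids both problems: it writes $y=u^+_1\cdot\bigl(u^-_1u^+_0u^-_0\Delta^{(r)}_{T}\bigr)$ with the $Dist(P^-_r)$-integral of Proposition \ref{forP} kept intact on the right (so no commutation of $\Delta^{(r)}_{T}$ past a single odd block is needed), reduces the odd upper generators to $e_{m,m+1}$ via $e_{ij}=[e_{i,m+1},e_{m+1,j}]$ and $e_{ij}=[e_{im},e_{mj}]$, and then kills $ye_{m,m+1}$ by an explicit computation in which the \emph{shifted} element $\Delta^{(r)}_{T,a}$ with $a_m=-1$, $a_{m+1}=1$ genuinely appears and the identity $(e_{mm}+e_{m+1,m+1})\Delta^{(r)}_{T,a}=0$ is precisely where the shift matters. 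You would need to rebuild your third case along these or similar lines.
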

\begin{proof}
Denote the above product by $y$. The same arguments as in Proposition \ref{forP} imply the equation $u^+_1 u^+_0 u^-_1 u^-_0\Delta_T^{(r)} =\Delta_T^{(r)} u^+_1 u^+_0 u^-_1 u^-_0$. 
Moreover, $u^{\epsilon}_0$ commutes with $u^{\mu}_1$ for each 
$\epsilon, \mu\in\{\ -, +\}$. For example, $u^+_0 u^-_1 = u^-_1 (u^+_0\rho)=u^-_1 u^+_0$.

Thus $y=u^+_1 u^-_1 u^+_0 u^-_0\Delta_T^{(r)}$ and 
Proposition \ref{forP} implies that $yx=0$ for every $x\in Dist(P_r^- )^+$.
If $j > m+1$ or $i< m$, then $e_{ij}=[e_{i, m+1}, e_{m+1, j}]$ and $e_{ij}=[e_{i m}, e_{m j}]$ respectively. Using the arguments on page 68 of \cite{hab} it remains to consider only the case $i=m, j=m+1$ and we need to show that $ye_{m,m+1}=0$. 
Using Lemma \ref{oddovertorus} we infer that 
$\Delta_T^{(r)} e_{m, m+1}=e_{m, m+1}\Delta^{(r)}_{T, a}$, where $a_i=0$ for $i\neq m, m+1$, and $a_m=-1, a_{m+1}=1$. 
Thus $$ye_{m, m+1}=u^+_0 u^+_1 e_{m+1, m} x e_{m, m+1}\Delta^{(r)}_{T, a}=
(-1)^{|x|}u^+_0 u^+_1 e_{m+1, m}e_{m,m+1} x\Delta^{(r)}_{T, a}.$$
Using the equation $$e_{m+1, m}e_{m,m+1}=e_{m+1, m+1}+e_{mm}-e_{m, m+1}e_{m+1, m}$$ and Proposition \ref{someintegralforunip}, one can continue as
$$ye_{m, m+1}=(-1)^{|x|}u^+_0 u^+_1 (e_{m+1, m+1}+e_{mm}) x \Delta^{(r)}_{T, a}.$$
On the other hand, Lemma \ref{overeven} implies that $[x, e_{mm}]=(n-1 +(p^{r}-1)n)x=-x$ and $[x, e_{m+1, m+1}]=
-(m-1+(p^r -1)m)x=x$,
and consequently $[x, e_{m+1, m+1}+e_{mm}]=0$. Therefore, 
$ye_{m, m+1}=(-1)^{|x|}u^+_0 u^+_1 x (e_{m+1, m+1}+e_{mm}) \Delta^{(r)}_{T, a}$ and by
Corollary 6.4 of \cite{hab}, we derive $(e_{m+1, m+1}+e_{m m})\Delta^{(r)}_{T, a}=\Delta^{(r)}_{T, a}-\Delta^{(r)}_{T, a}=0$.  
\end{proof}

\section{Harish-Chandra homomorphism}

As in the previous section, $G=GL(m|n)$. Let us recall some standard properties of $G$-supermodules and $G_r$-supermodules.
First of all, Lemma \ref{equivalence} shows that the category of $G_r$-supermodules is naturally equivalent to the category of $Dist(G_r)$-supermodules. A simple $G_r$-supermodule is isomorphic to the $G_r$-head $L_r(\lambda)$ of $Dist(G_r)\otimes_{Dist(B_r^+)} K_{\lambda}$
or to its parity shift $\Pi L_r(\lambda)$ (cf. \cite{kuj}, Theorem 5.4). Moreover, $L_r(\lambda)\simeq L_r(\mu)$ if and only if
$\lambda-\mu\in p^r X(T)$. Observe also that $L_r(\lambda)\neq 0$ for arbitrary weight $\lambda\in X(T)$.

The category of $G$-supermodules is equivalent to the category of {\it integrable} \linebreak $Dist(G)$-supermodules (cf. \cite{brunkuj}, Corollary 3.5). Since $Dist(G)=Dist(G_{ev})Dist(G_r)$, one can mimic the proofs of Proposition 4.3 and Proposition 4.4 from
\cite{shuweiq} to show that
the simple $G$-supermodule $L(\lambda)$ of highest weight $\lambda\in X(T)^+$ remains simple as $G_r$-supermodule whenever
$\lambda\in X_r(T)^+$, where 
\[X_r(T)^+=\{\lambda\in X(T) | 0\leq \lambda_i-\lambda_{i+1} < p^r \mbox{ for } i\neq m,  1\leq i\leq m+n-1\}.\]   
It is easy to see that for any weight $\lambda\in X(T)$ there are $r\geq 1$ and $\mu\in X_r(T)^+$ such that $\lambda-\mu\in p^r X(T)$. In particular, $L_r(\lambda)\simeq L(\mu)|_{G_r}$.

In addition to the Bruhat-Tits order on $X(T)$ let us define a partial order $\preceq$ on $X(T)$ by $\mu\preceq\lambda$ if and only if 
$\mu_i\leq\lambda_i$ for each $1\leq i\leq m+n$. 
Denote by $X^{(r)}(T)$ the set $$\{\lambda\in X(T)| 0\preceq\lambda\preceq (p^r -1, \ldots, p^r-1)\}.$$ 

Denote $Z(Dist(G))$ by $Z$ and $Dist(G_r)^G=Z\bigcap Dist(G_r)$ by $Z_r$.
Then $Z=\bigcup_{r\geq 1} Z_r$ 
is a locally finite (super)algebra. The (Jacobson) radical of $Z$ is equal to its upper nil radical (cf. \cite{jb}, I, \S 10, Theorem 1).
\begin{lm}\label{adecomposition}
The superspace $Dist(G)$ can be decomposed as $$Dist(G)=Dist(T)\oplus (Dist(G)Dist(U^+)^+ +Dist(U^-)^+ Dist(G)).$$
\end{lm}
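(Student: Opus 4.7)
The plan is to reduce everything to the triangular (PBW) decomposition of $Dist(G)$. By Lemma \ref{basis} together with its obvious extension from $Dist(G_r)$ to $Dist(G)=K\otimes_{\mathbb{Z}}U_{\mathbb{Z}}(gl(m|n))$ (via Lemma 3.1 of \cite{brunkuj}), one has a PBW basis of $Dist(G)$ in the ordering ``negative roots, then torus, then positive roots.'' This is equivalent to saying that the multiplication map
\[
Dist(U^-)\otimes Dist(T)\otimes Dist(U^+)\;\longrightarrow\; Dist(G),\qquad x\otimes h\otimes y\mapsto xhy,
\]
is an isomorphism of superspaces. This is the main ingredient, and I would state it first.

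Given this, the rest is a short manipulation. Since $\epsilon_{Dist(U^\pm)}$ is a superalgebra map, the augmentation ideals $Dist(U^\pm)^+$ are two-sided ideals of $Dist(U^\pm)$, so
\[
Dist(U^+)\cdot Dist(U^+)^+\subseteq Dist(U^+)^+,\qquad Dist(U^-)^+\cdot Dist(U^-)\subseteq Dist(U^-)^+.
\]
Consequently,
\[
Dist(G)\cdot Dist(U^+)^+ \;=\; Dist(U^-)\,Dist(T)\,Dist(U^+)^+,
\]
\[
Dist(U^-)^+\cdot Dist(G) \;=\; Dist(U^-)^+\,Dist(T)\,Dist(U^+),
\]
where in each case one uses the triangular decomposition to rewrite the leftover $Dist(G)$ factor and then absorbs it into the $U^\pm$ part using the ideal property.

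Now decompose $Dist(U^\pm)=K\oplus Dist(U^\pm)^+$ inside the triangular tensor product. Using the isomorphism above, $Dist(G)$ splits as a direct sum of four subsuperspaces:
\[
Dist(T)\;\oplus\;Dist(U^-)^+\,Dist(T)\;\oplus\;Dist(T)\,Dist(U^+)^+\;\oplus\;Dist(U^-)^+\,Dist(T)\,Dist(U^+)^+.
\]
The first summand is $Dist(T)$, while the sum of the remaining three is precisely $Dist(G)\,Dist(U^+)^+ + Dist(U^-)^+\,Dist(G)$ by the two formulas above. This delivers both the spanning property and, crucially, the directness of the sum, since the PBW basis separates the $Dist(T)$ summand (the terms with trivial $u^-$ and $u^+$ factors) from the other three.

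The only step requiring care is the triangular decomposition itself: the proof of Lemma \ref{basis} is stated for $Dist(G_r)$, so one must note that the very same argument (based on Lemma 3.1 and Theorem 3.2 of \cite{brunkuj}) yields the unrestricted PBW basis of $Dist(G)$ and hence the tensor isomorphism. Once this is in hand there is no genuine obstacle; the decomposition is a purely formal consequence of triangularity and the ideal property of the augmentation ideals.
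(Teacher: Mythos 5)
Your proof is correct and follows essentially the same route as the paper: both rest on the triangular (PBW) decomposition $Dist(U^-)\otimes Dist(T)\otimes Dist(U^+)\simeq Dist(G)$ and on identifying $Dist(G)Dist(U^+)^+$ and $Dist(U^-)^+Dist(G)$ inside it. The only cosmetic difference is that you organize the final step via the splitting $Dist(U^\pm)=K\oplus Dist(U^\pm)^+$, whereas the paper reads off the same four-fold decomposition directly from the explicit PBW basis monomials.
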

\begin{proof}
Denote $Dist(G)Dist(U^+)^+ +Dist(U^-)^+ Dist(G)$ by $J$.
$Dist(G)$ is isomorphic to $Dist(U^-)\otimes Dist(T)\otimes Dist(U^+)$ as a $Dist(U^-)\times
Dist(U^+)$-superbimodule. Thus $Dist(G)Dist(U^+)^+$ has a basis consisting of the products
$$\prod_{|e_{ij}|=1, i> j}e_{ij}^{d_{ij}}\prod_{|e_{ij}|=0, i> j}e_{ij}^{(t_{ij})}\left(\begin{array}{c}
e \\
s
\end{array}\right)\prod_{|e_{ij}|=1, i< j}e_{ij}^{d'_{ij}}\prod_{|e_{ij}|=0, i< j}e_{ij}^{(t'_{ij})}$$
with $\sum d'_{ij} +\sum t'_{ij}>0$. Symmetrically, $Dist(U^-)^+ Dist(G)$ has a basis consisting of the products
$$\prod_{|e_{ij}|=1, i> j}e_{ij}^{d_{ij}}\prod_{|e_{ij}|=0, i> j}e_{ij}^{(t_{ij})}\left(\begin{array}{c}
e \\
s
\end{array}\right)\prod_{|e_{ij}|=1, i< j}e_{ij}^{d'_{ij}}\prod_{|e_{ij}|=0, i< j}e_{ij}^{(t'_{ij})}$$
with $\sum d_{ij} +\sum t_{ij}>0$. Hence $J$ has a basis consisting of the above products
such that $\sum d'_{ij} +\sum t'_{ij}+ \sum d_{ij} +\sum t_{ij}>0$. In particular, $Dist(T)\bigcap
J=0$. It remains to observe that $Dist(G)=Dist(T)+J$.
\end{proof}
Every $x\in Dist(G)$ can be uniquely presented as $x=x_1+x_2$, where $x_1\in Dist(T)$ and $x_2\in J$. Denote $x_1$ by $h(x)$.
Notice that $|x|=1$ implies $h(x)=0$. In other words, the linear map $h : Dist(G)\to Dist(T)$ is a superspace homomorphism.

Since $L(\lambda)$ is irreducible as a $K[G]$-comodule, that is $L(\lambda)$ is an absolutely irreducible $G$-supermodule (cf. \cite{zub2}), 
any central element $z$ acts on  $L(\lambda)$ as
$zv=z_{\lambda}v$ for each $v\in L(\lambda)$, where $z_{\lambda}\in K$.
\begin{lm}\label{actaszero} 
If a central element $z$ satisfies the condition $z_{\lambda}=0$ for every $\lambda\in X(T)^+$, then $z$ is nilpotent.
\end{lm}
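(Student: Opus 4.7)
The plan is to reduce to a single Frobenius kernel and then invoke the nilpotency of the Jacobson radical of a finite-dimensional algebra. Since $Z=\bigcup_{r\geq 1}Z_r$, pick $r$ with $z\in Z_r\subseteq Dist(G_r)$. Because $Dist(G_r)$ is finite-dimensional, its Jacobson radical is nilpotent, so it will be enough to show that $z$ annihilates every simple $Dist(G_r)$-supermodule.

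To do this, recall from the opening of this section that every simple $G_r$-supermodule is, up to parity shift, of the form $L_r(\mu)$ with $\mu$ ranging over a fundamental domain for $X(T)/p^rX(T)$; a natural choice of this fundamental domain is $X_r(T)^+$. A direct comparison of definitions gives $X_r(T)^+\subseteq X(T)^+$: both impose $\lambda_i\geq\lambda_{i+1}$ for $i\neq m$, and $X_r(T)^+$ merely adds the upper bound $\lambda_i-\lambda_{i+1}<p^r$ on these differences. Hence every such $\mu$ is dominant for $G$, so the restriction identity $L_r(\mu)\cong L(\mu)|_{G_r}$ recalled above is available. Since $z$ is central in $Dist(G)$ and $L(\mu)$ is absolutely irreducible, $z$ acts on $L(\mu)$ as the scalar $z_\mu=0$, and consequently annihilates $L(\mu)|_{G_r}=L_r(\mu)$ and its parity shift $\Pi L_r(\mu)$ for every $\mu\in X_r(T)^+$. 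This exhausts the simple $Dist(G_r)$-supermodules, so $z$ lies in the Jacobson radical of $Dist(G_r)$, giving $z^N=0$ for some $N$.

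The main obstacle is bridging the hypothesis, which is phrased in terms of dominant weights of the full supergroup $G$, to the simple modules of the finite-dimensional algebra $Dist(G_r)$. The enabling fact is the inclusion $X_r(T)^+\subseteq X(T)^+$: it is what allows the restriction isomorphism $L_r(\mu)\cong L(\mu)|_{G_r}$ to transport the vanishing $z_\mu=0$ into the Frobenius-kernel setting; without it, the hypothesis would only give information at $G$-dominant weights and would not reach all simple $Dist(G_r)$-modules.
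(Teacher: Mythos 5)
Your proof is correct and follows the same route the paper takes: it reduces to the observation (stated just before the lemma) that every simple $G_r$-supermodule is, up to parity shift, the restriction of some $L(\mu)$ with $\mu\in X_r(T)^+\subseteq X(T)^+$, and then uses nilpotency of the Jacobson radical of the finite-dimensional algebra $Dist(G_r)$ — which is exactly the content of Haboush's Lemma 7.3 that the paper cites. The only quibble is that $X_r(T)^+$ is not literally a fundamental domain for $X(T)/p^rX(T)$ (a class may have several representatives there), but it does meet every class, which is all your argument needs.
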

\begin{proof}
Since any simple $G_r$-supermodule is the restriction to $G_r$ of a simple $G$-supermodule, one can modify the proof of Lemma 7.3 of \cite{hab}. 
\end{proof}
The following Proposition generalizes results from \cite{har, howe}.

\begin{pr}\label{harish-chandra} 
The linear map $h$ induces a superalgebra homomorphism $Z\to Dist(T)$. Moreover, the kernel of $h|_Z$ is contained in the radical of $Z$.
\end{pr}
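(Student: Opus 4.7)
The plan is to follow Haboush's Harish-Chandra strategy: identify, for each central $z \in Z$ and each $\lambda \in X(T)^+$, the scalar $z_\lambda$ by which $z$ acts on $L(\lambda)$ with the scalar $\lambda(h(z))$ by which $h(z) \in Dist(T)$ acts on $K_\lambda$. Both multiplicativity of $h|_Z$ and the inclusion of the kernel in the radical will drop out of this identity.

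First I fix $\lambda \in X(T)^+$, a highest weight vector $v_\lambda \in L(\lambda)$, and decompose $z = h(z) + j_1 + j_2$ via Lemma \ref{adecomposition} with $j_1 \in Dist(G)Dist(U^+)^+$ and $j_2 \in Dist(U^-)^+ Dist(G)$. The summand $j_1$ kills $v_\lambda$ because $Dist(U^+)^+ \cdot v_\lambda = 0$. For $j_2 = \sum u_k b_k$, I use $L(\lambda) = Dist(U^-) v_\lambda$ (a consequence of the triangular decomposition $Dist(G) = Dist(U^-) Dist(T) Dist(U^+)$) to see that each $b_k v_\lambda$ is a sum of weight vectors of weight $\preceq \lambda$, and then $u_k \in Dist(U^-)^+$ shifts each such weight by a strictly negative combination of simple roots. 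Hence $zv_\lambda = \lambda(h(z)) v_\lambda + (\text{strictly lower weight terms})$. Comparing with $zv_\lambda = z_\lambda v_\lambda$ and projecting onto the one-dimensional $\lambda$-weight space $Kv_\lambda$ forces $z_\lambda = \lambda(h(z))$.

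Multiplicativity on even central elements is then immediate: for $z_1, z_2 \in Z_0$,
\[
\lambda(h(z_1 z_2)) = (z_1 z_2)_\lambda = z_{1,\lambda} z_{2,\lambda} = \lambda(h(z_1))\lambda(h(z_2)) = \lambda(h(z_1)h(z_2))
\]
for every $\lambda \in X(T)^+$. An element of $Dist(T)$ is a finite combination of the divided-power basis $\prod_i \binom{e_{ii}}{s_i}$, and induces on $X(T) = \mathbb{Z}^{m+n}$ the polynomial function $\lambda \mapsto \sum_s c_s \prod_i \binom{\lambda_i}{s_i}$; since the dominant cone is Zariski-dense in $K^{m+n}$, two such elements agreeing on every dominant weight must coincide, so $h(z_1 z_2) = h(z_1)h(z_2)$. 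Odd central elements are handled directly: $h$ vanishes on $Dist(G)_1$ by definition, and a central odd $z$ has $z_\lambda = 0$ on every $L(\lambda)$ (an odd endomorphism of a $\mathbb{Z}_2$-graded absolutely simple module cannot act as a nonzero scalar), so both sides of the multiplicativity identity vanish whenever a factor is odd. For the kernel, if $h(z)=0$ then $z_\lambda = \lambda(h(z)) = 0$ for every dominant $\lambda$, whence Lemma \ref{actaszero} makes $z$ nilpotent; thus $\ker(h|_Z)$ is a nil ideal of $Z$, contained in the upper nil radical of $Z$, which by the text's remark coincides with the Jacobson radical.

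The main obstacle is the weight-strictness step, where both the triangular decomposition of $Dist(G)$ and the fact that $L(\lambda)$ has all its weights $\preceq \lambda$ are genuinely needed to control the term $u_k \cdot (b_k v_\lambda)$ for arbitrary $b_k \in Dist(G)$; one must not confuse the $T$-weight of $b_k$ in $Dist(G)$ with the $T$-weight of $b_k v_\lambda$ in $L(\lambda)$. Once this identification is established, the remaining steps are a routine comparison of polynomial functions on the dominant cone together with an application of Lemma \ref{actaszero}.
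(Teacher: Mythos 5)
Your identification $z_\lambda=\lambda(h(z))$ via Lemma \ref{adecomposition} and the highest weight vector is sound, and your treatment of the kernel coincides with the paper's (both reduce to $z_\lambda=0$ and invoke Lemma \ref{actaszero}). For multiplicativity, however, you take a genuinely different route. The paper proves $h(yx)=h(y)h(x)$ directly for \emph{every} $y\in Dist(G)$ and central $x$ by a purely Hopf-algebraic computation: centrality of $x$ forces $y_2x\in J$, and normality of $Dist(U^-)$ in $Dist(B^-)$ forces $y_1x_2\in J$, so only $y_1x_1$ survives under $h$. You instead compare the scalars by which $h(z_1z_2)$ and $h(z_1)h(z_2)$ act on all highest weight spaces and then appeal to a separation statement for $Dist(T)$. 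The paper's route needs no separation lemma; yours has the advantage of establishing the formula $z_\lambda=\lambda(h(z))$ along the way (which the paper only records afterwards, as Remark \ref{actionofHC}).

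The genuine flaw is your justification of that separation step. In characteristic $p$ the function $\lambda\mapsto\binom{\lambda_i}{s_i}$ on $X(T)=\mathbb{Z}^{m+n}$ is \emph{not} a polynomial function on $K^{m+n}$: for $s_i\geq p$ it does not even factor through reduction modulo $p$, while the image of $X(T)^+$ under $\mathbb{Z}^{m+n}\to K^{m+n}$ lies in the finite set $\mathbb{F}_p^{m+n}$, which is not Zariski dense. So "two elements of $Dist(T)$ inducing the same function on the dominant cone coincide" cannot be deduced from density of that cone. The statement you need is nevertheless true and must be proved by a characteristic-$p$ argument. For instance: since $Z=\bigcup_r Z_r$ and $h(Z_r)\subseteq Dist(T_r)$, it suffices to treat $\mu=\sum_s c_s\binom{e}{s}$ with $0\leq s_i\leq p^r-1$; for such $s$ Lucas' theorem shows $\binom{\lambda}{s}$ depends only on $\lambda$ modulo $p^r$, the matrix $\bigl(\binom{\lambda}{s}\bigr)_{\lambda,s\in X^{(r)}(T)}$ is unitriangular and hence invertible, and every residue class modulo $p^rX(T)$ contains a dominant weight, so $\mu$ vanishing on $X(T)^+$ forces $\mu=0$. (Alternatively, run a finite-difference argument on a translated box $\mu+\{0,\dots,N\}^{m+n}$ chosen inside $X(T)^+$.) With this repair your proof is complete; as written, the multiplicativity step rests on an argument that is invalid in positive characteristic.
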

\begin{proof}
If $x$ is a homogeneous central element, then for any $y\in Dist(G)$ we have $y_2 x\in J$. Thus $h(yx)=h(y_1 x)=h(y_1 x_1 + y_1 x_2)$. Since $Dist(U^-)$ is a normal Hopf supersubalgebra of $Dist(B^-)$, $uz=\sum u_1z s_{Dist(T)}(u_2) u_3\in Dist(U^-)Dist(T)$ for any $u\in Dist(T), z\in Dist(U^- )$. Thus $y_1x_2\in J$ and $h(yx)=h(y)h(x)$.

Let $z\in Z\bigcap J$. Then $z=z' +z"$, where $z'\in Dist(G)Dist(U^+)^+, z"\in Dist(U^-)^+ Dist(G)$. Let $v_{\lambda}\in L(\lambda)_{\lambda}\setminus 0$. Since $Dist(U^+)^+ v_{\lambda}=0$, it follows that
$zL(\lambda)\subseteq Dist(U^-)^+ L(\lambda)$. On the other hand, $W=Dist(U^-)^+L(\lambda)$ is a $T$-supermodule whose all non-zero weight components $W_{\mu}$ satisfy $\mu <\lambda$, that is $W$ is a proper supersubspace of $L(\lambda)$. Thus $z_{\lambda}=0$ for every $\lambda$. Lemma \ref{actaszero} concludes the proof.
\end{proof}
\begin{rem}\label{actionofHC}
The arguments in the above proof of Proposition \ref{harish-chandra} imply that 
$h(z)v=z_{\lambda}v$ for every $z\in Z(Dist(G))$, $\lambda\in X(T)^+$ and $v\in L(\lambda)_{\lambda}$.
\end{rem}
\begin{rem}\label{restrictiononFrob}
For any $r\geq 1$ one can define $J_r=J\bigcap Dist(G_r)$. It is easy to see that 
$J_r=Dist(G_r)Dist(U_r^+)^+ +Dist(U_r^-)^+ Dist(G_r)$ and $Dist(G_r)=Dist(T_r)\oplus J_r$. Again, one can define
a superspace homomorphism $h_r : Dist(G_r)\to Dist(T_r)$ that coincides with $h|_{Dist(G_r)}$ and induces a superalgebra homomorphism $Z_r\to Dist(T_r)$. 
\end{rem}
Let $G$ be an algebraic supergroup that acts on a (not necessary supercommutative) superalgebra $A$ by superalgebra automorphisms. In other words, $A$ is a $G$-supermodule and for any superalgebra $C\in\mathsf{Salg}_K$ the induced 
homomorphism from $G(C)$ to the group of $C$-linear locally finite automorphisms of $A\otimes C$ preserves the superalgebra structure of $A\otimes C$. It is easy to see that the latter condition is equivalent to $\tau_A$ being a superalgebra morphism.
\begin{rem}\label{actionofoddunipotent}
Let $G=G_a^-$ act on a superalgebra $A$. Then $\tau_A(a)=a\otimes 1 +\delta(a)\otimes x$, where $\delta\in End_K(A)$ is a locally finite odd endomorphism of the superspace $A$ such that $\delta^2=0$. Then $\tau_A$ is a superalgebra morphism if and only if $\delta$ is a right superderivation of $A$.
\end{rem}
Assume again that $G=GL(m|n)$. Consider the element $g\in U_{ij}(K[x])$ such that $g=E +xE_{ij}$, where $i\neq j, |x|=|e_{ij}|=1$. Here $E$ is the identity matrix of size $(m+n)\times (m+n)$
and $E_{ij}$ is the matrix of size $(m+n)\times(m+n)$, whose entry at the $(i,j)$-th position equals 1 and all remaining entries are zeroes. 
The conjugation action of $U_{ij}(K[x])$ on $K[G]\otimes K[x]$ is given as follows.
$$g(c_{kl})=
\left\{\begin{array}{ll}
c_{kl}, &\mbox{ if } k\neq i \ \mbox{and} \ l\neq j , \\
c_{il} -xc_{jl}, & \mbox{ if } k=i \ \mbox{and} \ l\neq j, \\
c_{kj} +c_{ki}x, & \mbox{ if } k\neq i \ \mbox{and} \ l=j, \\
c_{ij} +(t_{ii}-t_{jj})x, & \mbox{ if } k=i \ \mbox{and} \ l=j.
\end{array}\right. $$
Let us define a right (odd) superderivation $D_{ij}$ that acts on the polynomial superalgebra $A(m|n)$ given by generators $c_{kl}$ for $1\leq k, l\leq m+n$ as follows.
$$c_{kl}D_{ij}=
\left\{\begin{array}{ll}
0, & \mbox{ if } k\neq i \ \mbox{and} \ l\neq j , \\
(-1)^{1+|c_{jl}|}c_{jl}, & \mbox{ if } k=i \ \mbox{and} \ l\neq j, \\
c_{ki}, & \mbox{ if } k\neq i \ \mbox{and} \ l=j, \\
t_{ii}-t_{jj}, & \mbox{ if } k=i \ \mbox{and} \ l=j.
\end{array}\right.$$ 
Remark \ref{actionofoddunipotent} implies that
$g(u)=u+(uD_{ij})x$ for every $u\in A(m|n)$. For example, $c^{\lambda}D_{ij}=(\lambda_i c^{\lambda-\epsilon_i}+\lambda_j c^{\lambda-\epsilon_j})c_{ji}$. Let $t^{\lambda}$ denote $\prod_{1\leq i\leq m+n}t_{ii}^{\lambda_i}$. Then
$t^{\lambda}D_{ij} =(\lambda_i t^{\lambda-\epsilon_i}+\lambda_j t^{\lambda-\epsilon_j})c_{ji}$ as well.

In what follows let $I_r=I_r(m|n)$ denote the subalgebra $h(Z_r)\subseteq Dist(T_r)$. Then $I=\bigcup_{r\geq 1} I_r=h(Z)$.

\section{The center of $Dist(GL(1|1))$}

Let $G=GL(1|1)$. When working with the $r$-th Frobenius kernel $G_r$ of $G$, for simplicity, denote $p^r$ by $q$ and $\Delta_{K[G_r]}$ just by $\Delta$.
The superalgebra $K[G_r]$ has a basis consisting of all elements 
$$c_{11}^{\lambda_1}c_{22}^{\lambda_2}c^a_{21}c^b_{12}=c^{\lambda}c^a_{21}c^b_{12} \mbox {, where } \lambda\in X^{(r)}(T) \mbox{ and } 0\leq a,b\leq 1 .$$ 

Our computations will be greatly simplified if we 
replace the previous basis of $K[G_r]$ by a new basis given by elements
$x^{\lambda}x_{21}^a x_{12}^b, \lambda\in X^{(r)}(T)$, where $x^{\lambda}=x_{11}^{\lambda_1}x_{22}^{\lambda_2}$, $a, b=0, 1$ and 
\[x_{11}=c_{11}, x_{12}=c_{11}^{-1}c_{12}, x_{21}=c_{11}^{-1}c_{21},
x_{22}=c_{22}-c_{11}^{-1}c_{21}c_{12}.\]
It is easy to see that 
\[x^{\lambda}=c^{\lambda}-\lambda_2 c^{\lambda-\epsilon_1-\epsilon_2}c_{21}c_{12} \mbox{ and } x^{\lambda}x_{21}x_{12}=c^{\lambda-2\epsilon_1}c_{21}c_{12}.\]

\begin{lm}\label{actionofsuperder}
The superderivations $D_{12}$ and $D_{21}$ act as follows.
\[
x^{\lambda}D_{12}=|\lambda|x^{\lambda}x_{21}, \ x_{21}D_{12}=0, \ x_{12}D_{12}=1-x_{11}^{q-1}x_{22},\]
\[x^{\lambda}D_{21}=|\lambda|x^{\lambda}x_{12}, \ x_{12}D_{21}=0 \mbox{ and } x_{21}D_{21}=x_{11}^{q-1}x_{22}-1.
\]
Consequently, \[(x^{\lambda}x_{21}x_{12})D_{12}=(1-x^{(q-1, 1)})x^{\lambda}x_{21} \mbox{ and } (x^{\lambda}x_{21}x_{12})D_{21}=(1-x^{(q-1, 1)})x^{\lambda}x_{12}.\]
\end{lm}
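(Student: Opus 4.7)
The plan is to expand every quantity in the lemma in terms of the original generators $c_{kl}$ and push the computation through the change of variables using the odd right Leibniz rule $(ab)D_{ij}=a(bD_{ij})+(-1)^{|b|}(aD_{ij})b$, which follows from Remark~\ref{actionofoddunipotent} by comparing the coefficients of $x$ in $g(ab)=g(a)g(b)$. The only preparatory step is to differentiate $c_{11}^{-1}$: applying Leibniz to the relation $c_{11}^{-1}c_{11}=1$ immediately yields $c_{11}^{-1}D_{12}=-c_{11}^{-2}c_{21}$ and $c_{11}^{-1}D_{21}=-c_{11}^{-2}c_{12}$.

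Each of the six elementary identities then reduces to a short direct calculation. The vanishings $x_{21}D_{12}=0$ and $x_{12}D_{21}=0$ are immediate because both Leibniz summands acquire a factor of $c_{21}^{2}$ or $c_{12}^{2}$. For $x_{12}D_{12}$, direct expansion gives $c_{11}^{-1}(c_{11}-c_{22})+c_{11}^{-2}c_{21}c_{12}=1-c_{11}^{-1}x_{22}$, which becomes $1-x_{11}^{q-1}x_{22}$ in $K[G_r]$ via the Frobenius relation $c_{11}^{q}=1$. The symmetric computation (taking care that $(-1)^{1+|c_{12}|}=1$, so $c_{22}D_{21}=+c_{12}$) produces $x_{21}D_{21}=x_{11}^{q-1}x_{22}-1$, while differentiating $x_{22}=c_{22}-c_{11}^{-1}c_{21}c_{12}$ and discarding vanishing triple products of odd elements yields $x_{22}D_{12}=x_{22}x_{21}$ and $x_{22}D_{21}=x_{22}x_{12}$. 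For the general weight formula $x^{\lambda}D_{12}=|\lambda|x^{\lambda}x_{21}$ I would first prove inductively on $n$ that $x_{11}^{n}D_{12}=nx_{11}^{n}x_{21}$ and $x_{22}^{n}D_{12}=nx_{22}^{n}x_{21}$ (both use that the odd element $x_{21}$ supercommutes with the even elements $x_{11}$ and $x_{22}$), and then combine the two via one more Leibniz step applied to $x_{11}^{\lambda_1}x_{22}^{\lambda_2}$; the case of $D_{21}$ is entirely parallel.

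The two "Consequently" identities follow from one final application of Leibniz to the triple product $x^{\lambda}\cdot x_{21}\cdot x_{12}$. The summand carrying $x^{\lambda}D_{ij}$ always acquires a factor of $x_{21}^{2}$ or $x_{12}^{2}$ and vanishes, so only the terms $x^{\lambda}\cdot x_{21}(x_{12}D_{12})$ and $-x^{\lambda}\cdot(x_{21}D_{21})x_{12}$ survive; substituting the values from the preceding paragraph and moving the even shift $x_{11}^{q-1}x_{22}$ past $x^{\lambda}$ collects the common factor $1-x^{(q-1,1)}$. The only step requiring genuine care is the sign bookkeeping from the odd Leibniz rule when the three-factor product $c_{11}^{-1}c_{21}c_{12}$ inside $x_{22}$ is expanded; everything else collapses mechanically by $x_{21}^{2}=x_{12}^{2}=0$ together with the substitution $x_{11}^{-1}=x_{11}^{q-1}$ available in $K[G_r]$.
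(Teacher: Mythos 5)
Your computation is correct and is exactly the ``straightforward calculation'' that the paper's proof leaves to the reader: expand everything in the generators $c_{kl}$, apply the odd right Leibniz rule from Remark \ref{actionofoddunipotent}, and use $c_{11}^{q}=1$ in $K[G_r]$ to identify $c_{11}^{-1}=x_{11}^{q-1}$ (the paper's hint $x^{(-1,1)}=x^{(q-1,1)}$). The only cosmetic slip is that in $x_{21}D_{12}$ the first Leibniz summand vanishes because $c_{21}D_{12}=0$ rather than because of a square of an odd element, but this does not affect any conclusion.
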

\begin{proof}
Straightforward calculations using that $x^{(-1,1)}=x_{11}^{-1}x_{22}=x^{(q-1,1)}$.
\end{proof}

Every element $f\in K[G_r]^G$ belongs to $K[G_r]^T$ and therefore has the form 
$\sum a_{\lambda}x^{\lambda} +\sum b_{\lambda}x^{\lambda}x_{21}x_{12}$, where $\lambda$ runs over $X^{(r)}(T)$.

If $f\in K[G_r]^G$, then $fD_{12}=0$ which gives 
\[
\sum_{\lambda\in X^{(r)}(T)}a_{\lambda}|\lambda|x^{\lambda}x_{21} +\sum_{\lambda\in X^{(r)}(T)} b_{\lambda}(1-x^{(q-1, 1)})x^{\lambda}x_{21}=0,
\]
or, equivalently 
\begin{equation}\label{eqo}
\sum_{\lambda\in X^{(r)}(T)}a_{\lambda}|\lambda|x^{\lambda} +\sum_{\lambda\in X^{(r)}(T)} b_{\lambda}(1-x^{(q-1, 1)})x^{\lambda}=0.
\end{equation}
Since the condition $fD_{21}=0$ also implies $(\ref{eqo})$, we conclude that the equation $(\ref{eqo})$  characterises elements of $K[G_r]^G$.

Define the representative $\overline{n}$ of the residue class of $n\in\mathbb{Z}$ modulo $q$ by $\overline{n}=n-q[\frac{n}{q}]$, and for 
$\lambda=(\lambda_1, \lambda_2)\in X(T)$ define $\overline{\lambda}\in X^{(r)}(T)$ by $\overline{\lambda}=(\overline{\lambda_1}, \overline{\lambda_2})$.
Denote $\epsilon_1-\epsilon_2$ by $\alpha$. The map $\lambda\mapsto\overline{\lambda+\alpha}$ is a bijection of $X^{(r)}(T)$ to itself and its inverse map is 
given by $\lambda\mapsto\overline{\lambda-\alpha}$. 
Each orbit $O_t$ of the map $\lambda\mapsto\overline{\lambda+\alpha}$ is uniquely defined by a parameter $t$ such that $0\leq t\leq q-1$, and it consists 
of all weights $\lambda\in X^{(r)}(T)$ such that $|\lambda|\equiv t\pmod q$. 

For each $\lambda\in X^{(r)}(T)$ define the element 
\[\gamma_{\lambda}=x^{\lambda}-x^{\overline{\lambda+\alpha}}+|\lambda|x^{\overline{\lambda+\alpha}}x_{21}x_{12},\]
and for $0\leq t \leq q-1$ denote $\sigma_t=\sum_{i=0}^{q-1} x^{\overline{(i,t-i)}} x_{21}x_{12}=\sum_{\lambda\in O_t}x^{\lambda} x_{21}x_{12}$.

\begin{tr}\label{generatorsinthesimplestcase}
The superspace $K[G_r]^G$ is generated by the following group of elements corresponding to parameters $t$ such that $0 \leq t\leq q-1$:

$\bullet$ the element $\sigma_t$ and the elements $x^{\lambda}$ for which $|\lambda|\equiv t\pmod q$, if $p|t$,

$\bullet$ the elements $\gamma_{\lambda}$ for which $|\lambda|\equiv t\pmod q$, if $p\not|t$.
\end{tr}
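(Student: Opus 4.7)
The plan is to characterize $G$-invariance by an explicit linear recursion on the coefficients of $f$, and then to solve that recursion orbit by orbit under the map $\lambda\mapsto\overline{\lambda+\alpha}$. Since $G=U^-TU^+$ in $GL(1|1)$, with $U^\pm$ purely odd, an element of $K[G_r]$ is $G$-invariant iff it is $T$-invariant and annihilated by the conjugation actions of $U^+=U_{12}$ and $U^-=U_{21}$; $T$-invariance already forces the stated shape $f=\sum a_\lambda x^\lambda+\sum b_\lambda x^\lambda x_{21}x_{12}$. Remark \ref{actionofoddunipotent} together with Lemma \ref{actionofsuperder} translates $U^+$-invariance into $fD_{12}=0$, which after cancelling $x_{21}$ on the right is exactly equation (\ref{eqo}); the analogous computation with $D_{21}$ yields the same equation. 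Using $x_{11}^q=x_{22}^q=1$ in $K[G_r]$ (a consequence of the Frobenius-kernel structure) one computes $x^{(q-1,1)}x^\lambda=x^{\overline{\lambda-\alpha}}$, so comparing coefficients of each basis monomial $x^\mu$ in (\ref{eqo}), after re-indexing the subtracted sum by $\mu=\overline{\lambda-\alpha}$, yields the recursion
\[b_{\overline{\lambda+\alpha}}-b_\lambda=|\lambda|\,a_\lambda\qquad(\lambda\in X^{(r)}(T)).\]

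Next I would analyse this recursion orbit by orbit. The map $\lambda\mapsto\overline{\lambda+\alpha}$ partitions $X^{(r)}(T)$ into $q$ orbits $O_t$ ($0\le t\le q-1$) of size $q$, with $|\lambda|\equiv t\pmod q$ throughout $O_t$. The crucial observation is that although $|\overline{\lambda+\alpha}|$ may jump by $\pm q$ from $|\lambda|$ at the two wrap-around points of the orbit, in $K$ we have $q=p^r=0$, so $|\mu|$ equals $t$ throughout $O_t$ as an element of $K$. If $p\nmid t$, then $|\lambda|\neq 0$ in $K$ for every $\lambda\in O_t$, so the recursion expresses $a_\lambda$ in terms of freely-chosen $b$'s on $O_t$, giving an orbit-local solution space of dimension $|O_t|=q$; if $p\mid t$, the recursion degenerates to $b_{\overline{\lambda+\alpha}}=b_\lambda$ with no constraint on the $a_\lambda$, giving dimension $q+1$.

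Finally I would match the claimed generators against these dimensions. Direct substitution shows that each $\gamma_\lambda$ solves the recursion (the only nontrivial residual, at index $\overline{\lambda+\alpha}$, equals $|\lambda|-|\overline{\lambda+\alpha}|\in\{0,\pm q\}$, which vanishes in $K$), while for $p\mid t$ both $\sigma_t$ and each $x^\lambda$ with $\lambda\in O_t$ trivially satisfy it. When $p\nmid t$, linear independence of $\{\gamma_\lambda\}_{\lambda\in O_t}$ follows by inspecting a hypothetical relation $\sum c_\lambda\gamma_\lambda=0$: the coefficient of $x^\mu$ forces $c_\mu=c_{\overline{\mu-\alpha}}$, so $c$ is constant on $O_t$, and then the coefficient of $x^\mu x_{21}x_{12}$ equals $tc$ in $K$, forcing $c=0$. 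When $p\mid t$, the $q+1$ elements $\sigma_t$ and $x^\lambda$ ($\lambda\in O_t$) have disjoint support in the basis of $K[G_r]$ and are therefore linearly independent. Since the number of listed elements on each orbit matches the orbit-local solution dimension, they span $K[G_r]^G$. The main technical obstacle will be the careful bookkeeping around the reduction $x^{(q-1,1)}x^\lambda=x^{\overline{\lambda-\alpha}}$ and the fact that the arithmetic discrepancies $|\lambda|-|\overline{\lambda\pm\alpha}|$ vanish in characteristic $p$, which is what lets the elements $\gamma_\lambda$ be genuinely invariant rather than merely invariant modulo lower-order terms.
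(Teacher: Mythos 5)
Your proposal is correct and follows essentially the same route as the paper: both reduce $G$-invariance to the recursion $b_{\overline{\lambda+\alpha}}-b_{\lambda}=|\lambda|a_{\lambda}$ coming from equation (\ref{eqo}) and then solve it orbit by orbit under $\lambda\mapsto\overline{\lambda+\alpha}$, using that $|\lambda|=t$ in $K$ throughout $O_t$. The only difference is cosmetic: where you establish spanning by a dimension count against the $q$ (resp.\ $q+1$) linearly independent invariants per orbit, the paper exhibits the explicit combination $\sum_{\lambda\in O_t}(b_{\lambda}/|\lambda|)\,\gamma_{\overline{\lambda-\alpha}}$ recovering a given invariant when $p\nmid t$.
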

\begin{proof}
The equation $(\ref{eqo})$ is equivalent to the sytem of equations
\[
b_{\overline{\lambda+\alpha}}-b_{\lambda}=|\lambda|a_{\lambda} \mbox{ for } \lambda\in X^{(r)}(T).
\]

This system can be split into a collection of subsystems, each one of which corresponds to a unique orbit $O_t$, where $0\leq t\leq q-1$.

If $p|t$, then the corresponding subsystem implies that all $b_{\lambda}$ are equal to each other for $\lambda\in O_t$. 
Since $\sigma_{t}$ and $x^{\lambda}$ for $\lambda\in O_t$ can be easily checked to be invariants, they are generators corresponding to $t$.
 
If $p\not|t$, then 
\[\sum_{\lambda\in O_t}\frac{b_{\lambda}}{|\lambda|}\gamma_{\overline{\lambda-\alpha}}=
\sum_{\lambda\in O_t}\frac{b_{\overline{\lambda+\alpha}}-b_{\lambda}}{|\lambda|}x^{\lambda}+\sum_{\lambda\in O_t}
b_{\lambda}x^{\lambda}x_{21}x_{12}=\]
\[\sum_{\lambda\in O_t} a_{\lambda}x^{\lambda}+\sum_{\lambda\in O_t}
b_{\lambda}x^{\lambda}x_{21}x_{12}.
\]
Since all $\gamma_{\lambda}$, for $\lambda\in O_t$, are invariants, the theorem follows.
\end{proof}
We have the following formulas :
\[\begin{aligned}
\Delta(c^{\lambda})=&c^{\lambda}\otimes c^{\lambda}+\lambda_1 c^{\lambda-\epsilon_1}c_{12}\otimes c^{\lambda-\epsilon_1}c_{21}
+\lambda_2 c^{\lambda-\epsilon_2}c_{21}\otimes c^{\lambda-\epsilon_2}c_{12}\\
&-\lambda_1\lambda_2 c^{\lambda-\epsilon_1-\epsilon_2}c_{12}c_{21}\otimes c^{\lambda-\epsilon_1-\epsilon_2}c_{21}c_{12},
\end{aligned}\]
\[\begin{aligned}
\Delta(c^{\lambda}c_{12})=&c^{\lambda+\epsilon_1}\otimes c^{\lambda}c_{12}+c^{\lambda}c_{12}\otimes c^{\lambda+\epsilon_2}+
\lambda_1 c^{\lambda}c_{12}\otimes c^{\lambda-\epsilon_1}c_{21}c_{12} \\
&-\lambda_2 c^{\lambda-\epsilon_2}c_{21}c_{12}\otimes c^{\lambda}c_{12},
\end{aligned}\]
\[\begin{aligned}
\Delta(c^{\lambda}c_{21})=&c^{\lambda}c_{21}\otimes c^{\lambda+\epsilon_1} +c^{\lambda+\epsilon_2}\otimes c^{\lambda}c_{21}+
\lambda_1 c^{\lambda-\epsilon_1}c_{21}c_{12}\otimes c^{\lambda}c_{21}\\
&-\lambda_2 c^{\lambda}c_{21}\otimes c^{\lambda-\epsilon_2}c_{21}c_{12},
\end{aligned}\]
\[\begin{aligned}
\Delta(c^{\lambda}c_{21}c_{12})=&c^{\lambda+\epsilon_1}c_{21}\otimes c^{\lambda+\epsilon_1}c_{12}-c^{\lambda+\epsilon_2}c_{12}\otimes c^{\lambda+\epsilon_2}c_{21}
+c^{\lambda}c_{21}c_{12}\otimes c^{\lambda+\epsilon_1+\epsilon_2}\\
&+c^{\lambda+\epsilon_1+\epsilon_2}\otimes c^{\lambda}c_{21}c_{12}+(\lambda_1-\lambda_2)c^{\lambda}c_{21}c_{12}\otimes c^{\lambda}c_{21}c_{12}
\end{aligned}\]

\begin{lm}\label{pairingwithc-s}
For every $\lambda\in X^{(r)}(T)$ we have
\[
\left(\begin{array}{c}
e \\
\pi
\end{array}\right)e_{21}e_{12}(c^{\lambda}c_{21}c_{12})=-\left(\begin{array}{c}
\lambda+\epsilon_1+\epsilon_2 \\
\pi
\end{array}\right)
\]
and 
\[
\left(\begin{array}{c}
e \\
\pi
\end{array}\right)e_{21}e_{12}(c^{\lambda})=-\lambda_2 \left(\begin{array}{c}
\lambda \\
\pi
\end{array}\right) .
\]
\end{lm}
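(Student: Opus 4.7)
The plan is to evaluate both pairings directly from the definition of the product in $Dist(G) \subseteq K[G]^*$. Iterating the formula $(\phi\psi)(r) = \sum (-1)^{|\psi||r_1|}\phi(r_1)\psi(r_2)$ and invoking coassociativity, for any homogeneous $\phi_1,\phi_2,\phi_3 \in Dist(G)$ and $f \in K[G]$ one obtains
\[\phi_1\phi_2\phi_3(f) = \sum (-1)^{|\phi_3|(|f_{(1)}|+|f_{(2)}|) + |\phi_2||f_{(1)}|}\phi_1(f_{(1)})\phi_2(f_{(2)})\phi_3(f_{(3)}),\]
where $\Delta^{(2)}(f) = \sum f_{(1)}\otimes f_{(2)}\otimes f_{(3)}$. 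Specializing to $\phi_1 = \binom{e}{\pi}$, $\phi_2 = e_{21}$, $\phi_3 = e_{12}$, so that $|\phi_1| = 0$ and $|\phi_2| = |\phi_3| = 1$, the sign collapses to $(-1)^{|f_{(2)}|}$.

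Next I would record the elementary pairing facts that drastically restrict the non-zero summands. The distribution $\binom{e}{\pi}$ lies in $Dist(T)$, so it vanishes on any monomial containing $c_{12}$ or $c_{21}$ and satisfies $\binom{e}{\pi}(c^\mu) = \binom{\mu}{\pi}$. The primitivity of $e_{ij}$ for $i\neq j$ together with $e_{ij}(c_{kl}) = \delta_{ik}\delta_{jl}$ yields, via the super Leibniz rule, $e_{21}(c^\mu c_{21}) = e_{12}(c^\mu c_{12}) = 1$, while $e_{21}$ (respectively $e_{12}$) annihilates the other relevant monomials $c^\mu$, $c^\mu c_{12}$, $c^\mu c_{21}c_{12}$ (respectively $c^\mu$, $c^\mu c_{21}$, $c^\mu c_{21}c_{12}$).

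For the first identity I would expand $\Delta^{(2)}(c^\lambda c_{21}c_{12}) = (\Delta\otimes \mathrm{id})\Delta(c^\lambda c_{21}c_{12})$ using the four coproduct formulas listed just above the lemma. Among the five summands of $\Delta(c^\lambda c_{21}c_{12})$, only the first, $c^{\lambda+\epsilon_1}c_{21}\otimes c^{\lambda+\epsilon_1}c_{12}$, has its right-hand factor paired non-trivially by $e_{12}$; expanding $\Delta(c^{\lambda+\epsilon_1}c_{21})$ and keeping only the summand whose left slot lies in $K[T]$ leaves the single triple
\[c^{\lambda+\epsilon_1+\epsilon_2}\otimes c^{\lambda+\epsilon_1}c_{21}\otimes c^{\lambda+\epsilon_1}c_{12},\]
giving the value $(-1)^{1}\binom{\lambda+\epsilon_1+\epsilon_2}{\pi} = -\binom{\lambda+\epsilon_1+\epsilon_2}{\pi}$, as claimed.

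For the second identity I would apply the same sieve directly to $\Delta^{(2)}(c^\lambda)$. Among the four summands of $\Delta(c^\lambda)$, only $\lambda_2\, c^{\lambda-\epsilon_2}c_{21}\otimes c^{\lambda-\epsilon_2}c_{12}$ has a right-hand factor of the shape required by $e_{12}$, and within $\Delta(c^{\lambda-\epsilon_2}c_{21})$ only the summand $c^{\lambda}\otimes c^{\lambda-\epsilon_2}c_{21}$ has its left slot in $K[T]$. This produces the contribution $(-1)^{1}\lambda_2\binom{\lambda}{\pi}$. The principal obstacle is purely bookkeeping: one must verify that \emph{all} other summands of the two $\Delta^{(2)}$-expansions are killed by at least one of $\binom{e}{\pi}$, $e_{21}$, $e_{12}$, which amounts to a systematic but short case analysis.
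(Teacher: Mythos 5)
Your proposal is correct and follows essentially the same route as the paper: both evaluate the triple product against $\Delta^{(2)}$ of the given elements using the displayed coproduct formulas, observe that exactly one summand survives the pairing with $\bigl(\begin{smallmatrix}e\\ \pi\end{smallmatrix}\bigr)\otimes e_{21}\otimes e_{12}$, and read off the sign $(-1)^{|f_{(2)}|}=-1$. The only cosmetic difference is that the paper peels off one tensor factor at a time rather than writing the full three-fold formula up front.
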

\begin{proof}
To prove the first equation, using the above formulas for comultiplication, compute
\[\begin{aligned}
&\left(\begin{array}{c}
e \\
\pi
\end{array}\right)e_{21}e_{12}(c^{\lambda}c_{21}c_{12})=(\left(\begin{array}{c}
e \\
\pi
\end{array}\right)e_{21}\otimes e_{12})\Delta(c^{\lambda}c_{21}c_{12})=\\
&-\left(\begin{array}{c}
e \\
\pi
\end{array}\right)e_{21}(c^{\lambda+\epsilon_1}c_{21})=-(\left(\begin{array}{c}
e \\
\pi
\end{array}\right)\otimes e_{21})\Delta(c^{\lambda+\epsilon_1}c_{21})=-\left(\begin{array}{c}
\lambda+\epsilon_1+\epsilon_2 \\
\pi
\end{array}\right).
\end{aligned}\]
The proof of the second equation is analogous.
\end{proof}
\begin{lm}\label{rightpairing}
For every $\lambda\in X^{(r)}(T)$ we have
\[\left(\begin{array}{c}
e \\
\pi
\end{array}\right)(x^{\lambda})=\left(\begin{array}{c}
\lambda \\
\pi
\end{array}\right),
\
\left(\begin{array}{c}
e \\
\pi
\end{array}\right)(x^{\lambda}x_{21}x_{12})=0, \]
\[\left(\begin{array}{c}
e \\
\pi
\end{array}\right)e_{21}e_{12}(x^{\lambda})=0 \mbox{ and }
\left(\begin{array}{c}
e \\
\pi
\end{array}\right)e_{21}e_{12}(x^{\lambda}x_{21}x_{12})=-\left(\begin{array}{c}
\lambda-\epsilon_1+\epsilon_2 \\
\pi
\end{array}\right).
\]
\end{lm}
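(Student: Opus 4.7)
The strategy is to reduce every pairing to Lemma \ref{pairingwithc-s} via the change-of-basis identities already recorded in the text, namely
\[x^{\lambda} = c^{\lambda} - \lambda_2 c^{\lambda - \epsilon_1 - \epsilon_2} c_{21} c_{12}, \qquad x^{\lambda} x_{21} x_{12} = c^{\lambda - 2\epsilon_1} c_{21} c_{12}.\]
The other ingredient is the standard pairing on the torus,
\[\left(\begin{array}{c} e_{11} \\ \pi_1 \end{array}\right)\left(\begin{array}{c} e_{22} \\ \pi_2 \end{array}\right)(c_{11}^{\lambda_1} c_{22}^{\lambda_2}) = \left(\begin{array}{c} \lambda_1 \\ \pi_1 \end{array}\right)\left(\begin{array}{c} \lambda_2 \\ \pi_2 \end{array}\right) = \left(\begin{array}{c} \lambda \\ \pi \end{array}\right),\]
which comes from viewing $\binom{e}{\pi}$ as lying in $Dist(T)$ and working in $K[T]$, where $c_{11}$ and $c_{22}$ become group-likes.

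For the first two formulas, the key observation is that $\binom{e}{\pi}\in Dist(T)$ annihilates the Hopf ideal $I_T$ of $K[G]$ defining $T$, and $I_T$ contains both $c_{12}$ and $c_{21}$. Any summand that carries a factor $c_{21}c_{12}$ is therefore killed. For $x^{\lambda}$ only the $c^{\lambda}$ term survives and produces $\binom{\lambda}{\pi}$, while $x^{\lambda} x_{21} x_{12}$ lies entirely in $I_T$ and pairs to zero.

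For the third and fourth formulas, expand in the $c$-basis and apply Lemma \ref{pairingwithc-s} term by term with appropriately shifted exponents. For $x^{\lambda}$, the $c^{\lambda}$ term contributes $-\lambda_2\binom{\lambda}{\pi}$ by the second statement of Lemma \ref{pairingwithc-s}, while the $c^{\lambda-\epsilon_1-\epsilon_2} c_{21} c_{12}$ term contributes $-\binom{\lambda}{\pi}$ by the first statement, shifted by $-\epsilon_1-\epsilon_2$; after weighting the latter by the coefficient $-\lambda_2$ from the expansion of $x^{\lambda}$, the two cancel. For $x^{\lambda} x_{21} x_{12} = c^{\lambda - 2\epsilon_1} c_{21} c_{12}$ a single application of the first statement of Lemma \ref{pairingwithc-s} yields
\[-\left(\begin{array}{c} (\lambda - 2\epsilon_1) + \epsilon_1 + \epsilon_2 \\ \pi \end{array}\right) = -\left(\begin{array}{c} \lambda - \epsilon_1 + \epsilon_2 \\ \pi \end{array}\right),\]
which is the required right-hand side.

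The whole argument is bookkeeping on top of Lemma \ref{pairingwithc-s}; the only point to watch is that each invocation of that lemma uses the correctly shifted weight, and that the elementary torus pairing is cited rather than re-derived. I do not anticipate any genuine obstacle.
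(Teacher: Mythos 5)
Your proposal is correct and follows essentially the same route as the paper: the paper's proof is precisely the one-line observation that the claims follow from the torus pairings $\left(\begin{array}{c} e \\ \pi \end{array}\right)(c^{\lambda})=\left(\begin{array}{c} \lambda \\ \pi \end{array}\right)$ and $\left(\begin{array}{c} e \\ \pi \end{array}\right)(c^{\lambda}c_{21}c_{12})=0$ together with Lemma \ref{pairingwithc-s}, which is exactly the bookkeeping you carry out (and your cancellation in the third formula and the shift $(\lambda-2\epsilon_1)+\epsilon_1+\epsilon_2=\lambda-\epsilon_1+\epsilon_2$ in the fourth are both right).
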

\begin{proof}
The proof follows from 
$\left(\begin{array}{c}
e \\
\pi
\end{array}\right)(c^{\lambda})=\left(\begin{array}{c}
\lambda \\
\pi
\end{array}\right)$,
$\left(\begin{array}{c}
e \\
\pi
\end{array}\right)(c^{\lambda}c_{21}c_{12})=0
$
and Lemma \ref{pairingwithc-s}.
\end{proof}
The map $e_{11}\mapsto e'_{11}=e_{11}+1, e_{22}\mapsto e'_{22}=e_{22}-1$ can be extended to an automorphism of algebra $Dist(T)$ which maps
$\left(\begin{array}{c}
e \\
\pi
\end{array}\right)$ to 
$\left(\begin{array}{c}
e' \\
\pi
\end{array}\right)=
\left(\begin{array}{c}
e +\epsilon_1-\epsilon_2\\
\pi
\end{array}\right)$.
\begin{lm}\label{asimplification}
For every $\lambda\in X^{(r)}(T)$ we have 
\[\left(\begin{array}{c}
e' \\
\pi
\end{array}\right)e_{21}e_{12}(x^{\lambda})=0 \mbox{ and } \left(\begin{array}{c}
e' \\
\pi
\end{array}\right)e_{21}e_{12}(x^{\lambda}x_{21}x_{12})=-\left(\begin{array}{c}
\lambda \\
\pi
\end{array}\right).
\]
\end{lm}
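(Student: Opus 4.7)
The plan is to reduce Lemma \ref{asimplification} to Lemma \ref{rightpairing} by expanding $\left(\begin{array}{c} e' \\ \pi \end{array}\right)$ in the standard binomial basis of $Dist(T)$ via the Chu--Vandermonde identity, then regrouping the output by a second application of Vandermonde.

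By definition, $\left(\begin{array}{c} e' \\ \pi \end{array}\right) = \left(\begin{array}{c} e+\alpha \\ \pi \end{array}\right) = \left(\begin{array}{c} e_{11}+1 \\ \pi_1 \end{array}\right)\left(\begin{array}{c} e_{22}-1 \\ \pi_2 \end{array}\right)$ as an element of $Dist(T)\subseteq Dist(G)$. Applying Vandermonde's convolution identity coordinate-wise gives
\[
\left(\begin{array}{c} e+\alpha \\ \pi \end{array}\right) = \sum_{\mu} C_{\mu}\left(\begin{array}{c} e \\ \mu \end{array}\right),\qquad
C_{\mu} := \left(\begin{array}{c} 1 \\ \pi_1-\mu_1 \end{array}\right)\left(\begin{array}{c} -1 \\ \pi_2-\mu_2 \end{array}\right),
\]
where $\left(\begin{array}{c} -1 \\ k \end{array}\right)=(-1)^k$ for $k\geq 0$ and vanishes otherwise, so only finitely many $C_\mu$ are non-zero. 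Multiplying on the right by $e_{21}e_{12}$ uses only $K$-linearity of multiplication in $Dist(G)$, giving
\[
\left(\begin{array}{c} e' \\ \pi \end{array}\right)e_{21}e_{12} = \sum_\mu C_\mu \left(\begin{array}{c} e \\ \mu \end{array}\right)e_{21}e_{12}.
\]

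Applying both sides to $f=x^\lambda$: Lemma \ref{rightpairing} shows each summand $\left(\begin{array}{c} e \\ \mu \end{array}\right)e_{21}e_{12}(x^\lambda)$ vanishes, which gives the first assertion. Applying both sides to $f=x^\lambda x_{21}x_{12}$: Lemma \ref{rightpairing} evaluates each summand to $-\left(\begin{array}{c} \lambda-\alpha \\ \mu \end{array}\right)$, so the sum equals
\[
-\sum_\mu C_\mu \left(\begin{array}{c} \lambda-\alpha \\ \mu \end{array}\right) = -\left(\begin{array}{c} (\lambda-\alpha)+\alpha \\ \pi \end{array}\right) = -\left(\begin{array}{c} \lambda \\ \pi \end{array}\right),
\]
where the middle equality is Chu--Vandermonde applied in reverse with $a=\alpha$ and $b=\lambda-\alpha$.

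The main (mild) obstacle is justifying the signed Vandermonde expansion in $Dist(T)$ and its finite support. This follows because the $\left(\begin{array}{c} e_{ii} \\ n \end{array}\right)$ form a $K$-basis of $Dist(T)$ and any polynomial identity over $\mathbb{Z}$ in $e_{11},e_{22}$ transfers to an identity of elements of $Dist(T)$; beyond this the proof is purely formal and treats Lemma \ref{rightpairing} as a black box.
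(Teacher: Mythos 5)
Your proof is correct and follows essentially the same route as the paper: expand $\left(\begin{array}{c} e' \\ \pi \end{array}\right)$ as a signed finite combination of the $\left(\begin{array}{c} e \\ \beta \end{array}\right)$ (your Chu--Vandermonde expansion with $y=\pm 1$ is exactly the paper's Carter--Lusztig identity $\sum_{0\leq i\leq k}(-1)^{k-i}\binom{x}{i}=\binom{x-1}{k}$ together with Pascal's rule), apply Lemma \ref{rightpairing} termwise, and re-sum the integer binomials. The only difference is packaging, not substance.
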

\begin{proof}
Using the formal identity 
$$
\sum_{0\leq i\leq k}(-1)^{k-i}\left(\begin{array}{c}
x \\
i
\end{array}\right)=\left(\begin{array}{c}
x-1 \\
k
\end{array}\right)
$$
that can be derived from the identites on p. 195 of \cite{cartlust},
we obtain
$$
\left(\begin{array}{c}
e' \\
\pi
\end{array}\right)=\sum_{\substack{\pi_1-1\leq\beta_1\leq\pi_1,\\ 0\leq\beta_2\leq\pi_2 }}(-1)^{\pi_2-\beta_2}\left(\begin{array}{c}
e \\
\beta
\end{array}\right).
$$
Using Lemma \ref{rightpairing} and the above formal identity, applied to integers, we conclude the proof.  
\end{proof}
Let $g_{\pi}$ denote the element
\[
\sum_{\pi\preceq\beta\preceq (q-1, q-1)}(-1)^{|\beta|}\left(\begin{array}{c}
\beta \\
\pi
\end{array}\right)\left(\begin{array}{c}
e' \\
\beta
\end{array}\right)e_{21}e_{12},
\]
and let $h_{\pi}$ denote the element
\[
\sum_{\pi\preceq\beta\preceq (q-1, q-1)}(-1)^{|\beta|}\left(\begin{array}{c}
\beta \\
\pi
\end{array}\right)\left(\begin{array}{c}
e \\
\beta
\end{array}\right),
\]
where $\pi\in X^{(r)}(T)$. Observe that $h_{(0, 0)}=\Delta^{(r)}_T$.
\begin{lm}\label{abinomialsum}
For any non-negative integers $t$ and $l$ such that $t\leq l$ we have
$$\sum_{t\leq i\leq l}(-1)^i\left(\begin{array}{c}
l\\
i
\end{array}\right)\left(\begin{array}{c}
i\\
t
\end{array}\right)=(-1)^t\delta_{tl}.
$$
\end{lm}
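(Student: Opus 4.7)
The plan is to reduce this to the basic vanishing of alternating binomial sums $(1-1)^N$ via the subset-of-a-subset (trinomial revision) identity
\[
\binom{l}{i}\binom{i}{t}=\binom{l}{t}\binom{l-t}{i-t},
\]
which holds for all integers $0\leq t\leq i\leq l$.

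First I would apply this identity to rewrite each summand, pulling the factor $\binom{l}{t}$, which does not depend on $i$, out of the sum. Then I would substitute $j=i-t$ so that the summation index runs from $0$ to $l-t$, and factor out the sign $(-1)^t$. The sum then becomes
\[
\sum_{t\leq i\leq l}(-1)^i\binom{l}{i}\binom{i}{t}
=(-1)^t\binom{l}{t}\sum_{j=0}^{l-t}(-1)^j\binom{l-t}{j}.
\]
By the binomial theorem the inner sum equals $(1-1)^{l-t}$, which is $0$ if $l>t$ and $1$ if $l=t$. Since $\binom{l}{l}=1$, the whole expression equals $(-1)^t\delta_{tl}$, as required.

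There is essentially no obstacle here; the only nontrivial step is recalling the subset-of-a-subset identity, after which the calculation is immediate. (The convention $\binom{l}{t}=0$ for $t>l$ makes the equality trivial in the excluded range $t>l$, so restricting to $t\leq l$ as in the statement is harmless.)
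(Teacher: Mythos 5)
Your proof is correct and is essentially the same as the paper's: the paper also rewrites $\binom{l}{i}\binom{i}{t}$ as $\frac{l!}{t!(l-i)!(i-t)!}$ (i.e.\ the subset-of-a-subset identity), factors out $(-1)^t\binom{l}{t}$, substitutes $j=i-t$, $s=l-t$, and invokes the vanishing of the alternating sum $\sum_j(-1)^j\binom{s}{j}$ for $s>0$. No difference in substance.
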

\begin{proof}
The statement follows from
\[\begin{aligned}
&\sum_{t\leq i\leq l}(-1)^i\left(\begin{array}{c}
l\\
i
\end{array}\right)\left(\begin{array}{c}
i\\
t
\end{array}\right)=\sum_{t\leq i\leq l}(-1)^i\frac{l!}{t!(l-i)!(i-t)!}\\
&=
(-1)^t\left(\begin{array}{c}
l\\
t
\end{array}\right)\sum_{0\leq j\leq s}(-1)^s\left(\begin{array}{c}
s\\
j
\end{array}\right),
\end{aligned}
\]
where $s=l-t, j=i-t$.
\end{proof}
\begin{pr}\label{dualbases}
For every $\pi, \lambda\in X^{(r)}(T)$ we have 
\[g_{\pi}(x^{\lambda}x_{21}x_{12})=(-1)^{|\pi|+1}\delta_{\pi, \lambda} \mbox{ and } g_{\pi}(x^{\lambda})=0.\]
Additionally, 
\[h_{\pi}(x^{\lambda})=(-1)^{|\pi|}\delta_{\pi, \lambda} \mbox{ and } h_{\pi}(x^{\lambda}x_{21}x_{12})=0.\]
\end{pr}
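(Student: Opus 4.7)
The plan is to compute each of the four pairings directly, reducing everything to the evaluations furnished by Lemma \ref{rightpairing} and Lemma \ref{asimplification}, and then to collapse the resulting double sum on $\beta$ to a Kronecker delta by applying Lemma \ref{abinomialsum} separately in each coordinate.

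First, two of the four identities are essentially immediate. By Lemma \ref{asimplification}, every term $\binom{e'}{\beta}e_{21}e_{12}(x^{\lambda})$ in the expansion of $g_{\pi}(x^{\lambda})$ vanishes, giving $g_{\pi}(x^{\lambda})=0$. Dually, by Lemma \ref{rightpairing}, every term $\binom{e}{\beta}(x^{\lambda}x_{21}x_{12})$ in the expansion of $h_{\pi}(x^{\lambda}x_{21}x_{12})$ vanishes, giving $h_{\pi}(x^{\lambda}x_{21}x_{12})=0$.

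For the remaining two identities I would first handle $h_{\pi}(x^{\lambda})$. Inserting $\binom{e}{\beta}(x^{\lambda})=\binom{\lambda}{\beta}$ from Lemma \ref{rightpairing} and factoring $\binom{\beta}{\pi}=\binom{\beta_1}{\pi_1}\binom{\beta_2}{\pi_2}$ and $\binom{\lambda}{\beta}=\binom{\lambda_1}{\beta_1}\binom{\lambda_2}{\beta_2}$ yields
\[
h_{\pi}(x^{\lambda})=\prod_{i=1,2}\sum_{\pi_i\leq\beta_i\leq q-1}(-1)^{\beta_i}\binom{\beta_i}{\pi_i}\binom{\lambda_i}{\beta_i}.
\]
Since $\lambda\in X^{(r)}(T)$ forces $\lambda_i\leq q-1$, the binomial $\binom{\lambda_i}{\beta_i}$ is zero for $\beta_i>\lambda_i$, so the upper limit may be replaced by $\lambda_i$ (and the sum is empty unless $\pi_i\leq\lambda_i$). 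Lemma \ref{abinomialsum} then collapses each factor to $(-1)^{\pi_i}\delta_{\pi_i,\lambda_i}$, giving $h_{\pi}(x^{\lambda})=(-1)^{|\pi|}\delta_{\pi,\lambda}$. The computation of $g_{\pi}(x^{\lambda}x_{21}x_{12})$ is identical, except that Lemma \ref{asimplification} now contributes an extra sign: $\binom{e'}{\beta}e_{21}e_{12}(x^{\lambda}x_{21}x_{12})=-\binom{\lambda}{\beta}$, so one arrives at $g_{\pi}(x^{\lambda}x_{21}x_{12})=(-1)^{|\pi|+1}\delta_{\pi,\lambda}$.

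There is no real obstacle here; the proof is a bookkeeping exercise. The only point requiring care is noticing that the two-dimensional sum over $\beta$ factors as a product, and that the constraint $\lambda\in X^{(r)}(T)$ is precisely what is needed to let Lemma \ref{abinomialsum} apply coordinatewise. Everything else is direct substitution of the four pairing formulas already established.
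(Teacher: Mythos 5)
Your proposal is correct and is essentially the paper's own argument: the paper likewise dispatches $g_{\pi}(x^{\lambda})=0$ and $h_{\pi}(x^{\lambda}x_{21}x_{12})=0$ directly from Lemmas \ref{asimplification} and \ref{rightpairing}, and reduces the other two pairings to the sum $\sum_{\pi\preceq\beta\preceq\lambda}(-1)^{|\beta|}\binom{\beta}{\pi}\binom{\lambda}{\beta}$, collapsed by Lemma \ref{abinomialsum}. Your explicit coordinatewise factorization of the double sum is exactly how the (scalar) Lemma \ref{abinomialsum} is meant to be applied, so you have just filled in details the paper leaves implicit.
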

\begin{proof}
Combining Lemmas \ref{asimplification} and \ref{abinomialsum} we derive
\[
g_{\pi}(x^{\lambda}x_{21}x_{12})=
-\sum_{\pi\preceq\beta\preceq\lambda}(-1)^{|\beta|}\left(\begin{array}{c}
\beta \\
\pi
\end{array}\right)\left(\begin{array}{c}
\lambda \\
\beta
\end{array}\right)=(-1)^{|\pi|+1}\delta_{\pi, \lambda}
\mbox{ and }  g_{\pi}(x^{\lambda})=0.
\]
To prove the second statement, one has to combine Lemmas \ref{rightpairing} and \ref{abinomialsum}.
\end{proof}
\begin{lm}\label{newformofintegral}
We can identify the integral $\nu_r$ as 
\[
\nu_r=\sum_{\substack{0\leq\beta_1\leq q-1, \\ \beta_2 =q-1}}(-1)^{|\beta|+1}\beta_1
\left(\begin{array}{c}
e' \\
\beta
\end{array}\right)e_{21}e_{12}=-g_{1, q-1}.
\]
\end{lm}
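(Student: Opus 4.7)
The second equality in the claim is a routine unpacking of the definition of $g_\pi$: the range $(1,q-1)\preceq\beta\preceq(q-1,q-1)$ forces $\beta_2=q-1$ and $\beta_1\in\{1,\ldots,q-1\}$, with $\binom{\beta}{(1,q-1)}=\beta_1$, so combining the overall minus sign with $(-1)^{|\beta|}$ yields the coefficient $(-1)^{|\beta|+1}\beta_1$ (and the $\beta_1=0$ term in the displayed sum vanishes by virtue of the $\beta_1$ factor). The content of the lemma is thus the first equality, which identifies $\nu_r$ as an explicit element of $Dist(T_r)\cdot e_{21}e_{12}$.

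The plan is to put $\nu_r$ in a tractable form and then verify both sides of the claimed identity agree on the basis $\{x^\lambda,\,x^\lambda x_{21}x_{12}\}$ of the even part of $K[G_r]$. First, Theorem~\ref{whatintegralis} applied to $GL(1|1)$ — where $u_0^\pm=1$ (empty products, since there are no even off-diagonal roots) and $u_1^+=e_{12}$, $u_1^-=e_{21}$ — yields $\nu_r=e_{12}e_{21}\Delta_T^{(r)}$. The super-bracket relation $e_{12}e_{21}+e_{21}e_{12}=e_{11}+e_{22}$ together with the identity $(e_{11}+e_{22})\Delta_T^{(r)}=0$ then simplifies this to $\nu_r=-\Delta_T^{(r)}e_{21}e_{12}$. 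The vanishing is justified as follows: viewed in $Dist(T_r)$, the functional $e_{ii}\Delta_T^{(r)}$ evaluates on the group-like $c^\nu\in K[T_r]$ to $\nu_i\cdot\delta_{\nu\equiv 0\bmod q}$, and the only $\nu\in X^{(r)}(T)$ satisfying $\nu\equiv 0\bmod q$ is $\nu=0$, forcing $\nu_i=0$; one also checks $e_{ii}$ annihilates the basis elements of $K[G_r]$ carrying $c_{12}c_{21}$-content.

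Both $\nu_r$ and the claimed sum now lie in $Dist(T_r)\cdot e_{21}e_{12}$, so equality can be tested by evaluating on the basis of $K[G_r]$. Both sides are even and so annihilate the odd basis elements $x^\lambda x_{21}$ and $x^\lambda x_{12}$. On $x^\lambda$, Lemmas~\ref{rightpairing} and~\ref{asimplification} give $\binom{e}{s}e_{21}e_{12}(x^\lambda)=\binom{e'}{\beta}e_{21}e_{12}(x^\lambda)=0$, so both sides vanish. On $x^\lambda x_{21}x_{12}$, Lemma~\ref{rightpairing} yields
\[
\nu_r(x^\lambda x_{21}x_{12})=\sum_s(-1)^{|s|}\binom{\lambda-\epsilon_1+\epsilon_2}{s},
\]
which factors coordinate-wise and collapses via the partial-sum binomial identity (with Lucas's theorem handling the boundary case $\lambda_2+1=q$) to $\delta_{\lambda,(1,q-1)}$. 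For the asserted sum, Lemma~\ref{asimplification} together with the identity $\sum_{\beta_1}(-1)^{\beta_1}\beta_1\binom{\lambda_1}{\beta_1}=-\delta_{\lambda_1,1}$ — obtained from $\beta_1\binom{\lambda_1}{\beta_1}=\lambda_1\binom{\lambda_1-1}{\beta_1-1}$ and the same partial-sum identity — produces the same indicator. The main technical obstacle is the careful sign bookkeeping in this final comparison, where the shift $e'=e+\alpha$ in the binomials and the sign $(-1)^{|\beta|+1}$ must align with the $(-1)^{|s|}$ signs coming from the expansion of $\Delta_T^{(r)}$.
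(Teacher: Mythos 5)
Your handling of the second equality and your overall strategy --- normalize $\nu_r$ via Theorem \ref{whatintegralis} and then test the identity by evaluating both sides against the basis $\{x^{\lambda},\,x^{\lambda}x_{21}x_{12}\}$ --- is a legitimate route, and it is genuinely different from the paper's. The paper never evaluates anything on $K[G_r]$: it substitutes the change-of-basis identity $\binom{e}{\pi}=\sum_{0\leq\beta_1\leq\pi_1,\ \pi_2-1\leq\beta_2\leq\pi_2}(-1)^{\pi_1-\beta_1}\binom{e'}{\beta}$ into the definition of $\Delta_T^{(r)}$, observes that the coefficients with $\beta_2<q-1$ cancel in pairs while the coefficient of $\binom{e'}{\beta}$ with $\beta_2=q-1$ is $(-1)^{|\beta|}(q-\beta_1)$, and reduces $q-\beta_1\equiv-\beta_1\pmod p$. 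Your route trades that formal manipulation for dual-basis computations (Lemmas \ref{rightpairing}, \ref{asimplification} and the partial-sum binomial identity), which is more work but also re-derives the content of Proposition \ref{dualbases}.

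The step you defer as ``careful sign bookkeeping'' is, however, exactly where the argument fails. From $\nu_r=e_{12}e_{21}\Delta_T^{(r)}=-\Delta_T^{(r)}e_{21}e_{12}$ your own formula gives $\nu_r(x^{\lambda}x_{21}x_{12})=\sum_s(-1)^{|s|}\binom{\lambda-\epsilon_1+\epsilon_2}{s}=+\delta_{\lambda,(1,q-1)}$, whereas the right-hand side $-g_{1,q-1}$ evaluates (by Lemma \ref{asimplification} together with your identity $\sum_{\beta_1}(-1)^{\beta_1}\beta_1\binom{\lambda_1}{\beta_1}=-\delta_{\lambda_1,1}$, or directly from Proposition \ref{dualbases}) to $-(-1)^{|(1,q-1)|+1}\delta_{\lambda,(1,q-1)}=-\delta_{\lambda,(1,q-1)}$. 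The two indicators do not agree; they differ by a sign, so what your computation actually proves is $e_{12}e_{21}\Delta_T^{(r)}=+g_{1,q-1}$, equivalently $\Delta_T^{(r)}e_{21}e_{12}=-g_{1,q-1}$. This discrepancy is not of your making: the normalization of $\nu_r$ in the lemma is the negative of the one fixed in Theorem \ref{whatintegralis} (the paper's proof tacitly identifies $\nu_r$ with $\Delta_T^{(r)}e_{21}e_{12}$ rather than with $e_{12}e_{21}\Delta_T^{(r)}$, and the anticommutation $e_{12}e_{21}\Delta_T^{(r)}=-e_{21}e_{12}\Delta_T^{(r)}$ you correctly derived costs a sign). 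Since a right integral is determined only up to a nonzero scalar, the statement and its later uses are unaffected; but as written your proof asserts an equality of evaluations that is false, so you must either start from the normalization $\nu_r=\Delta_T^{(r)}e_{21}e_{12}$ or carry the extra minus sign through explicitly rather than declaring the indicators equal.
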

\begin{proof}
Using the formula
\[
\left(\begin{array}{c}
e \\
\pi
\end{array}\right)=
\sum_{\substack{0\leq\beta_1\leq\pi_1, \\ \pi_2 -1\leq\beta_2\leq\pi_2}}
(-1)^{\pi_1-\beta_1}\left(\begin{array}{c}
e' \\
\beta
\end{array}\right)
\]
that is symmetric to the formula from the proof of Lemma \ref{asimplification}, we obtain
\[\begin{aligned}
\Delta^{(r)}_T &=\sum_{0\preceq\pi\preceq (q-1, q-1)}(-1)^{|\pi|}\sum_{0\leq\beta_1\leq\pi_1 , \pi_2-1\leq\beta_2\leq\pi_2}
(-1)^{\pi_1-\beta_1}\left(\begin{array}{c}
e' \\
\beta
\end{array}\right)\\
&=\sum_{0\leq\beta_1\leq q-1, \ \beta_2 =q-1}(-1)^{|\beta|}(q-\beta_1)
\left(\begin{array}{c}
e' \\
\beta
\end{array}\right)=-g_{1,q-1}.
\end{aligned}
\]
\end{proof}

\begin{lm}\label{condemnedlemma}
We have the following identities.
\[\nu_r x^{\pi}=
(-1)^{|\overline{\alpha-\pi}|}g_{\overline{\alpha-\pi}}\]
and 
\[
\nu_r x^{\pi}x_{21}x_{12}=
(-1)^{|\overline{\alpha-\pi}|+1}h_{\overline{\alpha-\pi}}\]
\end{lm}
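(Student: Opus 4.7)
The plan is to prove both identities simultaneously by evaluating each side on the monomial basis of $K[G_r]$ given by $\{x^\lambda, x^\lambda x_{21}x_{12} : \lambda \in X^{(r)}(T)\}$. Recall that $\nu_r x^\pi \in Dist(G_r) = K[G_r]^*$ is defined, via the right $K[G_r]$-module structure on $K[G_r]^*$ from Lemma \ref{larsentheorem}, by $(\nu_r x^\pi)(f) = \nu_r(x^\pi f)$ for $f \in K[G_r]$. The three ingredients to combine are: (i) Lemma \ref{newformofintegral}, which rewrites the integral as $\nu_r = -g_{1,q-1}$; (ii) Proposition \ref{dualbases}, which computes the pairings of $g_\pi$ and $h_\pi$ with the basis; and (iii) explicit multiplication formulas in $K[G_r]$.

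For (iii) the needed facts are $x^\pi \cdot x^\lambda = x^{\overline{\pi+\lambda}}$ and $x^\pi \cdot (x^\lambda x_{21}x_{12}) = x^{\overline{\pi+\lambda}} x_{21}x_{12}$ in $K[G_r]$, together with the nilpotency $(x_{21}x_{12})^2 = 0$. The first two reduce to showing $x_{11}^q = 1$ and $x_{22}^q = 1$ in $K[G_r]$. Since $x_{11} = c_{11}$ this is immediate from $t_{11}^q = 0$ in $K[G_r]$, and for $x_{22} = c_{22} - u$ with $u = c_{11}^{-1} c_{21} c_{12}$ we have $u^2 = 0$ because $c_{21}^2 = c_{12}^2 = 0$; then $(c_{22} - u)^q = c_{22}^q - u^q = 1$ in characteristic $p$ since the intermediate binomial coefficients $\binom{q}{k}$ for $0 < k < q$ vanish mod $p$. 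The relation $(x_{21}x_{12})^2 = 0$ follows by supercommutativity from $x_{21}^2 = x_{12}^2 = 0$.

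Putting everything together, for the first identity one computes
\[
(\nu_r x^\pi)(x^\lambda) = -g_{1,q-1}(x^{\overline{\pi+\lambda}}) = 0,
\]
\[
(\nu_r x^\pi)(x^\lambda x_{21}x_{12}) = -g_{1,q-1}(x^{\overline{\pi+\lambda}} x_{21}x_{12}) = -(-1)^{q+1}\,\delta_{(1,q-1),\,\overline{\pi+\lambda}},
\]
which is non-zero exactly when $\lambda = \overline{\alpha - \pi}$ (using $\alpha = \epsilon_1 - \epsilon_2 \equiv (1, q-1)\pmod q$), and matches $(-1)^{|\overline{\alpha-\pi}|} g_{\overline{\alpha-\pi}}$ tested on the same basis element via Proposition \ref{dualbases}. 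For the second identity, the same recipe applies: one pairs $\nu_r x^\pi x_{21}x_{12}$ against $x^\lambda$ (where the nontrivial value again arises only when $\lambda = \overline{\alpha-\pi}$) and against $x^\lambda x_{21}x_{12}$ (which vanishes because $(x_{21}x_{12})^2 = 0$), and compares with the corresponding values of $(-1)^{|\overline{\alpha-\pi}|+1} h_{\overline{\alpha-\pi}}$ from Proposition \ref{dualbases}.

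The work is essentially mechanical once the multiplication rules are in hand; the only genuinely delicate point is bookkeeping the sign $(-1)^q$ coming from $|(1,q-1)| = q$ against the sign $(-1)^{|\overline{\alpha-\pi}|}$ on the right-hand side, which I expect to be the most error-prone step. Under the standing assumption that $p$ is odd (so that supercommutativity gives $x_{21}^2 = 0$, etc.) we have $(-1)^{q} = -1$ and all signs line up cleanly.
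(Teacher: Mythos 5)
Your proof is correct and follows essentially the same route as the paper: both arguments evaluate $\nu_r x^{\pi}$ and $\nu_r x^{\pi}x_{21}x_{12}$ against the basis $\{x^{\lambda},\ x^{\lambda}x_{21}x_{12}\}$ of $K[G_r]$, rewrite $\nu_r=-g_{1,q-1}$ via Lemma \ref{newformofintegral}, and match the resulting values $-\delta_{(1,q-1),\overline{\pi+\lambda}}$ with those of $(-1)^{|\overline{\alpha-\pi}|}g_{\overline{\alpha-\pi}}$ and $(-1)^{|\overline{\alpha-\pi}|+1}h_{\overline{\alpha-\pi}}$ supplied by Proposition \ref{dualbases}. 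The only difference is that you make explicit the multiplication rules $x^{\pi}x^{\lambda}=x^{\overline{\pi+\lambda}}$ and $(x_{21}x_{12})^2=0$, which the paper uses implicitly.
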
 
\begin{proof}
The first statement follows from Proposition \ref{dualbases} and
\[(\nu_r x^{\pi})(x^{\lambda}x_{21}x_{12})=-g_{1, q-1}(x^{\pi+\lambda}x_{21}x_{12})=-\delta^{(q)}_{1, \pi_1+\lambda_1}\delta^{(q)}_{q-1, \pi_2+\lambda_2},\]
and the second statement follows from Proposition \ref{dualbases} and 
\[(\nu_r x^{\pi}x_{21}x_{12})(x^{\lambda})=-g_{1, q-1}(x^{\pi+\lambda}x_{21}x_{12})=-\delta^{(q)}_{1, \pi_1+\lambda_1}\delta^{(q)}_{q-1, \pi_2+\lambda_2}.\]
\end{proof}

The center $Z=\cup_{r>1} Z_r$ of $Dist(G)$ is described in the following theorem.
\begin{tr}\label{center}
The generators of $Z_r=Dist(G_r)^G$ correspond to orbits $O_t$ for $0\leq t\leq q-1$.

If $p| t$, then the generators are all elements $g_{\lambda}$ for $\lambda\in O_t$ 
together with one additional element $\sum_{\lambda\in O_t} (-1)^{|\lambda|}h_{\lambda}$.

If $p\not| t$, then the generators are
$(-1)^{|\lambda|+|\overline{\lambda+\alpha}|}g_{\overline{\lambda+\alpha}}-g_{\lambda}+|\lambda|h_{\lambda}$ 
for each $\lambda\in O_t$. 
\end{tr}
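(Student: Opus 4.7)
The plan is to apply the superspace isomorphism $K[G_r]^G \xrightarrow{\sim} Z_r$, $f\mapsto \nu_r f$ of Corollary \ref{corko}, to the explicit generators of $K[G_r]^G$ produced in Theorem \ref{generatorsinthesimplestcase}, and then rewrite each image in terms of the elements $g_\mu$ and $h_\mu$ of Proposition \ref{dualbases} by using the formulas of Lemma \ref{condemnedlemma}. The union statement $Z=\bigcup_{r\geq 1}Z_r$ then delivers the description of $Z$.

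Before computing, I would record a combinatorial bookkeeping step. The map $\lambda\mapsto\overline{\alpha-\lambda}$ is an involution of $X^{(r)}(T)$, and since $|\overline{\alpha-\lambda}|\equiv -|\lambda|\pmod q$, it carries the orbit $O_{t'}$ bijectively onto $O_t$ with $t=\overline{-t'}$. Because $q=p^r$, the condition $p\mid t'$ is equivalent to $p\mid t$, so the orbit partition indexing the generators in Theorem \ref{generatorsinthesimplestcase} matches, after this relabelling, the orbit partition used in the present statement.

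For an orbit $O_{t'}$ with $p\mid t'$, Lemma \ref{condemnedlemma} gives at once
\[
\nu_r x^\lambda \;=\; (-1)^{|\overline{\alpha-\lambda}|}\,g_{\overline{\alpha-\lambda}}, \qquad
\nu_r \sigma_{t'} \;=\; -\sum_{\mu\in O_t}(-1)^{|\mu|}\,h_\mu,
\]
so, up to units in $K^\times$, one recovers both the family $g_\mu$ for $\mu\in O_t$ and the element $\sum_{\mu\in O_t}(-1)^{|\mu|}h_\mu$ claimed in the theorem. For an orbit $O_{t'}$ with $p\nmid t'$, I would expand
\[
\nu_r\gamma_\lambda \;=\; \nu_r x^\lambda - \nu_r x^{\overline{\lambda+\alpha}} + |\lambda|\,\nu_r x^{\overline{\lambda+\alpha}} x_{21}x_{12},
\]
set $\mu=\overline{-\lambda}\in O_t$, and verify by a direct computation modulo $q$ that $\overline{\alpha-\overline{\lambda+\alpha}}=\mu$ and $\overline{\alpha-\lambda}=\overline{\mu+\alpha}$. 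Substituting into Lemma \ref{condemnedlemma} produces
\[
\nu_r\gamma_\lambda \;=\; (-1)^{|\mu|}\bigl[(-1)^{|\mu|+|\overline{\mu+\alpha}|}\,g_{\overline{\mu+\alpha}} - g_\mu - |\lambda|\,h_\mu\bigr].
\]

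The main (and essentially only) subtlety is the coefficient $|\lambda|$: each coordinate satisfies $\overline{-\lambda_i}+\lambda_i\in\{0,q\}$, and since $q=p^r$ vanishes in $K$, we obtain $|\mu|+|\lambda|\equiv 0\pmod p$, whence $-|\lambda|\,h_\mu=|\mu|\,h_\mu$ in $Dist(G_r)$. Thus $\nu_r\gamma_\lambda$ coincides, up to the unit $(-1)^{|\mu|}$, with the expression $(-1)^{|\mu|+|\overline{\mu+\alpha}|}\,g_{\overline{\mu+\alpha}} - g_\mu + |\mu|\,h_\mu$ appearing in the statement. Reindexing $\mu\in O_t$ as $\lambda$ gives the generators for the case $p\nmid t$, and passing to the union over $r\geq 1$ completes the description of the center of $Dist(G)$.
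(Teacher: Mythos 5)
Your proposal is correct and follows exactly the paper's route: the paper's proof is precisely the one-line combination of Corollary \ref{corko}, Theorem \ref{generatorsinthesimplestcase} and Lemma \ref{condemnedlemma}, and you have merely (and correctly) filled in the reindexing $\lambda\mapsto\overline{\alpha-\lambda}$, $\mu=\overline{-\lambda}$ and the sign/coefficient bookkeeping, including the key observation that $-|\lambda|\equiv|\mu|\pmod p$.
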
 
\begin{proof} It follows from Corollary \ref{corko} , Theorem \ref{generatorsinthesimplestcase} and Lemma \ref{condemnedlemma}.
\end{proof}

\begin{tr}\label{harish-chandrapolynomials}
The space $I_r$ is generated by the following elements corresponding to orbits $O_t$ for
$0\leq t\leq q-1$.

If $p| t$, then we have only one generator $\sum_{\lambda\in O_t} (-1)^{|\lambda|}h_{\lambda}$.

If $p\not| t$, then the generators are all elements $h_{\lambda}$ for $\lambda\in O_t$.
\end{tr}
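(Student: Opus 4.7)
The plan is to apply the Harish-Chandra projection $h$ directly to the explicit list of generators of $Z_r$ given by Theorem \ref{center}, and read off what survives in $Dist(T_r)$.

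The key observation is that $h(g_\pi)=0$ for every $\pi\in X^{(r)}(T)$. Indeed, from the definition of $g_\pi$ one can factor
\[
g_\pi=\Bigl(\sum_{\pi\preceq\beta\preceq(q-1,q-1)}(-1)^{|\beta|}\binom{\beta}{\pi}\binom{e'}{\beta}\Bigr)\,e_{21}e_{12},
\]
and since $e_{12}$ is an odd primitive element one has $e_{12}\in Dist(U^+)^+$. Hence $g_\pi\in Dist(G)\cdot Dist(U^+)^+\subseteq J$, so $h$ kills it. On the other hand $h_\pi\in Dist(T)$ by construction, so $h(h_\pi)=h_\pi$.

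With these two facts in hand I would simply substitute into Theorem \ref{center}. For an orbit $O_t$ with $p\mid t$, the generators $g_\lambda$ ($\lambda\in O_t$) all map to zero, while the single generator $\sum_{\lambda\in O_t}(-1)^{|\lambda|}h_\lambda$ is preserved, yielding the one generator stated for this case. For an orbit $O_t$ with $p\nmid t$, the generator associated to $\lambda$ maps to
\[
(-1)^{|\lambda|+|\overline{\lambda+\alpha}|}h(g_{\overline{\lambda+\alpha}})-h(g_\lambda)+|\lambda|\,h_\lambda=|\lambda|\,h_\lambda.
\]
Because $|\lambda|\equiv t\pmod{p^r}$ and $p\nmid t$, the integer $|\lambda|$ is nonzero in $K$ and therefore invertible, so $|\lambda|h_\lambda$ and $h_\lambda$ generate the same subspace. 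This produces all $h_\lambda$ with $\lambda\in O_t$.

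The only step requiring any thought is the factorization $g_\pi=(\cdots)e_{21}e_{12}$ that places $g_\pi$ into $J$; everything else is a bookkeeping substitution. There is no issue about linear independence of the listed generators of $I_r$ in $Dist(T_r)$ since Proposition \ref{dualbases} already exhibits the $h_\lambda$ as part of a dual basis.
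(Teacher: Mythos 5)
Your proposal is correct and follows essentially the same route as the paper: the paper's proof likewise reduces to observing that under $h$ the $g_\pi$ (which carry the factor $e_{21}e_{12}$) die while the $h_\pi\in Dist(T_r)$ survive, and then reads off the images of the generators from Theorem \ref{center}. You have merely made explicit the two small points the paper leaves implicit, namely that $e_{12}\in Dist(U^+)^+$ forces $g_\pi\in J$, and that $|\lambda|$ is invertible in $K$ when $p\nmid t$.
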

\begin{proof}
Recall that every summand of a central element that is a multiple of $x_{21}x_{12}$ is mapped to zero by the Harish-Chandra homomorphism.
Now, the proof follows from Theorem \ref{center}.
\end{proof}

\section{Blocks}

Assume that $\lambda$ is a dominant weight of $G$ and recall $\alpha=\epsilon_1-\epsilon_2$.

Define the {\it usual block} $B(\lambda)$ of $G$ as a set of weights $\mu\in X(T)^+$ such that there is a sequence 
$\lambda=\lambda_1$, \ldots, $\lambda_i$, \ldots, $\lambda_r=\mu$ satisfying $Ext^1_G(L(\lambda_i), L(\lambda_{i+1}))\neq 0$ or $Ext^1_G(L(\lambda_{i+1}), L(\lambda_{i}))\neq 0$
for each $i=1, \ldots, r-1$. 

Define the {\it Harish-Chandra block} $HC(\lambda)$ as a set of weights $\mu\in X(T)^+$ such that $z_{\lambda}=z_{\mu}$ for every central element $z$. 
As we have seen earlier, this condition is equivalent to $hv_{\lambda}=hv_{\mu}$ for every $h\in I$. 

Recall a variant of a {\it linkage principle} from \cite{kuj2}. 
Let $P=\oplus_{r\in\mathbb{Z}/p\mathbb{Z}}\mathbb{Z}\Lambda_r\oplus\mathbb{Z}\delta$ be a weight lattice of affine Lie algebra $\hat{\mathfrak{sl}}_p(\mathbb{C})$ and 
let $\mbox{wt} : X(T)\to P$ be a function related to the {\it crystal structure} on $X(T)$. 
Define the {\it Kujawa block} $K(\lambda)$ as a set of weights
$\mu\in X(T)^+$ such that $\mbox{wt}(\lambda)=\mbox{wt}(\mu)$. 

It is natural to ask about the relationship between $B(\lambda)$, $HC(\lambda)$, and $K(\lambda)$ for $\lambda\in X(T)^+$.
Clearly, $B(\lambda)\subseteq HC(\lambda)$ for every $\lambda\in X(T)^+$. Since every extension of $L(\mu)$ by $L(\lambda)$ is finite-dimensional, hence integrable, we have an isomorphism  $Ext^1_G(L(\lambda),L(\mu))\simeq Ext^1_{Dist(G)}(L(\lambda), L(\mu))$.
Then Theorem 3.7 of \cite{kuj2} implies that, for every $\lambda, \mu\in X(T)^+$, $Ext^1_G(L(\lambda), L(\mu))\neq 0$ implies $\mbox{wt}(\lambda)=\mbox{wt}(\mu)$.
Therefore $B(\lambda)\subseteq K(\lambda)$ for every $\lambda\in X(T)^+$.

We will now investigate the relationship between $B(\lambda)$, $HC(\lambda)$ and $K(\lambda)$ further in the simplest case when $G=GL(1|1)$.
From now on assume $G=GL(1|1)$.

\begin{lm}\label{justblocks}
If $p| |\lambda|$, then $B(\lambda)=\lambda +\mathbb{Z}\alpha$. If $p\not||\lambda|$, then $B(\lambda)=\{\lambda\}$.
\end{lm}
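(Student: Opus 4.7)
The plan is to sandwich $B(\lambda)$ between an upper bound coming from the containment $B(\lambda)\subseteq HC(\lambda)$ and a lower bound realized by explicit non-split extensions between simple modules.

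First, I would observe that $e_{11}+e_{22}$ is central in $Dist(G)$ and, by the Vandermonde identity, all its divided powers $\binom{e_{11}+e_{22}}{n}=\sum_{k=0}^{n}\binom{e_{11}}{k}\binom{e_{22}}{n-k}$ already lie in $Dist(G)$ and remain central there. They act on the highest-weight vector $v_{\mu}$ of $L(\mu)$ by the scalar $\binom{|\mu|}{n}$. By Lucas' theorem the values $\binom{|\mu|}{p^{k}}\pmod p$ recover the base-$p$ digits of the integer $|\mu|$, so this family of central elements separates simples by the invariant $|\mu|$. Since $\mu-\lambda\in\mathbb{Z}\alpha$ iff $|\mu|=|\lambda|$, we obtain $B(\lambda)\subseteq HC(\lambda)\subseteq\lambda+\mathbb{Z}\alpha$ in every case.

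When $p\nmid|\lambda|$ I would sharpen this to $HC(\lambda)=\{\lambda\}$ using the generators of $Z_{r}$ listed in Theorem \ref{center}. For each $r\geq 1$, let $\overline{\lambda}\in X^{(r)}(T)$ denote the reduction of $\lambda$ modulo $q=p^{r}$ and take the generator $z_{r}=(-1)^{|\overline{\lambda}|+|\overline{\lambda+\alpha}|}g_{\overline{\lambda+\alpha}}-g_{\overline{\lambda}}+|\overline{\lambda}|h_{\overline{\lambda}}\in Z_{r}$. Every $g_{\pi}$ ends in $e_{12}$, which kills $v_{\mu}$, so only the $h_{\overline{\lambda}}$-term contributes to $z_{r}\cdot v_{\mu}$; by Proposition \ref{dualbases} this gives the scalar $|\overline{\lambda}|(-1)^{|\overline{\lambda}|}\delta_{\overline{\lambda},\overline{\mu}}$, which is a unit in $K$ precisely when $\overline{\mu}=\overline{\lambda}$. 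Thus $HC_{r}(\lambda)\subseteq\overline{\lambda}+qX(T)$, and letting $r\to\infty$ forces $\mu=\lambda$. Combined with $\lambda\in B(\lambda)$, this yields $B(\lambda)=\{\lambda\}$.

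When $p\mid|\lambda|$ the upper bound above is already $\lambda+\mathbb{Z}\alpha$, so it remains to realize every $\lambda+k\alpha$ inside $B(\lambda)$. I would use the two-dimensional Verma-like module $M(\lambda)=Dist(G)\otimes_{Dist(B^{+})}K_{\lambda}$ with basis $\{v_{\lambda},\,e_{21}v_{\lambda}\}$. The super-bracket relation $\{e_{12},e_{21}\}=e_{11}+e_{22}$ forces $e_{12}(e_{21}v_{\lambda})=|\lambda|v_{\lambda}=0$ in $K$, so $Ke_{21}v_{\lambda}\simeq L(\lambda-\alpha)$ (up to parity) is a submodule and $M(\lambda)/Ke_{21}v_{\lambda}\simeq L(\lambda)$. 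The extension is non-split because the $\lambda$-weight space of $M(\lambda)$ is one-dimensional and is not preserved by $e_{21}$, so $Ext^{1}_{G}(L(\lambda),L(\lambda-\alpha))\neq 0$. A symmetric Verma built from $B^{-}$ produces a non-split extension with $L(\lambda+\alpha)$, and iterating in both directions covers every $\lambda+k\alpha$, $k\in\mathbb{Z}$.

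I expect the main technical step to be the sharp Harish-Chandra computation in the case $p\nmid|\lambda|$, where one must isolate the generator of $Z_{r}$ that acts as a unit on $v_{\lambda}$ and then piece together the residues of $\lambda$ and $\mu$ as $r$ grows. The Verma-module construction and the Vandermonde-based upper bound are routine weight calculations.
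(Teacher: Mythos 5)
Your proof is correct, but it takes a genuinely different route from the paper's. The paper disposes of both cases in a few lines by quoting Lemma 7.1(c) of \cite{marzub}, which describes the injective envelope $I(\lambda)$ for $GL(1|1)$: when $p\nmid|\lambda|$ one gets $I(\lambda)=L(\lambda)$, so no non-split extensions exist (Chevalley duality handles the other $Ext$-direction), and when $p\mid|\lambda|$ the same lemma says $Ext^1$ is nonzero exactly between $\lambda$ and $\lambda\pm\alpha$. You instead sandwich $B(\lambda)$: the upper bound comes from central characters (the divided powers $\binom{e_{11}+e_{22}}{n}$, which are indeed central and separate the invariant $|\mu|$, give $B(\lambda)\subseteq HC(\lambda)\subseteq\lambda+\mathbb{Z}\alpha$; and in the case $p\nmid|\lambda|$ you sharpen this to $HC(\lambda)=\{\lambda\}$ using the generators from Theorem \ref{center}, which is in effect the paper's later proof of Lemma \ref{hcblocks} pulled forward --- legitimate, since Section 8 precedes Section 9, but note that when comparing $h_{\overline\lambda}v_\mu$ with $\delta_{\overline\lambda,\overline\mu}$ you are implicitly using $L(\mu)|_{G_r}\simeq L(\overline\mu)|_{G_r}$, i.e.\ Remark \ref{howtocompute}). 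The lower bound in the case $p\mid|\lambda|$ comes from your explicit two-dimensional Verma-type modules; the non-splitness argument is sound (a complement to $L(\lambda-\alpha)$ would force $e_{21}(1\otimes v)=0$, contradicting PBW), and integrability of these $Dist(G)$-modules is automatic here because $G_{ev}=T$. What each approach buys: yours is self-contained within the paper plus standard facts about $U_{\mathbb Z}(gl(1|1))$, at the cost of more computation; the paper's is shorter and recovers the exact $Ext^1$-quiver rather than just the block partition, but depends on an external structural result. A side effect of your organization is that the paper's concluding observation $HC(\lambda)=B(\lambda)$ becomes nearly tautological in the case $p\nmid|\lambda|$, since you prove $B(\lambda)=\{\lambda\}$ \emph{via} $HC(\lambda)=\{\lambda\}$.
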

\begin{proof}
Let $p\not||\lambda|$. If $Ext^1_G(L(\mu), L(\lambda))\neq 0$, then there is a $G$-supermodule $M$ such that
$L(\lambda)$ is a socle of $M$ and $M/L(\lambda)\simeq L(\mu)$. Thus $M$ can be embedded into an injective envelope $I(\lambda)$ of
$L(\lambda)$. By Lemma 7.1 (c) of \cite{marzub}, we infer that $I(\lambda)=L(\lambda)$. By Chevalley duality, if $Ext^1_G(L(\lambda), L(\mu)\neq 0$, then 
$Ext^1_G(L(\mu), L(\lambda))\neq 0$ and we conclude that $B(\lambda)=\{\lambda\}$.

Finally, assume $p||\lambda|$.  
If $Ext^1_G(L(\lambda), L(\mu))\neq 0$ or $Ext^1_G(L(\mu), L(\lambda))\neq 0$, then by Lemma 7.1 (c) of \cite{marzub}
we obtain that $\lambda=\mu\pm \alpha$. Conversely, if $\lambda=\mu\pm \alpha$, then 
$Ext^1_G(L(\lambda), L(\mu))\neq 0$ and $Ext^1_G(L(\mu), L(\lambda))\neq 0$ using Lemma 7.1 (c) of \cite{marzub} again.
Therefore $B(\lambda)= \lambda+\mathbb{Z}\alpha$. 
\end{proof}

For the description of the usual blocks in Schur superalgebra $S(1|1)$ see also Proposition 2.1 of \cite{marzub0}.

For every $a\in\mathbb{Z}$ write $a=pd +s$ for $1\leq s\leq p$ and define $\gamma_a=\Lambda_s-\Lambda_{s-1}-d\delta$. 
If  $\lambda=(\lambda_1, \lambda_2)$, then $\mbox{wt}(\lambda)=\gamma_{\lambda_1}-\gamma_{-\lambda_2}$.
In particular, if $\lambda_1=pd_1+s_1, -\lambda_2=pd_2+s_2$, then
\[\mbox{wt}(\lambda)=\Lambda_{s_1}-\Lambda_{s_1-1}-\Lambda_{s_2}+\Lambda_{s_2-1}-(d_1-d_2)\delta .\]
This implies that if $\mu=\lambda +pt\alpha$, where $t\in\mathbb{Z}$, then $\mbox{wt}(\lambda)=\mbox{wt}(\mu)$. 

\begin{lm}\label{pdivides}
If $p| |\lambda|$, then $K(\lambda)=\lambda +\mathbb{Z}\alpha$. If $p\not||\lambda|$, then
$K(\lambda)=\lambda +p\mathbb{Z}\alpha$. 
\end{lm}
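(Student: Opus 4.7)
The plan is to express both $\mbox{wt}(\lambda)$ and $\mbox{wt}(\mu)$ using their $p$-adic data and match them componentwise in the decomposition $P = \left( \bigoplus_{r \in \mathbb{Z}/p\mathbb{Z}} \mathbb{Z}\Lambda_r \right) \oplus \mathbb{Z}\delta$. Writing $\lambda_1 = pd_1 + s_1$, $-\lambda_2 = pd_2 + s_2$, $\mu_1 = pe_1 + t_1$, $-\mu_2 = pe_2 + t_2$ with all residues $s_i, t_i \in \{1, \ldots, p\}$, the definition of $\mbox{wt}$ gives $\delta$-coefficient $-(d_1-d_2)$ and $\Lambda$-part $\Lambda_{s_1}-\Lambda_{s_1-1}-\Lambda_{s_2}+\Lambda_{s_2-1}$ (indices in $\mathbb{Z}/p\mathbb{Z}$). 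The key initial observation is that this $\Lambda$-part vanishes precisely when $s_1 = s_2$, which is equivalent to $p \mid |\lambda|$; this dichotomy drives the two cases of the lemma.

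For the first case $p \mid |\lambda|$, one has $\mbox{wt}(\lambda) = -(|\lambda|/p)\delta$, and we argue that $\mbox{wt}(\mu) = \mbox{wt}(\lambda)$ forces simultaneously $t_1 = t_2$ (so $p \mid |\mu|$) and $|\mu| = |\lambda|$; the last equality yields $\mu_1 - \lambda_1 = -(\mu_2 - \lambda_2)$, hence $\mu \in \lambda + \mathbb{Z}\alpha$. The converse inclusion $\lambda + \mathbb{Z}\alpha \subseteq K(\lambda)$ follows from a direct check: translating by $k\alpha$ shifts both $s_1$ and $s_2$ by the same amount modulo $p$, so their coincidence persists and the $\delta$-coefficient is unchanged.

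For the second case $p \nmid |\lambda|$, the inclusion $\lambda + p\mathbb{Z}\alpha \subseteq K(\lambda)$ is already stated in the preamble to the lemma. For the reverse, the $\Lambda$-part of the equation $\mbox{wt}(\mu) = \mbox{wt}(\lambda)$ rearranges to
\[
\gamma_{s_1}^{\Lambda} + \gamma_{t_2}^{\Lambda} = \gamma_{t_1}^{\Lambda} + \gamma_{s_2}^{\Lambda},
\]
where $\gamma_a^{\Lambda} = \Lambda_a - \Lambda_{a-1}$. The only $\mathbb{Z}$-linear relation among the $\gamma_a^{\Lambda}$ for $a \in \mathbb{Z}/p\mathbb{Z}$ is the telescoping identity $\sum_a \gamma_a^{\Lambda} = 0$; since the two sides of the displayed equation both have total coefficient sum $2$, their coefficient vectors must coincide, yielding the multiset equality $\{s_1, t_2\} = \{t_1, s_2\}$ in $\mathbb{Z}/p\mathbb{Z}$. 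The alternative pairing $s_1 = s_2$ is forbidden by $p \nmid |\lambda|$, leaving $s_1 = t_1$ and $s_2 = t_2$. Combined with the $\delta$-equation $d_1 - d_2 = e_1 - e_2$, this forces $\mu_1 - \lambda_1 = p(e_1 - d_1)$ and $\mu_2 - \lambda_2 = -p(e_1 - d_1)$, so $\mu \in \lambda + p\mathbb{Z}\alpha$.

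The main technical point is the kernel computation for the assignment $a \mapsto \gamma_a^{\Lambda}$, which underlies the multiset identification and handles uniformly the potentially degenerate situations where $\{a, a-1\}$ and $\{b, b-1\}$ overlap on the circle $\mathbb{Z}/p\mathbb{Z}$ (including the case $p = 2$).
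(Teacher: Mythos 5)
Your proof is correct, and its overall strategy --- expand $\mbox{wt}$ via the $p$-adic data $\lambda_i=pd_i+s_i$, observe that the $\Lambda$-part vanishes exactly when $s_1=s_2$, i.e.\ when $p\mid|\lambda|$, and then match components in $P$ --- is the same as the paper's. The one place where you genuinely diverge is the case $p\nmid|\lambda|$: the paper assumes $s_1\neq s_1'$, reads off the identifications $\Lambda_{s_1}=\Lambda_{s_2'-1}$ and $\Lambda_{s_2-1}=\Lambda_{s_1'}$ directly, and then splits into three subcases according to whether the residues sit at the boundary value $p$ (two of which are left to the reader), each ending in a contradiction. You instead compute once and for all that the kernel of $a\mapsto\Lambda_a-\Lambda_{a-1}$ on $\mathbb{Z}^{\mathbb{Z}/p\mathbb{Z}}$ is the constants, so the relation $\gamma^{\Lambda}_{s_1}+\gamma^{\Lambda}_{t_2}=\gamma^{\Lambda}_{t_1}+\gamma^{\Lambda}_{s_2}$ forces the multiset equality $\{s_1,t_2\}=\{t_1,s_2\}$, and the pairing $s_1=s_2$ is excluded by hypothesis. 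This buys you a uniform argument with no boundary case analysis and no degenerate-overlap worries (including $p=2$), at the cost of the small extra observation about the kernel; the paper's version is more hands-on but leaves verification to the reader. Your handling of the case $p\mid|\lambda|$ (deducing $t_1=t_2$ and $|\mu|=|\lambda|$ from the vanishing of the $\Lambda$-part and the $\delta$-coefficient, plus a direct check of the converse inclusion) matches the paper's, which merely routes the converse through $B(\lambda)\subseteq K(\lambda)$ and Lemma \ref{justblocks}.
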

\begin{proof}
First observe that $p||\lambda|$ if and only if $s_1=s_2$ and if and only if $\mbox{wt}(\lambda)\in\mathbb{Z}\delta$. Therefore, if $\mu\in K(\lambda)$ and
$\mu_1=pd'_1 +s'_1, -\mu_2=pd'_2 +s'_2$, then $s'_1=s'_2$ and $d_1-d_2=d'_1 -d'_2$. Thus $\mu-\lambda\in \mathbb{Z}\alpha$ and 
$K(\lambda)\subseteq \lambda +\mathbb{Z}\alpha=B(\lambda)$. The opposite inclusion $B(\lambda)\subseteq K(\lambda)$ was already noted before.

Assume that $p\not||\lambda|$. We only need to show that $K(\lambda)\subseteq \lambda +p\mathbb{Z}\alpha$ since the opposite inclusion is obvious.
Using previous notation, this is equivalent to showing that $s_1=s'_1$, which implies $s_2=s'_2$. 

Assume that $s_1\neq s'_1$. Since $s_1\neq s_2$ and $s'_1\neq s'_2$,
the equality $\mbox{wt}(\lambda)=\mbox{wt}(\mu)$ implies that $\Lambda_{s_1}=\Lambda_{s'_2 -1}$ and 
$\Lambda_{s_2 -1}=\Lambda_{s'_1}$. 
Consequently,  $s_1\equiv s'_2-1 \pmod p$ and $s_2-1\equiv s'_1 \pmod p$.

Therefore either $s_1=p$ and $s'_2=1$, or $1\leq s_1=s'_2-1<p$, and analogously either $s'_1=p$ and $s_2=1$, or $1\leq s'_1=s_2-1<p$. 
This gives three possible cases to check. 
If $1\leq s_1=s'_2-1<p$ and $s'_1=p, s_2=1$, then 
\[\Lambda_{s_1}-\Lambda_{s_1 -1}-\Lambda_1+\Lambda_p=\Lambda_p-\Lambda_{p-1}-\Lambda_{s_1 +1}+\Lambda_{s_1},\]
which is a contradiction. The remaining two cases are left for the reader to verify.
\end{proof}
\begin{rem}\label{howtocompute}
For every $\lambda\in X(T)^+$ and $r\geq 1$ there are isomorphisms
$L(\lambda)|_{G_r}\simeq
L_r(\lambda)\simeq L_r(\overline{\lambda})\simeq L(\overline{\lambda})|_{G_r}$. Thus $z_{\lambda}=z_{\overline{\lambda}}$ for arbitrary
$z\in Z_r$. The same statement is true for $G=GL(m|n)$ and any $m$ and $n$, provided $r$ is sufficiently large $r$; say
$q=p^r > 2|\lambda_i|$ for every $1\leq i\leq m+n$.
\end{rem}
\begin{lm}\label{hcblocks}
If $p| |\lambda|$, then $HC(\lambda)=\lambda +\mathbb{Z}\alpha$. If $p\not||\lambda|$, then $HC(\lambda)=\{\lambda\}$.
\end{lm}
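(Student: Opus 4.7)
The plan is to translate the block condition into equality of scalars $a \cdot v_\lambda = a \cdot v_\mu$ for every $a \in I = \bigcup_{r \geq 1} I_r$ via Remark \ref{actionofHC}, and then apply it to the explicit generators of $I_r$ provided by Theorem \ref{harish-chandrapolynomials}. Those generators split, according to the residue $t \in \{0, \dots, q-1\}$ with $q = p^r$, into two types: a single orbit-sum $\sum_{\lambda' \in O_t}(-1)^{|\lambda'|} h_{\lambda'}$ when $p \mid t$, and the individual $h_{\lambda'}$ for $\lambda' \in O_t$ when $p \nmid t$.

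The key calculation is the eigenvalue of $h_\pi \in Dist(T_r)$ on a weight vector $v_\mu$ for arbitrary $\mu \in X(T) = X(T)^+$. Since $Dist(T_r)$ acts on $v_\mu$ through the $T_r$-character of $v_\mu$, which is the reduction $\overline{\mu} \in X^{(r)}(T)$ (equivalently, by Remark \ref{howtocompute}, $v_\mu$ is identified with the highest weight vector of $L_r(\overline{\mu})$), and $\binom{e_{ii}}{s_i}$ acts on that vector as $\binom{\overline{\mu}_i}{s_i}$, Proposition \ref{dualbases} would yield
\[
h_\pi \cdot v_\mu = (-1)^{|\pi|}\delta_{\pi,\overline{\mu}} v_\mu.
\]
Hence the orbit-sum generator acts on $v_\mu$ as the indicator $\mathbf{1}_{\overline{\mu} \in O_t}$, while $h_{\lambda'}$ acts as $(-1)^{|\lambda'|}\delta_{\lambda',\overline{\mu}}$.

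The lemma then falls out by orbit bookkeeping. If $p \nmid |\lambda|$, pick $r$ with $p^r > \max_i(|\lambda_i|,|\mu_i|)$, so $\overline{\lambda} = \lambda$, $\overline{\mu} = \mu$, and $t := |\lambda| \bmod q$ is coprime to $p$; then $h_{\lambda}$ is already a generator of $I_r$ whose nonzero eigenvalue $(-1)^{|\lambda|}$ on $v_\lambda$ forces $\mu = \lambda$. If $p \mid |\lambda|$, matching orbit-sum generators across all $r$ gives $|\mu| \equiv |\lambda| \pmod{p^r}$ for all large $r$, hence $|\mu| = |\lambda|$, i.e., $\mu \in \lambda + \mathbb{Z}\alpha$. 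Conversely, for $\mu \in \lambda + \mathbb{Z}\alpha$ the orbit-sum indicators agree since $|\mu| = |\lambda|$, while each individual $h_{\lambda'}$-generator (with $p \nmid |\lambda'|$) vanishes on both $v_\lambda$ and $v_\mu$ because $p$ divides $|\overline{\lambda}|$ and $|\overline{\mu}|$, so neither $\overline{\lambda}$ nor $\overline{\mu}$ lies in any orbit $O_{t'}$ with $p \nmid t'$.

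The main obstacle is the displayed evaluation of $h_\pi$ on $v_\mu$ for general $\mu \in X(T)$, which depends on justifying the reduction $\mu \mapsto \overline{\mu}$. This can be handled either by invoking Remark \ref{howtocompute} directly (identifying $v_\mu$ with the highest weight vector of $L_r(\overline{\mu})$), or by appealing to Lucas' theorem to reduce $\binom{\mu_i}{\beta_i} \pmod p$ to $\binom{\overline{\mu}_i}{\beta_i}$ whenever $\beta_i < p^r$. Once that point is secured, the rest is a routine combinatorial analysis of the orbit partition of $X^{(r)}(T)$ under $\lambda \mapsto \overline{\lambda + \alpha}$.
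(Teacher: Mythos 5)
Your proposal is correct and follows essentially the same route as the paper: both reduce membership in $HC(\lambda)$ to comparing the scalars by which the generators of $I_r$ from Theorem \ref{harish-chandrapolynomials} act on highest weight vectors, use the evaluation $h_\pi v_{\overline{\mu}}=(-1)^{|\pi|}\delta_{\pi,\overline{\mu}}v_{\overline{\mu}}$ together with Remark \ref{howtocompute} to pass to $\overline{\mu}\in X^{(r)}(T)$ for large $r$, and then do the same orbit bookkeeping in the two cases $p\mid|\lambda|$ and $p\nmid|\lambda|$. The only cosmetic slip is the claim $\overline{\lambda}=\lambda$ (false when some $\lambda_i<0$); what you actually need, and what your argument really uses, is that $\overline{\lambda}=\overline{\mu}$ forces $\lambda=\mu$ for $q$ large.
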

\begin{proof}
Fix a pair of weights $\mu \neq \mu' \in X(T)^+$. We will determine if they belong to the same Harish-Chandra block by comparing values of $hv_{\mu}$ and $hv_{\mu'}$ for 
generating polynomials $h\in I=\cup I_r$ listed in Theorem \ref{harish-chandrapolynomials}. 
We can assume that $|\mu|=|\mu'|=|\lambda|$, or that $\mu, \mu'\in \lambda +\mathbb{Z}\alpha$, since otherwise weights $\mu$ and $\mu'$ belong to different Harish-Chandra blocks.
Using Remark \ref{howtocompute}, it is enough to consider sufficiently large $r$.
Take $r$ large enough so that $q=p^r > |\mu_1|, |\mu_2|, |\mu'_1|, |\mu'_2|$. Then $|\mu|, |\mu'|\leq 2q-2$.

Assume that $p\not||\lambda|$. Using Theorem \ref{harish-chandrapolynomials} and the fact that 
$h_{\pi}v_{\overline{\mu}}=(-1)^{|\pi|}\delta_{\pi, \overline{\mu}}v_{\overline{\mu}}$ for each $\pi\in X^{(r)}(T)$, we see immediately that
$\mu$ and $\mu'$ belong to different Harish-Chandra blocks. Hence $HC(\mu)=\{\mu\}$.

If $p||\lambda|$, then every element $\sum_{\pi\in O_{t}}(-1)^{|\pi|} h_{\pi}$ for $1\leq t\leq q-1$  and $p|t$ attains the same value (either 0 or 1) on both $v_{\overline{\mu}}$  and $v_{\overline{\mu'}}$. Thus
$HC(\lambda)=\lambda +\mathbb{Z}\alpha$.
\end{proof}

We have shown that for $G=GL(1|1)$ we have $HC(\lambda)=B(\lambda)$ for every $\lambda$, that is Harish-Chandra polynomials determine usual blocks, analogous to the classical case of general linear groups. However, $B(\lambda)\subsetneq K(\lambda)$ if $p\not| |\lambda|$ showing that Kujawa blocks are bigger than usual blocks in this case.

\end{document}